\newtheorem{thm}{Theorem}[section]
\newtheorem{prop}[thm]{Proposition}
\newtheorem{lem}[thm]{Lemma}
\newtheorem{cor}[thm]{Corollary}
\theoremstyle{definition}
\newtheorem{defn}[thm]{Definition}
\newtheorem{exs}[thm]{Examples}
\theoremstyle{remark}
\newtheorem{rem}[thm]{Remark}
\newcommand{\Rr}{\mathbb R}
\newcommand{\Nn}{\mathbb N}
\renewcommand{\d}{\mathrm d}                           
\renewcommand{\d}{\mathrm d}               
\begin{document}
\title[Nijenhuis and compatible tensors on Lie and Courant algebroids]{Nijenhuis and compatible tensors on Lie and Courant algebroids}

\author{P. Antunes}
\address{CMUC, Department of Mathematics, University of Coimbra, 3001-454 Coimbra, Portugal}
\email{pantunes@mat.uc.pt}
\author{J. M. Nunes da Costa}
\address{CMUC, Department of Mathematics, University of Coimbra, 3001-454 Coimbra, Portugal}
\email{jmcosta@mat.uc.pt}

\begin{abstract}
We show that well known structures on Lie algebroids
can be viewed as Nijenhuis tensors or pairs of compatible tensors on Courant algebroids. We study compatibility and construct hierarchies of these structures.
\end{abstract}

\maketitle

\section*{Introduction}
\label{sec:introduction}

Pairs of tensor fields on manifolds, which are compatible in a certain sense, were studied by Magri and Morosi \cite{magrimorosi},
in view of their application to integrable hamiltonian systems. Besides Poisson-Nijenhuis manifolds -- manifolds equipped with a Poisson bivector
and a Nijenhuis $(1,1)$-tensor which are compatible in such a way that it is possible to define a hierarchy of Poisson-Nijenhuis structures on these manifolds,
 the work of Magri and Morosi also covers the study of $\Omega N$ and $P \Omega$ structures. These are pairs of tensors formed respectively, by a closed $2$-form and
  a Nijenhuis tensor ($\Omega N$) and a Poisson bivector and a closed $2$-form ($P \Omega$) satisfying suitable compatibility conditions. Another type of structure
  that can be considered on a manifold is a Hitchin pair. It is a pair formed by a symplectic form and a $(1,1)$-tensor that was introduced by Crainic
  \cite{crainic} in relation with generalized complex geometry. All
  these structures, defined by pairs of tensors, were studied in the Lie
   algebroid setting by Kosmann-Schwarzbach and Rubtsov \cite{YKSRubtsov} and by one of the authors
  \cite{antunes2}. Finally, we mention
   complementary forms on Lie algebroids, which were defined by Vaisman \cite{vaisman} and also considered in \cite{YKSRubtsov} and
   \cite{antunes2}, and that can be viewed as Poisson structures on the dual Lie algebroid.

The aim of  the present paper is to show that all the structures referred to above, although they have different nature on Lie algebroids, once carried over to
 Courant algebroids, are all of the same type: they are Nijenhuis tensors. In this way,
 we obtain a unified theory of Nijenhuis structures on Courant algebroids. In order to include Poisson quasi-Nijenhuis structures with background in this unified theory, we consider
 a stronger version of this notion, which we call {\em exact}
 Poisson quasi-Nijenhuis structure with background.
 This seems to be the natural definition, at least in this context.

We show that the structures  defined by pairs of tensors on a Lie algebroid can also be characterized using the  notion of compatible pair of tensors on a Courant
 algebroid, introduced in \cite{anutuneslaurentndc}.

 An important tool in this work is the Nijenhuis concomitant of two $(1,1)$-tensors on
a Courant algebroid. It was originally
 defined for manifolds by Nijenhuis in \cite{nijenhuis}  and then extended to the Courant algebroid framework in \cite{stienon} and in \cite{anutuneslaurentndc}.
We use the Nijenhuis concomitant to study the compatibility of structures from the usual point of view, i.e., we say that two structures
 of the same type are compatible if their sum is still a structure of the same type. Thus, we can talk about compatible Poisson-Nijenhuis, $\Omega N$ and
$P \Omega$ structures, as well as compatible complementary forms and compatible Hitchin pairs.

The extension to Lie algebroids of the Magri-Morosi hierarchies of Poisson-Nijenhuis structures on manifolds, was done in \cite{magriYKS}.
As it happens in the case of manifolds, the hierarchies on Lie algebroids are constructed through deformations by Nijenhuis tensors.
In this paper we construct similar hierarchies of $\Omega N$ and $P \Omega$ structures on Lie algebroids, and their deformations, and also hierarchies
of complementary forms.
 Elements of these hierarchies provide examples of compatible structures in the sense
 described above.

 Our computations widely use the {\em big bracket} -- the Poisson bracket induced by the symplectic structure on the cotangent bundle of a supermanifold.  The Courant algebroids that we shall consider in this paper are doubles $(A\oplus A^*,\Theta)$
  of
 protobialgebroids structures on $(A,A^*)$ \cite{YKS05}, in the simpler cases where $\Theta$ is a function that determines a Lie algebroid structure on $A$, or on $A^*$,
 sometimes in the presence of a background (a closed $3$-form on $A$).

The paper is organized as follows. Section 1 contains a short review of Courant and Lie algebroids in the supergeometric framework while, in section 2, we
recall the notion of Nijenhuis tensor on a Courant algebroid and of Nijenhuis concomitant of two tensors. In section 3, we characterize Poisson bivectors
 and closed $2$-forms on a Lie algebroid $(A,\mu)$ as
Nijenhuis tensors on the Courant algebroid $(A \oplus A^*, \mu)$. In section 4,
we show how Poisson-Nijenhuis,
$\Omega N$ and $P \Omega$ structures  and also Hitchin pairs on a Lie algebroid $(A,\mu)$ can be seen either as Nijenhuis tensors or compatible pairs
 of tensors
on the Courant algebroid $(A \oplus A^*, \mu)$.
Considering, in section 5, the Courant algebroid with background $(A \oplus A^*, \mu + H)$, we see exact Poisson
quasi-Nijenhuis structures with background as Nijenhuis tensors on this Courant algebroid, recovering a result in \cite{antunes1}.
For Poisson
quasi-Nijenhuis structures (without background) a special case where two $3$-forms involved are exact is also considered.
The case of complementary forms is  treated in section 6. Section 7
is devoted to the compatibility of structures on a Lie algebroid, defined by pairs of tensors.
Sections 8, 9 and 10 treat the problem of defining hierarchies of structures on Lie algebroids. We start by showing, in section 8, that when a pair of tensors defines a certain structure on a Lie algebroid,
the same pair of tensors defines a structure of the same kind for a whole hierarchy of deformed Lie algebroids. Then, in section 9, we construct hierarchies
 of structures defined by pairs of tensors and lastly, in section 10, we show that within one hierarchy, all the elements are pairwise compatible.

We recall that if one relaxes the Jacobi identity in the definition of a Lie (respectively, Courant) algebroid we obtain what is called a {\em pre-Lie}
 (respectively, {\em pre-Courant}) algebroid.
The proof of our results does not use the Jacobi identity of the bracket, whether if it is a Lie or a Courant algebroid bracket. Therefore, they also hold in the more general settings of pre-Lie and pre-Courant algebroids, respectively.

\section{Courant and Lie algebroids in supergeometric terms}  \label{subsection:1.1}
We begin this section by introducing the supergeometric setting, following the same approach as in \cite{voronov, roy}. Given a vector bundle $A \to M$, we denote by $A[n]$ the graded manifold obtained by shifting the fibre degree by $n$. The graded manifold $T^*[2]A[1]$ is equipped with a canonical symplectic structure which induces a Poisson bracket on its algebra of functions $\mathcal{F}:=C^\infty(T^*[2]A[1])$. This Poisson bracket is sometimes called the \emph{big bracket} (see \cite{YKS05}).

Let us describe locally the Poisson bracket of the algebra $\mathcal{F}$. Fix local coordinates $x^i, p_i,\xi^a, \theta_a$, $i \in \{1,\dots,n\}, a \in \{1,\dots,d\}$, in $T^*[2]A[1]$, where $x^i,\xi^a$ are local coordinates on $A[1]$ and $p_i, \theta_a$ are their associated moment coordinates. In these local coordinates, the Poisson bracket is given by
 $$ \{p_i,x^i\}=\{\theta_a,\xi^a\}=1,  \quad  i =1, \dots, n, \, \, a=1, \dots , d, $$
while all the remaining brackets vanish.

The Poisson algebra of functions $\mathcal{F}$ is endowed with a $(\Nn \times \Nn)$-valued bidegree. We define this bidegree locally but it is well defined globally (see \cite{voronov, roy} for more details). The bidegrees are locally set as follows: the coordinates on the base manifold $M$, $x^i$, $i \in \{1,\dots,n\}$, have bidegree $(0,0)$, while the coordinates on the fibres, $\xi^a$, $a \in \{1,\dots,d\}$, have bidegree $(0,1)$ and their associated moment coordinates, $p_i$ and $\theta_a$, have bidegrees $(1,1)$ and $(1,0)$, respectively.
The algebra of functions $\mathcal{F}$ inherits this bidegree and we set
$$   \mathcal{F}=\bigoplus_{(k,l) \in \Nn \times \Nn} \mathcal{F}^{k,l}, $$
where $\mathcal{F}^{k,l}$ is the $C^\infty(M)$-module of functions of bidegree $(k,l)$. The \emph{total degree}
 of a function $f \in \mathcal{F}^{k,l}$ is equal to $k+l$ and the subset of functions of total degree $t$ is noted $\mathcal{F}^t$.
We can verify that the big bracket has bidegree $(-1,-1)$, i.e.,
$$\{\mathcal{F}^{k_1,l_1},\mathcal{F}^{k_2,l_2}\}\subset \mathcal{F}^{k_1+k_2-1,l_1+l_2-1},$$
and consequently, its total degree is $-2$. Thus, the big bracket on functions of lowest degrees,
$\{\mathcal{F}^0,\mathcal{F}^0\}$ and $\{\mathcal{F}^0,\mathcal{F}^1\}$, vanish. For $\mathcal{X}, \mathcal{Y} \in \mathcal{F}^1= \Gamma(A \oplus A^*)$,
 $\{\mathcal{X}, \mathcal{Y}\}$ is an element of $\mathcal{F}^0=C^\infty(M)$  and is given by
$$\{\mathcal{X}, \mathcal{Y}\}=\langle\mathcal{X}, \mathcal{Y}\rangle,$$
where $\langle .,.\rangle$ is the canonical fiberwise symmetric bilinear form on $A\oplus A^*$.

\

Let us recall that a \emph{Courant} structure on a vector bundle $E \to M$ equipped with a fibrewise non-degenerate symmetric bilinear form $\langle.,.\rangle$ is a pair $(\rho, [.,.])$, where the \emph{anchor} $\rho$ is a bundle map from $E$ to $TM$ and the \emph{Dorfman bracket} $[.,.]$ is a $\mathbb{R}$-bilinear (not necessarily skew-symmetric) map on $\Gamma(E)$ satisfying
\begin{eqnarray}
& \rho(X)\cdot\langle Y,Z\rangle=\langle[X,Y],Z\rangle +  \langle Y,[X,Z]\rangle, \label{pre_Courant1}\\
&\rho(X)\cdot \langle Y,Z\rangle=\langle X, [Y,Z]+ [Z,Y]\rangle, \label{pre_Courant2}\\
& [X, [Y,Z]]=[[X,Y],Z]+ [Y,[X,Z]], \label{Jacobi_id}
\end{eqnarray}
 for all $X,Y,Z \in \Gamma(E)$.
From (\ref{pre_Courant1}) and (\ref{pre_Courant2}), we get \cite{YKS05}
$$[X, fY]=f[X,Y]+ (\rho(X).f)Y,$$ for all $X,Y \in \Gamma(E)$ and $f \in C^\infty(M)$.

In this paper we are only interested in exact Courant algebroids.
Although many of the properties and results we recall next hold in the general case, we shall consider the case where
the vector bundle $E$ is the Whitney sum of a vector bundle $A$ and its dual, i.e., $E=A \oplus A^*$,
and $\langle .,.\rangle$ is the canonical fiberwise symmetric bilinear form. So, from now on, all the Courant structures will be defined on
$(A \oplus A^*,\langle.,.\rangle)$.

From \cite{roy} we know that
there is a one-to-one correspondence between Courant structures on \mbox{$(A \oplus A^*,\langle.,.\rangle)$}
and functions $\Theta \in \mathcal{F}^3$ such that \mbox{$\{\Theta,\Theta\}=0$}.
The anchor and Dorfman bracket associated to a given $\Theta\in \mathcal{F}^3$ are defined, for all $\mathcal{X},\mathcal{Y} \in \Gamma(A \oplus A^*)$ and
$f \in C^\infty(M)$, by the derived bracket expressions
$$\rho(\mathcal{X})\cdot f=\{\{\mathcal{X},\Theta\},f\} \quad {\hbox{and}} \quad {[\mathcal{X},\mathcal{Y}]=\{\{\mathcal{X},\Theta\},\mathcal{Y}\}}.$$
For simplicity, we shall denote a Courant algebroid by the pair $(A \oplus A^*, \Theta)$ instead of the triple $(A \oplus A^*, \langle.,. \rangle, \Theta)$.

A Courant structure $\Theta \in \mathcal{F}^3$ can be decomposed using the bidegrees:
\begin{equation} \label{Theta}
\Theta=\mu + \gamma + \phi + \psi,
\end{equation}
with $\mu \in \mathcal{F}^{1,2}, \gamma \in \mathcal{F}^{2,1}, \phi \in \mathcal{F}^{0,3}=\Gamma(\wedge^3 A^*)$ and $\psi \in \mathcal{F}^{3,0}=\Gamma(\wedge^3 A)$.
We recall from~\cite{roy} that, when $\gamma = \phi = \psi =0$, $\Theta$ is a Courant structure on $A \oplus A^*$ if and only if $(A,\mu)$ is a Lie algebroid.
 The anchor and the bracket of the Lie algebroid are defined, respectively by
$$\rho(X)\cdot f=\{\{X,\mu \},f\} \quad {\hbox{and}} \quad {[X,Y]_\mu=\{\{X,\mu \},Y\}},$$
for all $X,Y \in \Gamma(A)$ and
$f \in C^\infty(M)$, while the Lie algebroid differential is given by
$$d_\mu= \{ \mu, . \}.$$

A function  $\Theta \in \mathcal{F}^3$ given by (\ref{Theta}) with $\phi = \psi =0$ is a Courant structure on $A \oplus A^*$ if and only if $\left((A,\mu),(A^*,\gamma)\right)$ is a Lie bialgebroid \cite{roy}.


\section{Nijenhuis concomitant of two tensors}

Let $(A \oplus A^*, \Theta)$ be a Courant algebroid and $I$  a vector bundle endomorphism of $A \oplus A^*$, $I:A \oplus A^* \to A \oplus A^*$. If
$\langle Iu,v \rangle + \langle u,Iv \rangle =0,$
for all $u,v \in A \oplus A^*$,  $I$ is said to be {\em skew-symmetric}.
Vector bundle endomorphisms of $A \oplus A^*$  will be seen as $(1,1)$-tensors on $A \oplus A^*$.

The deformation of the Dorfman bracket $[.,.]$ by a $(1,1)$-tensor $I: A \oplus A^* \to A \oplus A^*$ is the bracket $[.,.]_I$ defined, for all $\mathcal{X},\mathcal{Y} \in \Gamma(A \oplus A^*)$, by
$$[\mathcal{X},\mathcal{Y}]_I =[I \mathcal{X},\mathcal{Y}]+[\mathcal{X},I \mathcal{Y}]-I[\mathcal{X},\mathcal{Y}].$$

When $I$ is skew-symmetric, the deformed structure $(\rho \circ I, [.,.]_I)$ is given, in supergeometric terms, by $\Theta_{I}:=\{I,\Theta\}\in\mathcal{F}^{3}$.
The deformation of $\Theta_I$ by the skew-symmetric $(1,1)$-tensor $J$ is denoted by
$\Theta_{I,J}$, i.e., $\Theta_{I,J}=\{J,\{I, \Theta \}\},$ while
the deformed Dorfman bracket associated to $\Theta_{I,J}$  is denoted by $[.,.]_{I,J}$.

Recall that a vector bundle endomorphism $I:A \oplus A^* \to A \oplus A^*$ is a {\em Nijenhuis} tensor on the Courant algebroid $(A \oplus A^*, \Theta)$ if
its torsion vanishes. The {\em torsion} ${\mathcal T}_{\Theta}I$ is defined, for all $\mathcal{X},\mathcal{Y} \in \Gamma (A \oplus A^*)$, by

\begin{equation*} \label{torsion}
{\mathcal T}_{\Theta}I (\mathcal{X},\mathcal{Y})= [I\mathcal{X},I\mathcal{Y}]-I[\mathcal{X},\mathcal{Y}]_{I}
\end{equation*}
or, equivalently, by
\begin{equation} \label{second_def_torsion}
{\mathcal T}_{\Theta}I (\mathcal{X},\mathcal{Y})=\frac{1}{2}\big([\mathcal{X},\mathcal{Y}]_{I,I}-[\mathcal{X},\mathcal{Y}]_{I^2}\big),
\end{equation}
where $I^2=I \circ I$.
When $I^2= \lambda\, id_{A \oplus A^*}$, for some $\lambda \in \Rr$, (\ref{second_def_torsion}) is given, in supergeometric terms, by
\begin{equation} \label{supergeometric_torsion}
{\mathcal T}_{\Theta}I= \frac{1}{2}(\Theta_{I,I}-\lambda \Theta)
\end{equation}
(see \cite{grab}).

The notion of {\em Nijenhuis concomitant} of two tensor fields of type $(1,1)$ on a manifold was introduced in \cite{nijenhuis}. In the case of $(1,1)$-tensors $I$ and $J$ on a Courant algebroid $(A \oplus A^*,\Theta)$,
  the {\em Nijenhuis concomitant} of $I$
and $J$ is the map ${\mathcal{N}}_\Theta (I,J): \Gamma(A \oplus A^*) \times \Gamma (A \oplus A^*) \to \Gamma (A \oplus A^*) $  (in general not a tensor) defined, for all sections $\mathcal{X}$ and $\mathcal{Y}$ of $A \oplus A^*$, as follows:
\begin{eqnarray} \label{Nijenhuisconcomitant}
{\mathcal{N}}_\Theta (I,J)(\mathcal{X},\mathcal{Y})&=& [I\mathcal{X},J\mathcal{Y}]-I[\mathcal{X},J\mathcal{Y}]-I[J\mathcal{X},\mathcal{Y}]+I\circ J[\mathcal{X},\mathcal{Y}] \nonumber \\
& &+[J\mathcal{X},I\mathcal{Y}]-J[\mathcal{X},I\mathcal{Y}]-J[I\mathcal{X},\mathcal{Y}]+J\circ I[\mathcal{X},\mathcal{Y}],
\end{eqnarray}
where  $[. , .]$ is the Dorfman bracket corresponding to $\Theta$. Equivalently,
\begin{equation} \label{Nijenhuisconcomitant2}
{\mathcal{N}}_\Theta (I,J)(\mathcal{X},\mathcal{Y})= \frac{1}{2} \big( [\mathcal{X},\mathcal{Y}]_{I,J} + [\mathcal{X},\mathcal{Y}]_{J,I} - [\mathcal{X},\mathcal{Y}]_{I\circ J}- [\mathcal{X},\mathcal{Y}]_{J\circ I}\big).
\end{equation}
Notice that
\begin{equation} \label{N_and_T}
{\mathcal N}_\Theta(I,I)=2 {\mathcal T}_\Theta I
 \end{equation}
 while if $I$ and $J$ anti-commute, i.e., $I\circ J =-J \circ I$, then
 \begin{equation} \label{N_and_T_anticommute}
 {\mathcal N}_\Theta(I,J)(\mathcal{X},\mathcal{Y})=\frac{1}{2}( [\mathcal{X},\mathcal{Y}]_{I,J} + [\mathcal{X},\mathcal{Y}]_{J,I}).
 \end{equation}

 For any $(1,1)$-tensors $I$ and $J$ on $(A \oplus A^*,\Theta)$, we have \cite{anutuneslaurentndc}
 \begin{equation} \label{torsion_sum}
 {\mathcal T}_\Theta(I+J)={\mathcal T}_\Theta I + {\mathcal T}_\Theta J + {\mathcal N}_\Theta(I,J).
 \end{equation}

The concomitant $C_\Theta(I,J)$ of two skew-symmetric $(1,1)$-tensors $I$ and $J$ on a Courant algebroid $(A \oplus A^*,\Theta)$ is given by \cite{anutuneslaurentndc}:
\begin{equation}  \label{def_conc}
C_\Theta(I,J)=\Theta _{I,J}+\Theta_{J,I}.
\end{equation}
In other words,
\begin{equation} \label{def_conc1}
C_\Theta(I,J)(\mathcal{X},\mathcal{Y})= [\mathcal{X},\mathcal{Y}]_{I,J}+ [\mathcal{X},\mathcal{Y}]_{J,I}
 \end{equation}
for all $\mathcal{X},\mathcal{Y} \in \Gamma(A \oplus A^*)$. Combining (\ref{N_and_T_anticommute}) and (\ref{def_conc1})  we find that, in the case where $I$ and $J$ anti-commute,
\begin{equation}  \label{N_concomitant}
{\mathcal{N}}_\Theta (I,J)(\mathcal{X},\mathcal{Y})= \frac{1}{2} C_\Theta(I,J)(\mathcal{X},\mathcal{Y}),
  \end{equation}
for all  $\mathcal{X} , \mathcal{Y} \in \Gamma(A \oplus A^*)$.

\

The notion of Nijenhuis concomitant of two $(1,1)$-tensors on a Lie algebroid can also be considered. If $(A,\mu)$ is a Lie algebroid and $I,J$ are $(1,1)$-tensors
 on $A$,
${\mathcal{N}}_\mu (I,J)$ is given by (\ref{Nijenhuisconcomitant}), adapted in the obvious way. Equations (\ref{Nijenhuisconcomitant2}), (\ref{N_and_T}),
(\ref{N_and_T_anticommute}) and (\ref{N_concomitant}) also hold in the Lie algebroid case.

As in the case of Courant algebroids, for a Lie algebroid $(A, \mu)$, we use the following notation: $\mu_I= \{I, \mu \}$, if $I$ is either a bivector, a $2$-form
or a $(1,1)$-tensor on $A$.


\section{Tensors on Lie algebroids}

Let $(A,\mu)$ be a Lie algebroid and consider a $(1,1)$-tensor $N$, a bivector $\pi$ and a $2$-form $\omega$ on $A$. Associated with
 $N$, $id:= id_A$, $id^*\!:= id_{A^{*}}$, $\pi$ and $\omega$, we consider the skew-symmetric $(1,1)$-tensors on  $A\oplus A^*$, $J_N$, $J_{id}$, $J_\omega$ and $J_\pi$ given,
 in matrix form, respectively by

\

\begin{center}
$J_N= \left(
\begin{array}{cc}
N & 0\\
0 & -N^*
\end{array}
\right),
\hspace{1cm}
 J_{id}= \left(
\begin{array}{cc}
id & 0\\
0 & -id^*
\end{array}
\right)$,
\end{center}

\begin{center}
$J_{\omega}= \left(
\begin{array}{cc}
0 & 0\\
\omega^\flat & 0
\end{array}
\right)
\hspace{.7cm} \textrm{and} \hspace{.7cm}
J_{\pi}= \left(
\begin{array}{cc}
0 & \pi^\#\\
0 & 0
\end{array}
\right).$
\end{center}

\

In all the computations using the big bracket, instead of writing $J_N$, $J_{id}$, $J_{\omega}$ and $J_{\pi}$, we simply write $N$, $id$, $\omega$ and $\pi$. We use the  $(1,1)$-tensors on  $A\oplus A^*$ above to express the properties of
$N$ being Nijenhuis, $\pi$ Poisson and $\omega$ closed on the Lie algebroid $(A,\mu)$.

\begin{prop}[\cite{YKS11}]  \label{Nijenhuis}
Let $N$ be a $(1,1)$-tensor on $(A,\mu)$ such that $N^2= \lambda \, id_A$, for some $\lambda \in \Rr$. Then,  $N$ is a Nijenhuis tensor on the Lie algebroid $(A,\mu)$ if and only if  $J_N$ is a Nijenhuis tensor on the
 Courant algebroid $(A \oplus A^*, \mu)$.
\end{prop}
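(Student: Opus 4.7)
The plan is to translate both Nijenhuis conditions into the big-bracket language and observe that they reduce to the vanishing of the same function in $\mathcal{F}^{1,2}$.

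First I would verify the hypothesis needed to apply (\ref{supergeometric_torsion}) on the Courant side, namely $J_N^2 = \lambda\, id_{A \oplus A^*}$. This follows immediately from the block-diagonal matrix form of $J_N$: one has $J_N^2 = \mathrm{diag}(N^2, (-N^*)^2) = \mathrm{diag}(\lambda\, id_A, \lambda\, id_{A^*})$. Formula (\ref{supergeometric_torsion}) then gives
$$\mathcal{T}_\mu J_N = \tfrac{1}{2}\bigl(\mu_{J_N, J_N} - \lambda\, \mu\bigr),$$
which lies in $\mathcal{F}^{1,2}$ since $J_N \in \mathcal{F}^{1,1}$ and $\mu \in \mathcal{F}^{1,2}$.

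Next I would exploit the identification — made explicit in the paragraph preceding the statement — that in big-bracket computations $J_N$ is written simply as $N$: both share the same $\mathcal{F}^{1,1}$ representative (locally, $N^a_b\, \xi^b\, \theta_a$). Therefore $\{J_N, \mu\} = \{N, \mu\} = \mu_N$ and, by iteration, $\mu_{J_N, J_N} = \mu_{N, N}$. The same derivation that leads from (\ref{second_def_torsion}) to (\ref{supergeometric_torsion}) — using $[\cdot, \cdot]_{\lambda\, id} = \lambda[\cdot, \cdot]$ — carries over verbatim on the Lie algebroid $(A, \mu)$ and yields $\mathcal{T}_\mu N = \tfrac{1}{2}(\mu_{N, N} - \lambda\, \mu)$. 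Combining this with the previous display gives
$$\mathcal{T}_\mu J_N = \mathcal{T}_\mu N \quad \text{in } \mathcal{F}^{1,2},$$
so one torsion vanishes if and only if the other does.

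The whole argument hinges on recognizing that skew-symmetrizing $N$ into $J_N$ produces precisely the object captured by the bidegree-$(1,1)$ slot of $\mathcal{F}$, which is what makes the two deformations $\mu_{J_N, J_N}$ and $\mu_{N, N}$ coincide. There is no genuine computational obstacle beyond the trivial matrix multiplication for $J_N^2$; the conceptual content is entirely carried by the big-bracket dictionary and by formula (\ref{supergeometric_torsion}).
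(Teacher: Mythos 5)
Your proposal is correct and follows essentially the same route as the paper: check $J_N^2=\lambda\,id_{A\oplus A^*}$, invoke formula (\ref{supergeometric_torsion}) with $\Theta=\mu$, and use the big-bracket identification of $J_N$ with $N$ to see that the resulting element $\tfrac12(\mu_{N,N}-\lambda\mu)$ of $\mathcal{F}^{1,2}$ represents both torsions. The paper's proof is just a terser statement of exactly this argument.
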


\begin{proof}
The assumption $N^2= \lambda \, id_A$ is equivalent to $J_{N}^2= \lambda \, id_{A \oplus A^*}$. In this case, the torsion of $J_N$ on $(A \oplus A^*, \mu)$
is given by (\ref{supergeometric_torsion}),
 with $\Theta=\mu$, and coincides with the torsion of $N$ on $(A, \mu)$.
\end{proof}

Let $I_\omega$ be the $(1,1)$-tensor on  $A\oplus A^*$, defined by $$I_\omega= J_\omega + J_{id}=\left(
\begin{array}{cc}
id & 0\\
\omega^\flat & -id^*
\end{array}
\right).$$

\begin{prop}  \label{mu_closed}
The 2-form $\omega$ is closed on $(A,\mu)$ if and only if  $I_{\omega}$ is a
 Nijenhuis tensor on the Courant algebroid $(A \oplus A^*, \mu)$.
\end{prop}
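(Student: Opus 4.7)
The plan is to reduce the statement to a short computation in the big bracket by first observing that $I_\omega$ is involutive, so that formula~(\ref{supergeometric_torsion}) applies with $\lambda=1$. A direct matrix multiplication,
\[
I_\omega^2 \;=\; \begin{pmatrix} id_A & 0 \\ \omega^\flat & -id_{A^*}\end{pmatrix}^{\!2} \;=\; \begin{pmatrix} id_A & 0 \\ \omega^\flat\circ id_A - id_{A^*}\circ \omega^\flat & id_{A^*}\end{pmatrix} \;=\; id_{A\oplus A^*},
\]
together with the skew-symmetry of $I_\omega$ (immediate from $\omega(X,Y)+\omega(Y,X)=0$), puts us in the hypothesis of (\ref{supergeometric_torsion}) and yields
\[
\mathcal{T}_\mu I_\omega \;=\; \tfrac{1}{2}\bigl(\mu_{I_\omega,I_\omega}-\mu\bigr) \;=\; \tfrac{1}{2}\bigl(\{I_\omega,\{I_\omega,\mu\}\}-\mu\bigr).
\]

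Next I would write $I_\omega=J_{id}+J_\omega$ and expand $\{I_\omega,\{I_\omega,\mu\}\}$ into four iterated brackets using the Leibniz rule. The required ingredients are: $\{J_{id},\mu\}=\mu$, because $\{J_{id},\,\cdot\,\}$ acts on a function of bidegree $(k,\ell)$ by the scalar $\ell-k$ and $\mu\in\mathcal{F}^{1,2}$; $\{J_\omega,\mu\}=\{\omega,\mu\}=d_\mu\omega$ by the very definition of the Lie algebroid differential; $\{J_{id},d_\mu\omega\}=3\,d_\mu\omega$, since $d_\mu\omega\in\mathcal{F}^{0,3}$; and $\{J_\omega,d_\mu\omega\}=0$ for bidegree reasons, as the result would land in $\mathcal{F}^{-1,4}=\{0\}$. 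Adding up these contributions gives $\{I_\omega,\{I_\omega,\mu\}\}=\mu+4\,d_\mu\omega$, whence
\[
\mathcal{T}_\mu I_\omega \;=\; 2\,d_\mu\omega,
\]
and the equivalence drops out: $I_\omega$ is Nijenhuis on $(A\oplus A^*,\mu)$ if and only if $d_\mu\omega=0$, i.e.\ if and only if $\omega$ is closed on $(A,\mu)$.

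An alternative route, parallel in spirit to Proposition~\ref{Nijenhuis}, is to apply the torsion sum formula~(\ref{torsion_sum}) to $I_\omega=J_{id}+J_\omega$. Both summands are Nijenhuis (for $J_\omega$ because $J_\omega^2=0$ and the iterated bracket vanishes for bidegree reasons; for $J_{id}$ by a short direct check using $\{J_{id},\mu\}=\mu$), so the full torsion reduces to the Nijenhuis concomitant $\mathcal{N}_\mu(J_{id},J_\omega)$; since $J_{id}$ and $J_\omega$ anti-commute, (\ref{N_and_T_anticommute}) recovers the same answer $2\,d_\mu\omega$. I expect no real obstacle; the only point to keep in mind is the bidegree-counting behaviour of $\{J_{id},\,\cdot\,\}$, after which the proof is essentially bidegree bookkeeping in the big bracket.
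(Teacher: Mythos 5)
Your proposal is correct and follows essentially the same route as the paper: verify $I_\omega^2=id_{A\oplus A^*}$, apply formula~(\ref{supergeometric_torsion}) with $\lambda=1$, expand $\{I_\omega,\{I_\omega,\mu\}\}$ using the bidegree action of $\{id,\cdot\}$, and conclude $\mathcal{T}_\mu I_\omega=2\,d_\mu\omega$, exactly as in the paper's computation. The explicit check of skew-symmetry and the alternative argument via~(\ref{torsion_sum}) are fine additions but do not change the substance.
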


\begin{proof}
First, observe that $I_{\omega}^2=id_{A \oplus A^*}$. According to (\ref{supergeometric_torsion}),
we have
\begin{eqnarray*}
{\mathcal{T}}_\mu I_{\omega}& = & \frac{1}{2} \left(\{\omega + id, \{ \omega + id, \mu \} \} - \mu \right) \\
& = &  \frac{1}{2}  \left( \{\omega, \{id, \mu \} \} +\{id, \{ \omega, \mu \} \} + \{id, \{id, \mu \} \}- \mu \right)\\
& = & 2 \{\omega, \mu \},
\end{eqnarray*}
where we used, in the last equality, the formula
\begin{equation*}
\{id, u \} = (q-p)u,
\end{equation*}
for all $u \in {\mathcal F}^{(p,q)}$ \cite{YKSRubtsov}. Thus, $\omega$ is closed if and only if ${\mathcal{T}}_\mu I_{\omega}=0$.
\end{proof}

Recall that a bivector field $\pi$ on $A$ is a Poisson tensor on $(A,\mu)$ if $\mu_{\pi, \pi}=\{\pi, \{\pi, \mu \} \}=0$ or, equivalently, $[\pi, \pi ]_\mu=0$.

\begin{prop} [\cite{anutuneslaurentndc}] \label{pi_Nijenhuis}
The bivector $\pi$ is a Poisson tensor on $(A,\mu)$ if and only if  $J_{\pi}$ is a Nijenhuis tensor on the Courant algebroid $(A \oplus A^*, \mu)$.
\end{prop}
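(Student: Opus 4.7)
The proof should closely mirror the one just given for Proposition~3.2 (the closed $2$-form case), and should be short. The plan is to verify the hypothesis of formula~(\ref{supergeometric_torsion}), apply it to $J_\pi$, and then recognize that the resulting expression in big-bracket language is precisely the Poisson condition.

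First I would observe that $J_\pi^{\,2}=0$. This is immediate from the matrix form: the block product
\[
\begin{pmatrix}0 & \pi^\#\\ 0 & 0\end{pmatrix}\begin{pmatrix}0 & \pi^\#\\ 0 & 0\end{pmatrix}
\]
vanishes. Hence $J_\pi$ satisfies $J_\pi^{\,2}=\lambda\,id_{A\oplus A^*}$ with $\lambda=0$, so formula~(\ref{supergeometric_torsion}) applies on the Courant algebroid $(A\oplus A^*,\mu)$ and gives
\[
\mathcal{T}_\mu J_\pi=\tfrac{1}{2}\,\mu_{J_\pi,J_\pi}.
\]

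Next I would pass to the big-bracket notation. Following the convention fixed right after the definitions of $J_N,J_{id},J_\omega,J_\pi$ (namely, writing $\pi$ in place of $J_\pi$ inside big-bracket computations, since $\pi\in\mathcal{F}^{2,0}=\Gamma(\wedge^2 A)$ corresponds to $J_\pi$), we get
\[
\mathcal{T}_\mu J_\pi=\tfrac{1}{2}\{\pi,\{\pi,\mu\}\}.
\]
Therefore $J_\pi$ is a Nijenhuis tensor on $(A\oplus A^*,\mu)$ if and only if $\{\pi,\{\pi,\mu\}\}=0$, which is exactly the condition $[\pi,\pi]_\mu=0$ recalled just before the statement, i.e.\ the condition that $\pi$ is a Poisson tensor on $(A,\mu)$.

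There is essentially no obstacle here: everything follows once one notices that $J_\pi$ is nilpotent, so that (\ref{supergeometric_torsion}) is available. The only thing worth being careful about is the convention that the matrix tensor $J_\pi$ and the big-bracket function $\pi$ are identified inside $\{\,\cdot\,,\,\cdot\,\}$; this is precisely the convention stated in the paper, so no additional justification is required.
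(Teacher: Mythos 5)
Your proof is correct and follows exactly the paper's own argument: note $J_\pi^2=0$, apply formula~(\ref{supergeometric_torsion}) with $\lambda=0$ to get $\mathcal{T}_\mu J_\pi=\tfrac{1}{2}\{\pi,\{\pi,\mu\}\}$, and identify the vanishing of this expression with the Poisson condition $[\pi,\pi]_\mu=0$. No discrepancies to report.
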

\begin{proof}
We have $J_{\pi}^2=0$ and, from (\ref{supergeometric_torsion}),
we get ${\mathcal{T}}_\mu J_{\pi}= \frac{1}{2} \{ \pi, \{\pi, \mu \} \}$.
\end{proof}

Notice that the $(1,1)$-tensors $J_\pi$ and $J_{id}$ anti-commute. Thus, from (\ref{N_concomitant}), we have
$$2 {\mathcal{N}}_\mu (J_\pi, J_{id})=C_\mu(\pi, id)=\{\pi, \{id , \mu \} \}+ \{id, \{ \pi, \mu \} \}= \mu_\pi - \mu_\pi=0.$$
Denoting by $I_\pi$ the $(1,1)$-tensor on  $A\oplus A^*$ defined by $$I_\pi= J_\pi + J_{id}=\left(
\begin{array}{cc}
id & \pi^\#\\
0 & -id^*
\end{array}
\right),$$ and taking into account the fact that
$${\mathcal{N}}_\mu (I_\pi, I_\pi)={\mathcal{N}}_\mu (J_\pi,J_\pi) + 2 {\mathcal{N}}_\mu (J_\pi,J_{id})+ {\mathcal{N}}_\mu
(J_{id}, J_{id})={\mathcal{N}}_\mu (J_\pi,J_\pi),$$
Proposition \ref{pi_Nijenhuis} admits the following equivalent formulation:

\begin{prop}
The bivector $\pi$ is a Poisson tensor on $(A,\mu)$ if and only if  $I_{\pi}$
 is a Nijenhuis tensor on the Courant algebroid $(A \oplus A^*, \mu)$.
\end{prop}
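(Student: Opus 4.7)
The plan is to reduce everything to Proposition~\ref{pi_Nijenhuis}. Since $I_\pi = J_\pi + J_{id}$, formula~(\ref{torsion_sum}) gives
$$\mathcal{T}_\mu I_\pi = \mathcal{T}_\mu J_\pi + \mathcal{T}_\mu J_{id} + \mathcal{N}_\mu(J_\pi, J_{id}).$$
The third term on the right vanishes by the computation displayed in the paragraph immediately preceding the statement: $J_\pi$ and $J_{id}$ anti-commute, so by (\ref{N_concomitant}) the concomitant $\mathcal{N}_\mu(J_\pi, J_{id})$ equals $\tfrac{1}{2}C_\mu(\pi, id) = \tfrac{1}{2}(\{\pi,\{id,\mu\}\} + \{id,\{\pi,\mu\}\})$, and the two terms cancel via the bidegree formula $\{id, u\} = (q-p)u$ applied to $\mu$ and to $\mu_\pi$.

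Next I would show that $\mathcal{T}_\mu J_{id} = 0$. Since $J_{id}^2 = id_{A \oplus A^*}$, formula~(\ref{supergeometric_torsion}) with $\lambda = 1$ gives $\mathcal{T}_\mu J_{id} = \tfrac{1}{2}(\mu_{id,id} - \mu)$. Applying the same bidegree identity to $\mu \in \mathcal{F}^{1,2}$ yields $\mu_{id} = \{id,\mu\} = \mu$, and hence $\mu_{id,id} = \mu$, so $\mathcal{T}_\mu J_{id} = 0$. This is the one small computation not already written out in the text.

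Combining these two observations gives $\mathcal{T}_\mu I_\pi = \mathcal{T}_\mu J_\pi$, and Proposition~\ref{pi_Nijenhuis} closes the argument: $I_\pi$ is Nijenhuis on $(A\oplus A^*, \mu)$ if and only if $J_\pi$ is, if and only if $\pi$ is Poisson on $(A,\mu)$. Alternatively, one can bypass (\ref{torsion_sum}) and argue directly with the expansion $\mathcal{N}_\mu(I_\pi, I_\pi) = \mathcal{N}_\mu(J_\pi,J_\pi) + 2\mathcal{N}_\mu(J_\pi,J_{id}) + \mathcal{N}_\mu(J_{id},J_{id})$ already displayed above the statement, together with (\ref{N_and_T}). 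There is essentially no obstacle: the whole proof is an algebraic rearrangement of ingredients assembled in the preceding paragraph, and the only new step is the one-line verification that $J_{id}$ is torsion-free.
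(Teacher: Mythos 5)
Your proof is correct and follows essentially the same route as the paper, which likewise reduces to Proposition~\ref{pi_Nijenhuis} via the bilinear expansion of $\mathcal{N}_\mu(J_\pi+J_{id},\,J_\pi+J_{id})$ and the vanishing of the cross term $\mathcal{N}_\mu(J_\pi,J_{id})$ computed in the preceding paragraph. Your explicit one-line check that $\mathcal{T}_\mu J_{id}=0$ (via $\{id,\mu\}=\mu$) only makes explicit a step the paper leaves implicit.
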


\section{Pairs of tensors on Lie algebroids}

In \cite{anutuneslaurentndc} we introduced a notion of compatibility for a pair of anti-commuting skew-symmetric $(1,1)$-tensors on a Courant algebroid.
\begin{defn} [\cite{anutuneslaurentndc}]
A pair $(I,J)$ of
skew-symmetric $(1,1)$-tensors on  a Courant algebroid with Courant structure $\Theta$ is said to be a {\em compatible pair},
if $I$ and $J$ anti-commute and $C_\Theta(I,J)=0$.
\end{defn}

In this section we show that well known structures defined by pairs of tensors on a Lie algebroid $(A,\mu)$, can be seen either as compatible pairs, or
 as Nijenhuis tensors
on the  Courant algebroid $(A \oplus A^*, \mu)$.

Let $(A,\mu)$ be a Lie algebroid.
Recall that a pair $(\pi, N)$, where $\pi$ is a bivector and $N$ is a  $(1,1)$-tensor on $A$ is a {\em Poisson-Nijenhuis structure} ($PN$ structure, for short) on $(A,\mu)$ if
 \begin{equation}
[ \pi, \pi ]_\mu =0, \,\,\,\, {\mathcal T}_\mu N=0, \,\,\,\, N \circ \pi^\#= \pi^\# \circ  N^*\,\,\,\, {\textrm{and}} \,\,\,\, C_\mu (\pi, N)=0.
\end{equation}

A pair $(\omega,N)$ formed by a $2$-form $\omega$ and a $(1,1)$-tensor $N$ on $A$ is an $\Omega N${\em structure} on $(A,\mu)$ if
\begin{equation}  \label{Omega_N}
d_\mu \omega=0, \,\,\,\, {\mathcal T}_\mu N=0, \,\,\,\, \omega^\flat \circ N= N^* \circ \omega^\flat \,\,\,\, {\textrm{and}} \,\,\,\, d_\mu (\omega_N)=0,
\end{equation}
where $\omega_N( .,.)=\omega(N.,.)$ or, equivalently, $\omega_N^\flat=\omega^\flat \circ N$.

A pair $(\varpi,N)$ formed by a $2$-form $\varpi$ and a $(1,1)$-tensor $N$ on $A$ is a {\em Hitchin pair} on $(A,\mu)$ if
\begin{equation}
 \varpi \textrm{ is symplectic}\footnote{A symplectic form on a Lie algebroid is a closed $2$-form which is non-degenerate (at each point).},
  \,\,\,\,  \varpi^\flat \circ N= N^* \circ \varpi^\flat \,\,\,\, {\textrm{and}} \,\,\,\, d_\mu (\varpi_N)=0.
\end{equation}

A pair $(\pi,\omega)$ formed by a bivector $\pi$ and a $2$-form $\omega$ on $A$ is a $P \Omega$ {\em structure} on $(A,\mu)$ if
\begin{equation}  \label{P_Omega}
[\pi, \pi]_\mu=0, \,\,\,\, d_\mu \omega=0 \,\,\,\, {\textrm{and}} \,\,\,\, d_\mu (\omega_{N})=0,
\end{equation}
where $N$ is the $(1,1)$-tensor on $A$ defined by $N=\pi^\# \circ \omega^\flat$.

Let us denote by $[.,.]_+$ the anti-commutator of two skew-symmetric tensors $I$ and $J$, i.e.,
$$[I,J]_+= I \circ J + J \circ I.$$

\begin{prop} \label{omegaN}
Let $(A,\mu)$ be a Lie algebroid, $N$ a Nijenhuis $(1,1)$-tensor and $\omega$ a closed $2$-form on $(A,\mu)$.
Then, the pair $(\omega, N)$ is an $\Omega N$ structure on $(A,\mu)$ if and only if $(J_{\omega},J_{N})$ is a compatible pair
on $(A \oplus A^*, \mu)$.
\end{prop}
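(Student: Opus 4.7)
Under the standing hypotheses $d_\mu\omega=0$ and $\mathcal{T}_\mu N=0$, the $\Omega N$-definition \eqref{Omega_N} reduces to the two remaining conditions (iii) $\omega^\flat\circ N=N^*\circ\omega^\flat$ and (iv) $d_\mu(\omega_N)=0$. A compatible pair $(J_\omega,J_N)$ requires, in turn, (a) anticommutativity $J_\omega\circ J_N+J_N\circ J_\omega=0$ and (b) the concomitant equation $C_\mu(J_\omega,J_N)=0$. The plan is to establish $(a)\Leftrightarrow(iii)$ by a $2\times 2$ block computation, and then, granting (a), to show $(b)\Leftrightarrow(iv)$ by a short big-bracket argument using the graded Jacobi identity.

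\textbf{Anticommutativity.} Multiplying the matrices for $J_\omega$ and $J_N$ yields
\[
J_\omega\circ J_N + J_N\circ J_\omega=\begin{pmatrix} 0 & 0 \\ \omega^\flat\circ N - N^*\circ\omega^\flat & 0\end{pmatrix},
\]
so the anticommutator vanishes exactly when (iii) holds. Note that (iii) is precisely what makes the $2$-form $\omega_N=\omega(N\cdot,\cdot)=\omega(\cdot,N\cdot)$ well-defined with no slot-ambiguity.

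\textbf{Concomitant.} By definition,
\[
C_\mu(J_\omega,J_N)=\{J_N,\{J_\omega,\mu\}\}+\{J_\omega,\{J_N,\mu\}\}.
\]
The function $\{J_\omega,\mu\}$ has bidegree $(0,3)$ and coincides up to sign with $d_\mu\omega$, so it vanishes by hypothesis; this kills the first summand (and one of the two terms produced by graded Jacobi below). For the second, the graded Jacobi identity of the bidegree-$(-1,-1)$ big bracket, together with $|J_\omega|=|J_N|=2$, yields
\[
\{J_\omega,\{J_N,\mu\}\}=\{\{J_\omega,J_N\},\mu\}+\{J_N,\{J_\omega,\mu\}\}=\{\{J_\omega,J_N\},\mu\}.
\]
Now $\{J_\omega,J_N\}$ has bidegree $(0,2)$: it is the $2$-form associated to the symmetrized contraction $(X,Y)\mapsto\omega(NX,Y)+\omega(X,NY)$, which under (iii) equals $2\omega_N$. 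Finally $\{\omega_N,\mu\}$ is (up to a universal constant) $d_\mu\omega_N$. Hence $C_\mu(J_\omega,J_N)$ is a nonzero scalar multiple of $d_\mu\omega_N$, and $(b)\Leftrightarrow(iv)$.

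\textbf{Main obstacle.} The only delicate step is pinning down the conventions of the big bracket: specifically, identifying $\{J_\omega,J_N\}$ with the symmetrized contraction $\omega(N\cdot,\cdot)+\omega(\cdot,N\cdot)$ and $\{\omega_N,\mu\}$ with (a constant multiple of) $d_\mu\omega_N$. These are standard bidegree computations, but one must keep careful track of signs from the graded symmetry $\{a,b\}=-(-1)^{(|a|-2)(|b|-2)}\{b,a\}$. Once these are in place, the two equivalences follow mechanically and the proposition is proved.
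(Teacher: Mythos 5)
Your proof is correct and follows essentially the same route as the paper's: the same $2\times 2$ block computation showing $[J_\omega,J_N]_+$ vanishes iff $\omega^\flat\circ N=N^*\circ\omega^\flat$, and the same big-bracket manipulation for the concomitant (killing $\{J_\omega,\mu\}=d_\mu\omega=0$, applying graded Jacobi, and identifying $\{J_\omega,J_N\}$ with $2\omega_N$). The sign bookkeeping you flag as the main obstacle is handled in the paper exactly as you suggest, via the convention $\omega_N=\frac{1}{2}\{N,\omega\}$, and is immaterial here since only the vanishing of $C_\mu(J_\omega,J_N)$ is at stake.
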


\begin{proof}
We start by noticing that $[J_{\omega},J_{N}]_+=\left(
\begin{array}{cc}
0 & 0\\
\omega^\flat N- N^* \omega^\flat & 0
\end{array}
\right)$, so that $J_{\omega}$ and $J_{N}$ anti-commute if and only if  $\omega^\flat  N= N^*  \omega^\flat $.

Taking into account the fact that $\omega$ is closed, we have
$$C_\mu (J_{\omega},J_{N})=  \{\omega, \{N, \mu \}\} + \{N, \{\omega, \mu \}\} = \{ \omega, \{N, \mu \}\}
 =   \{\{\omega, N \}, \mu \} = 2 \{ \mu, \omega_N \},$$
where in the last equality we used $\omega_N=\frac{1}{2} \{N, \omega \}$.
Thus, the $2$-form $\omega_N$ is closed if and only if $C_\mu (J_{\omega},J_{N})=0$.
\end{proof}

In the case where $N^2 = \lambda \, id_A$, for some $\lambda \in \Rr$, we have the following characterization of an $\Omega N$ structure.

\begin{thm}  \label{NijenhuisOmegaN}
Let $(A,\mu)$ be a Lie algebroid, $\omega$ a closed $2$-form on $(A,\mu)$ and $N$ a $(1,1)$-tensor on $A$ such
that $N^2 = \lambda \, id_A$, for some $\lambda \in \Rr$. Then, the pair $(\omega, N)$ is an $\Omega N$ structure on
$(A,\mu)$ if and only if $J_{\omega}+J_{N}$ is a Nijenhuis tensor on $(A \oplus A^*, \mu)$
and $[J_{\omega},J_{N}]_+ =0$.
\end{thm}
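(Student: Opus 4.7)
The plan is to apply the torsion-sum identity (\ref{torsion_sum}) to $I = J_\omega$ and $J = J_N$, obtaining
\[
{\mathcal T}_\mu(J_\omega + J_N) \;=\; {\mathcal T}_\mu J_\omega + {\mathcal T}_\mu J_N + {\mathcal N}_\mu(J_\omega, J_N),
\]
and to show that, once we exploit the matrix identity $[J_\omega,J_N]_+ = 0$, each of the three summands on the right captures exactly one of the defining conditions of an $\Omega N$ structure.

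First I would dispose of the $J_\omega$-term. Since $J_\omega^2 = 0$, formula (\ref{supergeometric_torsion}) with $\lambda = 0$ gives ${\mathcal T}_\mu J_\omega = \frac{1}{2}\{\omega,\{\omega,\mu\}\}$, which vanishes because the standing hypothesis $d_\mu\omega = 0$ means $\{\omega,\mu\} = 0$ in the big bracket. Next, the matrix computation performed at the start of the proof of Proposition \ref{omegaN} shows that the assumption $[J_\omega, J_N]_+ = 0$ is equivalent to $\omega^\flat \circ N = N^* \circ \omega^\flat$, placing us in the anti-commuting situation. Combining (\ref{N_concomitant}) with the computation $C_\mu(J_\omega, J_N) = 2\{\mu, \omega_N\}$ made in that same proof then yields
\[
{\mathcal N}_\mu(J_\omega, J_N) \;=\; \frac{1}{2}\, C_\mu(J_\omega, J_N) \;=\; d_\mu \omega_N.
\]
Finally, Proposition \ref{Nijenhuis} (whose hypothesis $N^2 = \lambda\, id_A$ is in force) identifies ${\mathcal T}_\mu J_N$ on $(A\oplus A^*,\mu)$ with the torsion of $N$ on $(A,\mu)$.

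Putting these reductions together, the torsion-sum identity collapses to
\[
{\mathcal T}_\mu(J_\omega + J_N) \;=\; {\mathcal T}_\mu J_N + d_\mu \omega_N.
\]
The two summands sit in disjoint bidegree components of $\mathcal F^{3}$: ${\mathcal T}_\mu J_N \in \mathcal F^{1,2}$ (it has the same bidegree as $\mu$ itself), whereas $d_\mu \omega_N \in \mathcal F^{0,3} = \Gamma(\wedge^3 A^*)$. Hence the sum vanishes if and only if each term vanishes separately. Combining this with the matrix equivalence encoded by $[J_\omega,J_N]_+ = 0$ and with the standing hypothesis $d_\mu\omega = 0$, we recover precisely the four conditions (\ref{Omega_N}) defining an $\Omega N$ structure, and both implications of the theorem follow simultaneously.

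The step I expect to be most delicate is this bidegree separation: one has to verify that ${\mathcal T}_\mu J_N$ and $d_\mu \omega_N$ really do live in different summands of $\mathcal F^{3}$, so that the vanishing of their sum forces each one to vanish on its own. Once this is secured, everything else is a routine bookkeeping exercise using the supergeometric formulas recalled in Sections 2 and 3.
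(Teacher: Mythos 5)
Your proposal is correct and follows essentially the same route as the paper's proof: expand ${\mathcal T}_\mu(J_\omega+J_N)$ via the torsion-sum identity, observe that ${\mathcal T}_\mu J_\omega$ vanishes, separate the remaining two terms by bidegree, and identify them with $d_\mu\omega_N$ (via ${\mathcal N}_\mu(J_\omega,J_N)=\tfrac12 C_\mu(J_\omega,J_N)$ under anti-commutation) and with ${\mathcal T}_\mu N$ (via Proposition~\ref{Nijenhuis}). The only cosmetic difference is that you kill ${\mathcal T}_\mu J_\omega$ using $d_\mu\omega=0$, whereas it already vanishes for an arbitrary $2$-form by a bidegree count, as the paper notes.
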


\begin{proof}
We know from Proposition \ref{Nijenhuis} that if $N^2 = \lambda \, id_A$, for some $\lambda \in \Rr$, then ${\mathcal{T}}_\mu N=0 \, \Leftrightarrow \, {\mathcal{T}}_\mu J_N=0$.
Moreover,  ${\mathcal{T}}_\mu J_{\omega}=0$ for any $2$-form $\omega$ (see \cite{anutuneslaurentndc}).
Now, using (\ref{torsion_sum}), we have
\begin{eqnarray*}
{\mathcal{T}}_\mu (J_{\omega}+J_{N})&=& {\mathcal{T}}_\mu J_{\omega}+ {\mathcal{N}}_\mu (J_{\omega}, J_{N})+
 {\mathcal{T}}_\mu J_{N}\\
&=& {\mathcal{N}}_\mu (J_{\omega}, J_{N})+  {\mathcal{T}}_\mu J_{N}
\end{eqnarray*}
and, by counting the bi-degrees, we have that ${\mathcal{T}}_\mu (J_{\omega}+J_{N})=0$ is equivalent to
\begin{equation}  \label{conditionsI_omega}
{\mathcal{N}}_\mu (J_{\omega}, J_{N})=0 \quad {\textrm {and}} \quad {\mathcal{T}}_\mu J_{N}=0.
\end{equation}
Because $J_\omega$ and $J_N$ anti-commute, we have that ${\mathcal{N}}_\mu (J_{\omega},J_{N})= \frac{1}{2} C_\mu (J_{\omega},J_{N})$. Thus, the two conditions in (\ref{conditionsI_omega})
 mean that $\omega_N$ is closed (see the proof of Proposition \ref{omegaN}) and $N$ is Nijenhuis, respectively.
\end{proof}

For Hitchin pairs we obtain the following result:

\begin{prop}
Let $(A,\mu)$ be a Lie algebroid, $N$ a $(1,1)$-tensor on $(A,\mu)$ and $\varpi$ a  symplectic form on $(A,\mu)$.
Then, the pair $(\varpi, N)$ is a Hitchin pair on $(A,\mu)$ if and only if $(J_{\varpi},J_{N})$ is a compatible pair
 on
$(A \oplus A^*, \mu)$.
\end{prop}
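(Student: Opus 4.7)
The plan is to mirror, almost verbatim, the argument in the proof of Proposition \ref{omegaN}, observing that the only property of $\omega$ used there (beyond being a $2$-form) was closedness, which also holds for any symplectic form. The proposition has two ingredients to match: the algebraic relation $\varpi^\flat \circ N = N^* \circ \varpi^\flat$ should correspond to anti-commutativity of $J_\varpi$ and $J_N$, and the differential condition $d_\mu \varpi_N = 0$ should correspond to vanishing of $C_\mu(J_\varpi, J_N)$.

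First I would check the anti-commutator in matrix form. Since $J_N$ is block-diagonal with blocks $N$ and $-N^*$, and $J_\varpi$ is strictly lower-triangular with $\varpi^\flat$ in the off-diagonal slot,
$$[J_\varpi, J_N]_+ = \begin{pmatrix} 0 & 0 \\ \varpi^\flat \circ N - N^* \circ \varpi^\flat & 0 \end{pmatrix},$$
so that $J_\varpi$ and $J_N$ anti-commute exactly when $\varpi^\flat \circ N = N^* \circ \varpi^\flat$, which is the algebraic part of the Hitchin-pair definition.

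Next I would tackle the concomitant. Since $\varpi$ is symplectic it is in particular closed, so $\{\mu, \varpi\} = 0$, and therefore
$$C_\mu(J_\varpi, J_N) = \{\varpi, \{N, \mu\}\} + \{N, \{\varpi, \mu\}\} = \{\varpi, \{N, \mu\}\} = \{\{\varpi, N\}, \mu\} = 2\{\mu, \varpi_N\},$$
using the graded Jacobi identity for the big bracket together with $\varpi_N = \tfrac{1}{2}\{N, \varpi\}$, exactly as in the proof of Proposition \ref{omegaN}. Hence $C_\mu(J_\varpi, J_N) = 0$ is equivalent to $d_\mu \varpi_N = 0$, the remaining Hitchin-pair condition. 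Combining the two equivalences yields the statement.

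No serious obstacle is expected: the symplecticity of $\varpi$ intervenes only through its closedness, so the computation transports from the $\Omega N$ case essentially unchanged, and the absence of a Nijenhuis hypothesis on $N$ in the Hitchin setting matches the fact that the notion of compatible pair imposes no such condition on its constituents. This is also why the statement can be phrased purely as compatibility, with no auxiliary torsion vanishing needed as was required in Theorem \ref{NijenhuisOmegaN}.
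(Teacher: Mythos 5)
Your proof is correct and is exactly the argument the paper intends: the paper states this proposition without proof precisely because it follows verbatim from the computation in Proposition \ref{omegaN}, with closedness of $\varpi$ (part of symplecticity) playing the role that closedness of $\omega$ played there. Both the matrix computation of $[J_\varpi,J_N]_+$ and the big-bracket identification of $C_\mu(J_\varpi,J_N)$ with $2\{\mu,\varpi_N\}$ match the paper's treatment, and your closing remark about the absence of a Nijenhuis hypothesis correctly explains why no torsion condition appears here.
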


\begin{rem}
There is no analogue of Theorem~\ref{NijenhuisOmegaN} for Hitchin pairs. Nevertheless,
in \cite{crainic}, the author proves that if $\varpi$ is a non-degenerate $2$-form, with inverse $\pi$, and $N$ is a $(1,1)$-tensor on $A$, the pair $(\varpi, N)$ is a Hitchin pair on $(A,\mu)$ if and only if
$J_{\sigma}+J_{N}+ J_{\pi}$ is a Nijenhuis tensor on $(A \oplus A^*, \mu)$ and $(J_{\sigma}+J_{N}+ J_{\pi})^2=- id _{A \oplus A^*}$, where $\sigma=- \varpi - \varpi \circ N^2$.
\end{rem}

In the case of PN structures, we have:
\begin{prop}
Let $(A,\mu)$ be a Lie algebroid, $N$ a Nijenhuis $(1,1)$-tensor on $(A,\mu)$ and $\pi$ a Poisson bivector on $(A,\mu)$.
Then, the pair $(\pi, N)$ is a Poisson-Nijenhuis structure on $(A,\mu)$ if and only if $(J_{\pi}, J_{N})$ is a compatible pair on $(A \oplus A^*, \mu)$.
\end{prop}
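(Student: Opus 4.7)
The plan is straightforward: since we are already given that $\pi$ is Poisson and $N$ is Nijenhuis, it only remains to match the two remaining $PN$ conditions, namely
\begin{equation*}
N \circ \pi^\# = \pi^\# \circ N^* \qquad \text{and} \qquad C_\mu(\pi, N) = 0,
\end{equation*}
with the two conditions defining a compatible pair, namely that $J_\pi$ and $J_N$ anti-commute and that $C_\mu(J_\pi, J_N) = 0$. So the proof reduces to two short, independent verifications.

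For the anti-commutation, I would multiply out the block matrices directly. With
\begin{equation*}
J_\pi = \left(\begin{array}{cc} 0 & \pi^\# \\ 0 & 0 \end{array}\right), \qquad J_N = \left(\begin{array}{cc} N & 0 \\ 0 & -N^* \end{array}\right),
\end{equation*}
a direct computation gives
\begin{equation*}
[J_\pi, J_N]_+ = \left(\begin{array}{cc} 0 & N \circ \pi^\# - \pi^\# \circ N^* \\ 0 & 0 \end{array}\right),
\end{equation*}
so $J_\pi$ and $J_N$ anti-commute if and only if $N \circ \pi^\# = \pi^\# \circ N^*$. This matches the first remaining $PN$ condition word for word, and is a near copy of the opening step of the proof of Proposition \ref{omegaN}.

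For the concomitants, I would invoke the convention of Section 3, under which $J_\pi$ and $J_N$ are represented in the big bracket simply by $\pi$ and $N$. Using the defining equation $C_\Theta(I,J) = \Theta_{I,J} + \Theta_{J,I}$ with $\Theta = \mu$ then yields
\begin{equation*}
C_\mu(J_\pi, J_N) = \{N, \{\pi, \mu\}\} + \{\pi, \{N, \mu\}\},
\end{equation*}
which is precisely the supergeometric expression for the Magri--Morosi concomitant $C_\mu(\pi, N)$ on the Lie algebroid $(A, \mu)$. Hence $C_\mu(J_\pi, J_N) = 0$ if and only if $C_\mu(\pi, N) = 0$. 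There is no real obstacle in the argument; the only place where one should pause is this last identification, namely that the big-bracket formula $\{N, \{\pi, \mu\}\} + \{\pi, \{N, \mu\}\}$ really encodes the classical concomitant, but this follows from the bidegree bookkeeping and conventions already set up in Sections 2 and 3 and needs only to be invoked.
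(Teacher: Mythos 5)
Your proposal is correct and follows essentially the same route as the paper: the paper's proof likewise computes $[J_{\pi},J_{N}]_+$ as a block matrix to identify anti-commutation with $N\circ\pi^\#=\pi^\#\circ N^*$, and then simply notes $C_\mu(J_\pi,J_N)=C_\mu(\pi,N)$, which is exactly your big-bracket identification. No gaps.
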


\begin{proof}
Notice that $[J_{\pi},J_{N}]_+=\left(
\begin{array}{cc}
0 & N \pi^\# - \pi^\# N^*\\
0 & 0
\end{array}
\right)$, so that $J_{\pi}$ and $J_{N}$ anti-commute if and only if  $N \pi^\# = \pi^\# N^* $.  Also, we have $C_\mu (J_{\pi},J_{N})= C_\mu (\pi, N)$.
\end{proof}

When $N^2 = \lambda \, id_A$, for some $\lambda \in \Rr$, we recover a result from \cite{YKS11}, which is a characterization of Poisson-Nijenhuis structures.

\begin{thm}
Let $(A,\mu)$ be a Lie algebroid, $\pi$ a bivector on $A$ and $N$ a $(1,1)$-tensor on $A$ such that $N^2 = \lambda \, id_A$, for some $\lambda \in \Rr$.
Then, the pair $(\pi, N)$ is a Poisson-Nijenhuis structure on $(A,\mu)$ if and only if $J_{\pi}+J_{N}$ is a Nijenhuis tensor on $(A \oplus A^*, \mu)$
and $[J_{\pi},J_{N}]_+ =0$.
\end{thm}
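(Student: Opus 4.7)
My plan is to follow closely the pattern of Theorem \ref{NijenhuisOmegaN}. From the previous proposition the anti-commutation $[J_{\pi},J_{N}]_{+}=0$ is already equivalent to the compatibility $N\pi^{\#}=\pi^{\#}N^{*}$. So the task reduces to recovering the three remaining conditions of a Poisson-Nijenhuis structure --- $[\pi,\pi]_{\mu}=0$, $\mathcal{T}_{\mu}N=0$ and $C_{\mu}(\pi,N)=0$ --- from the single requirement $\mathcal{T}_{\mu}(J_{\pi}+J_{N})=0$.

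The key step is the identity (\ref{torsion_sum}), which gives
$$\mathcal{T}_{\mu}(J_{\pi}+J_{N})=\mathcal{T}_{\mu}J_{\pi}+\mathcal{N}_{\mu}(J_{\pi},J_{N})+\mathcal{T}_{\mu}J_{N}.$$
Proposition \ref{pi_Nijenhuis} identifies $\mathcal{T}_{\mu}J_{\pi}$ with $\tfrac{1}{2}\{\pi,\{\pi,\mu\}\}$; Proposition \ref{Nijenhuis}, using the hypothesis $N^{2}=\lambda\, id_{A}$, equates $\mathcal{T}_{\mu}J_{N}=0$ with $\mathcal{T}_{\mu}N=0$; and since $J_{\pi}$ and $J_{N}$ anti-commute, formula (\ref{N_concomitant}) together with the easy identity $C_{\mu}(J_{\pi},J_{N})=C_{\mu}(\pi,N)$ gives $\mathcal{N}_{\mu}(J_{\pi},J_{N})=\tfrac{1}{2}C_{\mu}(\pi,N)$.

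The decisive step --- and the only one that requires any thought --- is a bidegree count. Recalling that $\pi\in\mathcal{F}^{2,0}$, $N\in\mathcal{F}^{1,1}$ and $\mu\in\mathcal{F}^{1,2}$, and that the big bracket has bidegree $(-1,-1)$, one finds $\{\pi,\{\pi,\mu\}\}\in\mathcal{F}^{3,0}$, $C_{\mu}(\pi,N)\in\mathcal{F}^{2,1}$ and $\mathcal{T}_{\mu}J_{N}\in\mathcal{F}^{1,2}$. These three bidegrees are pairwise distinct, so the vanishing of $\mathcal{T}_{\mu}(J_{\pi}+J_{N})$ is equivalent to the separate vanishing of each summand, which yields precisely the three remaining $PN$ conditions. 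Combined with the anti-commutation, this closes the equivalence in both directions.

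I do not anticipate a substantial obstacle: all of the ingredients have been set up in earlier sections, and the proof is essentially the $\Omega N$ argument of Theorem \ref{NijenhuisOmegaN} transcribed with $\omega$ replaced by $\pi$, the only genuine input being the bidegree separation that forces the total torsion to split into the three independent conditions.
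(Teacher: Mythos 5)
Your proposal is correct and follows essentially the same route as the paper: decompose $\mathcal{T}_\mu(J_\pi+J_N)$ via (\ref{torsion_sum}), separate the three terms by their distinct bidegrees $(3,0)$, $(2,1)$, $(1,2)$, and identify them with the $PN$ conditions using Propositions \ref{pi_Nijenhuis} and \ref{Nijenhuis} and formula (\ref{N_concomitant}). The paper's proof merely says ``by counting the bi-degrees''; your explicit bidegree computation is exactly what that phrase abbreviates.
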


\begin{proof}
Using (\ref{torsion_sum}) we have,
\begin{equation*}
{\mathcal{T}}_\mu (J_{\pi}+J_{N})  =  {\mathcal{T}}_\mu J_{\pi}+ {\mathcal{N}}_\mu (J_{\pi},J_{N} )+{\mathcal{T}}_\mu J_{N}
\end{equation*}
and, by counting the bi-degrees, we get that the condition ${\mathcal{T}}_\mu (J_{\pi}+J_{N} )  =0$ is equivalent to
$${\mathcal{T}}_\mu J_{\pi}=0, \quad {\mathcal{N}}_\mu (J_{\pi},J_{N} )=0 \quad {\textrm {and}} \quad {\mathcal{T}}_\mu J_{N}=0.$$
From Proposition \ref{pi_Nijenhuis}, (\ref{N_concomitant}) and Proposition \ref{Nijenhuis}, the above equations mean that $\pi$ is a Poisson bivector,
$C_\mu(\pi, N)=0$ and $N$ is Nijenhuis, respectively.
\end{proof}

\begin{rem}
In \cite{anutuneslaurentndc} we showed that, given a bivector $\pi$ and a $(1,1)$-tensor $N$ on $A$ such that $N^2 = \lambda \, id_A$, for some $\lambda \in \Rr$,
then $(\pi, N)$ is a PN structure on $(A, \mu)$ if and only if $(J_{\pi},J_{N} )$ is a Poisson-Nijenhuis pair on the Courant algebroid $(A \oplus A^*, \mu)$.
\end{rem}

For $P \Omega$ structures, we have the following:
\begin{prop}  \label{P_OmegaA}
Let $(A,\mu)$ be a Lie algebroid, $\pi$ a Poisson bivector on $(A,\mu)$ and $\omega$ a closed $2$-form on $(A,\mu)$. Consider the $(1,1)$-tensor $N$ on $A$ defined by $N=\pi^\# \circ \omega^\flat$, and the corresponding $(1,1)$-tensor on $A\oplus A^*$, $J_N= \left(
\begin{array}{cc}
\pi^\# \circ \omega^\flat &0\\
0 & - \omega^\flat \circ \pi^\#
\end{array}
\right) $.
Then, the pair $(\pi, \omega)$ is a $P \Omega$ structure on $(A,\mu)$ if and only if  $(J_{\omega},J_N)$ is a  compatible pair on
 $(A \oplus A^*, \mu)$.
\end{prop}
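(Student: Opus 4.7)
The plan is to mirror the proof of Proposition~\ref{omegaN}, exploiting that the hypotheses already include the first two $P\Omega$ conditions ($[\pi,\pi]_\mu=0$ and $d_\mu\omega=0$). Under these hypotheses, the $P\Omega$ definition collapses to the single equation $d_\mu(\omega_N)=0$, while $(J_\omega, J_N)$ being a compatible pair means $[J_\omega, J_N]_+ =0$ together with $C_\mu (J_\omega, J_N)=0$. It therefore suffices to check that the first of these is automatic and that the second is equivalent to $d_\mu(\omega_N)=0$.

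First I would establish the anti-commutation $[J_\omega, J_N]_+=0$ by a direct block computation. Because $N=\pi^\#\circ\omega^\flat$ and $\pi$, $\omega$ are skew, one has $N^*=\omega^\flat\circ\pi^\#$, and from the matrix forms displayed in the statement one reads off that $J_\omega\circ J_N$ and $J_N\circ J_\omega$ have opposite $(2,1)$-blocks $\pm\,\omega^\flat\circ\pi^\#\circ\omega^\flat$, with all other blocks zero. Hence anti-commutation is unconditional once $N$ is defined this way, and requires nothing from $\pi$ or $\omega$ beyond their skew-symmetry.

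Next I would unpack the concomitant via the big-bracket formula
\[
C_\mu(J_\omega, J_N)=\{N,\{\omega,\mu\}\}+\{\omega,\{N,\mu\}\}.
\]
Closedness of $\omega$ gives $\{\omega,\mu\}=0$, killing the first summand, and the graded Jacobi identity (together with $\{\omega,\mu\}=0$ used again) rewrites the second as $\{\{\omega,N\},\mu\}$. Using $\omega_N=\tfrac12\{N,\omega\}$ converts this to $2\{\mu,\omega_N\}=2\,d_\mu(\omega_N)$, exactly as in the proof of Proposition~\ref{omegaN}, yielding the desired equivalence. The key observation is that the argument there used only closedness of $\omega$, not Nijenhuisness of $N$ nor how $N$ was built, so it applies verbatim. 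The only thing to watch is the bookkeeping of graded signs in the Jacobi step, which can be read off directly from the earlier computation since the bidegrees of the ingredients coincide; I do not expect a real obstacle.
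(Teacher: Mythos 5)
Your proposal is correct and follows essentially the same route as the paper: the paper likewise notes that $J_\omega$ and $J_N$ anti-commute automatically, and then reduces $C_\mu(J_\omega,J_N)$ via closedness of $\omega$ and the Jacobi identity to (a multiple of) $d_\mu(\omega_N)$, the only cosmetic difference being that the paper expresses $J_N$ as $\{\omega,\pi\}$ and computes $\{\omega,\{\omega,\pi\}\}=-2\,\omega_N$ where you invoke $\omega_N=\tfrac12\{N,\omega\}$ directly as in Proposition~\ref{omegaN}.
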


\begin{proof}
It is easy to see that  $J_\omega$ and $J_N$ anti-commute. The $2$-form $\omega$ being closed we have, taking into account the fact that $J_N=\{\omega, \pi \}$,
\begin{equation} \label{pi_omega}
C_\mu (J_\omega, J_N)=C_\mu (\omega, \{ \omega, \pi \})= \{ \{  \omega, \{ \omega, \pi \} \}, \mu \}= - 2 \{\omega_N, \mu \}.
\end{equation}
So, the $2$-form $\omega_N$ is closed if and only if $C_\mu (J_\omega, J_N)=0$.
\end{proof}


\section{Exact Poisson quasi-Nijenhuis structures (with background)}
Let $(A, \mu)$ be a Lie algebroid, $H$ a closed $3$-form on  $(A, \mu)$ and consider the Courant algebroid with background $(A\oplus A^*, \mu + H)$.

Poisson quasi-Nijenhuis structures with background on Lie algebroids were introduced in \cite{antunes1}. We recall that a {\em Poisson quasi-Nijenhuis structure
 with background} on $(A, \mu)$ is a quadruple $(\pi, N, \phi, H)$, where $\pi$ is a bivector, $N$ is a $(1,1)$-tensor and $\phi$ and $H$ are closed
 $3$-forms such that $N \circ \pi^\#= \pi^\# \circ N^*$ and
 \begin{enumerate}
 \item[(i)] $\pi$ is Poisson,
 \item[(ii)] $C_\mu(\pi, N)(\alpha, \beta)= 2 H(\pi^\#(\alpha), \pi^\#(\beta),.)$, for all $\alpha, \beta \in \Gamma(A^*)$,
 \item[(iii)] ${\mathcal T}_{\mu}N (X,Y)= \pi^\# ( H(NX,Y,.)+ H(X,NY,.)+ \phi(X,Y,.))$, for all $ X,Y \in \Gamma(A)$,
 \item[(iv)] $d_{\mu_{N}} \phi= d_\mu \mathcal H$,
 \end{enumerate}
  with $\mathcal H (X,Y,Z)= \circlearrowleft_{X,Y,Z} H(NX,NY,Z)$, for all $X,Y,Z \in \Gamma(A)$, where $\circlearrowleft_{X,Y,Z}$
means sum after circular permutation on $X$, $Y$ and $Z$.

A Poisson quasi-Nijenhuis structure
 with background $(\pi, N, \phi, H)$ is called {\em exact} if $\phi= d_\mu \omega$, $\omega^\flat \circ N= N^* \circ \omega^\flat$ and condition (iv) is replaced by
\begin{itemize}
 \item[(iv')]
$
 i_N d_\mu \omega- d_\mu \omega_N- {\mathcal H}$ is proportional to $H$,
 \end{itemize}
where
 $i_N d_\mu \omega(X,Y,Z)=d_\mu \omega(NX,Y,Z)+ d_\mu \omega(X,NY,Z)+d_\mu \omega(X,Y,NZ)$, for all $X,Y,Z \in \Gamma(A)$.

In \cite{antunes1} it is proved \footnote{The quadruple considered in \cite{antunes1} is $(\pi, N, -d_\mu \omega, H)$ and should be $(\pi, N, d_\mu \omega, H)$.} that if $J_N + J_\pi + J_\omega$ is a Nijenhuis
tensor on $(A \oplus A^*, \mu + H)$ and satisfies $(J_N + J_\pi + J_\omega)^2= \lambda \, id_{A \oplus A^*}$, with $\lambda \in \{-1, 0, 1 \}$,
then the quadruple $(\pi, N, d_\mu \omega, H)$ is a Poisson quasi-Nijenhuis structure with background  on $(A, \mu)$.
It is easy to see that the same result holds for any $ \lambda \in \Rr$. It is worth noticing that $(J_N + J_\pi + J_\omega)^2= \lambda \, id_{A \oplus A^*}$,
$\lambda \in \Rr$,
is equivalent to the three conditions: $N \circ \pi^\#= \pi^\# \circ N^*$,
$\omega^\flat \circ N= N^* \circ \omega^\flat$ and $N^2 + \pi^\# \circ \omega^\flat = \lambda \, id_A$.
Using the notion of exact Poisson quasi-Nijenhuis structure with background, we deduce the following (see the proof of Theorem 2.5 in \cite{antunes1}):

\begin{thm}  \label{exact_PqNb}
Let $(A,\mu)$ be a Lie algebroid, $\pi$ a bivector, $\omega$ a $2$-form, $H$ a closed $3$-form and $N$ a $(1,1)$-tensor on $A$ such that
 $N \circ \pi^\#= \pi^\# \circ N^*$, $\omega^\flat \circ N= N^* \circ \omega^\flat$ and
 $N^2 + \pi^\# \circ \omega^\flat$ is proportional to  $id_A$. Then, $J_N + J_\pi + J_\omega$ is a Nijenhuis tensor
  on the Courant algebroid $(A \oplus A^*, \mu + H)$ if and only if the quadruple $(\pi, N, d_\mu \omega,H)$ is an exact
   Poisson quasi-Nijenhuis structure with background on $(A, \mu)$.
\end{thm}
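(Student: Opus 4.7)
The plan is to parallel the bidegree computation used in the proof of Theorem~2.5 of \cite{antunes1}, upgrading it to a biconditional that incorporates the exactness hypothesis $\phi = d_\mu \omega$. Set $\mathcal{I} := J_N + J_\pi + J_\omega$ and $\Theta := \mu + H$. As observed in the paragraph preceding the theorem, the three algebraic compatibilities on $N$, $\pi$ and $\omega$ are jointly equivalent to $\mathcal{I}^2 = \lambda\, id_{A\oplus A^*}$, so formula~(\ref{supergeometric_torsion}) gives
$$
2\,\mathcal{T}_\Theta \mathcal{I} \;=\; \{\mathcal{I},\{\mathcal{I},\Theta\}\} \,-\, \lambda\,\Theta.
$$
Hence $\mathcal{I}$ is Nijenhuis on $(A\oplus A^*, \mu + H)$ if and only if $\{\mathcal{I},\{\mathcal{I},\Theta\}\} = \lambda\,\Theta$, and what remains is to show that this identity is equivalent to conditions (i), (ii), (iii) (with $\phi = d_\mu\omega$) and (iv') taken together.

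I would then expand the double bracket bilinearly and project the resulting identity onto its bidegree components. Since $J_N$, $J_\pi$, $J_\omega$, $\mu$ and $H$ carry bidegrees $(1,1)$, $(2,0)$, $(0,2)$, $(1,2)$ and $(0,3)$ respectively, and the big bracket has bidegree $(-1,-1)$, several of the formal summands vanish automatically (e.g.\ $\{J_\omega, H\} = 0$). As $\lambda\,\Theta$ has components only in bidegrees $(1,2)$ and $(0,3)$, the equation $\{\mathcal{I},\{\mathcal{I},\Theta\}\} = \lambda\,\Theta$ splits into four independent scalar conditions, one in each of the bidegrees $(3,0)$, $(2,1)$, $(1,2)$ and $(0,3)$.

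The third step is to identify each bidegree equation with one of the structural conditions. In bidegree $(3,0)$ the sole surviving term is $\{J_\pi,\{J_\pi,\mu\}\} = [\pi,\pi]_\mu$, and its vanishing is (i). In bidegree $(2,1)$ the $\mu$-contributions assemble, via~(\ref{def_conc}), into $C_\mu(\pi,N)$, while the $H$-contribution $\{J_\pi,\{J_\pi,H\}\}$ unpacks (up to sign) to $\alpha,\beta\mapsto 2 H(\pi^\#\alpha,\pi^\#\beta,\cdot)$; equating their sum to zero is (ii). In bidegree $(1,2)$ the term $\{J_N,\{J_N,\mu\}\}$ carries $2\mathcal{T}_\mu N$, the mixed pair $\{J_\pi,\{J_\omega,\mu\}\}+\{J_\omega,\{J_\pi,\mu\}\}$ supplies the $\phi = d_\mu\omega$ term through $\pi^\#$, and $\{J_\pi,\{J_N,H\}\}+\{J_N,\{J_\pi,H\}\}$ furnishes the $H$-term, yielding (iii). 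Finally, the bidegree $(0,3)$ equation, after using $\{N,\omega\} = 2\omega_N$ and the algebraic compatibilities, must reduce to the proportionality (iv').

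The main obstacle is the bidegree $(0,3)$ identification. In~\cite{antunes1}, with $\phi$ kept as an arbitrary closed $3$-form, the analogous component produced the closure identity (iv), $d_{\mu_N}\phi = d_\mu\mathcal{H}$. Here, under the exactness hypothesis $\phi = d_\mu\omega$, the bidegree $(0,3)$ sum acquires the extra contributions $\{J_N,\{J_\omega,\mu\}\}+\{J_\omega,\{J_N,\mu\}\}$, and these must be combined with $\{J_N,\{J_N,H\}\}$ and $\{J_\pi,\{J_\omega,H\}\}+\{J_\omega,\{J_\pi,H\}\}$, by repeated use of the graded Jacobi identity for the big bracket together with the relations $N\pi^\# = \pi^\# N^*$, $\omega^\flat N = N^* \omega^\flat$ and $N^2 + \pi^\#\omega^\flat = \lambda\, id_A$, into the single $3$-form $i_N d_\mu\omega - d_\mu\omega_N - \mathcal{H} - \lambda H$. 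Verifying that the cancellation yields precisely this $3$-form --- in particular that the coefficient in front of $H$ is exactly $\lambda$ --- is the delicate computational step. Once it is in place, the theorem follows by assembling the four bidegree identifications, both directions being obtained simultaneously since each bidegree equation is equivalent to its matching structural condition.
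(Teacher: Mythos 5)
Your proposal follows essentially the same route as the paper, which itself only sketches the argument by pointing to the bidegree decomposition of $\{\mathcal{I},\{\mathcal{I},\mu+H\}\}$ carried out in the proof of Theorem 2.5 of \cite{antunes1}: writing the torsion via (\ref{supergeometric_torsion}) and splitting into the four bidegree components $(3,0)$, $(2,1)$, $(1,2)$, $(0,3)$, which match conditions (i), (ii), (iii) and (iv') exactly as you describe. The only quibble is a sign in your final $3$-form: the paper's remark following the theorem records the proportionality constant in (iv') as $-\lambda$, i.e.\ the $(0,3)$ component reduces to $i_N d_\mu\omega - d_\mu\omega_N - \mathcal{H} + \lambda H = 0$ rather than $-\lambda H$, but since (iv') only demands proportionality to $H$ this does not affect the equivalence.
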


Notice that in Theorem~\ref{exact_PqNb}, if $N^2 + \pi^\# \circ \omega^\flat = \lambda \, id_A$, for some $\lambda \in \Rr$, then the constant of proportionality that should be considered in (iv') is $- \lambda$, i.e., $i_N d_\mu \omega- d_\mu \omega_N- {\mathcal H} = - \lambda H$.

\

 A Poisson quasi-Nijenhuis structure on a Lie algebroid $(A, \mu)$ is a Poisson quasi-Nijenhuis structure with background, with $H=0$. This notion was introduced, on manifolds,
 in \cite{stienonxu} and then extended to Lie algebroids in \cite{cndcdn}.
 An exact Poisson quasi-Nijenhuis structure with background $H=0$ is called an {\em exact Poisson quasi-Nijenhuis structure}. In this case,
 the $3$-form $i_N d_\mu \omega$ is also exact.

Next, we consider $H=0$ and a special case where the assumption $N^2+ \pi^\# \circ \omega^\flat= \lambda \, id_A$, $\lambda \in \Rr$, in Theorem~\ref{exact_PqNb} is replaced by
$N^2= \lambda \, id_A$, $\lambda \in \Rr$.

 \begin{thm} \label{exactPqN}
Let $(A,\mu)$ be a Lie algebroid, $\pi$ a bivector, $\omega$ a $2$-form and $N$ a $(1,1)$-tensor on $A$ such that $N \circ \pi^\#= \pi^\# \circ N^*$,
$\omega^\flat \circ N= N^* \circ \omega^\flat$
and
$N^2 = \lambda \, id_A$,
for some $\lambda \in \Rr$. If  $J_N+ J_ \pi + J_\omega$ is a Nijenhuis tensor on the Courant algebroid
$(A \oplus A^*, \mu) $,
 then the triple $(\pi, N,  d_\mu \omega)$ is an exact
Poisson quasi-Nijenhuis structure on $(A, \mu)$.
\end{thm}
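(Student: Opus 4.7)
The plan is to extract the four defining conditions of an exact Poisson quasi-Nijenhuis structure from the single identity $\mathcal{T}_\mu I = 0$, with $I = J_N + J_\pi + J_\omega$, by splitting it along bidegrees. I would begin by iterating (\ref{torsion_sum}) and using the bilinearity of $\mathcal{N}_\mu$ to expand
\begin{equation*}
\mathcal{T}_\mu I = \mathcal{T}_\mu J_\pi + \mathcal{T}_\mu J_\omega + \mathcal{T}_\mu J_N + \mathcal{N}_\mu(J_N, J_\pi) + \mathcal{N}_\mu(J_N, J_\omega) + \mathcal{N}_\mu(J_\pi, J_\omega).
\end{equation*}
The hypotheses $N \circ \pi^\# = \pi^\# \circ N^*$ and $\omega^\flat \circ N = N^* \circ \omega^\flat$ are exactly $[J_N, J_\pi]_+ = 0$ and $[J_N, J_\omega]_+ = 0$, so by (\ref{N_concomitant}) the two middle concomitants collapse to $\tfrac12 C_\mu(\pi, N)$ and $\tfrac12 C_\mu(N, \omega)$ respectively.

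A bidegree count using $J_N \in \mathcal{F}^{1,1}$, $J_\pi \in \mathcal{F}^{2,0}$, $J_\omega \in \mathcal{F}^{0,2}$, and $\mu \in \mathcal{F}^{1,2}$ distributes the summands into four distinct bidegrees: $\mathcal{T}_\mu J_\pi \in \mathcal{F}^{3,0}$, $\mathcal{N}_\mu(J_N, J_\pi) \in \mathcal{F}^{2,1}$, both $\mathcal{T}_\mu J_N$ and $\mathcal{N}_\mu(J_\pi, J_\omega)$ in $\mathcal{F}^{1,2}$, $\mathcal{N}_\mu(J_N, J_\omega) \in \mathcal{F}^{0,3}$, while $\mathcal{T}_\mu J_\omega$ vanishes because its putative bidegree $(-1,4)$ is inadmissible. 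Consequently $\mathcal{T}_\mu I = 0$ decouples into the four independent equations
\begin{equation*}
[\pi, \pi]_\mu = 0, \quad C_\mu(\pi, N) = 0, \quad \mathcal{T}_\mu J_N + \mathcal{N}_\mu(J_\pi, J_\omega) = 0, \quad C_\mu(N, \omega) = 0,
\end{equation*}
of which the first two are conditions (i) and (ii) of the exact Poisson quasi-Nijenhuis data with $H = 0$.

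For the $(0,3)$ equation, expanding $C_\mu(N, \omega) = \{\omega, \mu_N\} + \{N, \{\omega, \mu\}\}$ and applying graded Jacobi to $\{N, \{\mu, \omega\}\}$ gives $d_{\mu_N}\omega = i_N d_\mu \omega - 2 d_\mu \omega_N$ (a computation parallel to the one in the proof of Proposition~\ref{omegaN}, but now without the simplification coming from $d_\mu \omega = 0$), which yields $C_\mu(N, \omega) = 2(d_\mu \omega_N - i_N d_\mu \omega)$. Its vanishing is thus $i_N d_\mu \omega = d_\mu \omega_N$, and since $H = 0$ forces $\mathcal{H} = 0$, this is exactly condition (iv') with $H = 0$.

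The main obstacle is the $(1,2)$ equation. Proposition~\ref{Nijenhuis} identifies $\mathcal{T}_\mu J_N$ with the ordinary Nijenhuis torsion $\mathcal{T}_\mu N$, so what remains is to unwind $\mathcal{N}_\mu(J_\pi, J_\omega)(X, Y)$ for $X, Y \in \Gamma(A)$ using the explicit Dorfman bracket on $(A \oplus A^*, \mu)$ -- the Lie bracket on pairs in $\Gamma(A)$, the Lie derivative on the mixed pair starting with a section of $A$, and its adjoint on the reversed mixed pair -- and to show, via Cartan's formula for $d_\mu \omega$, that the resulting sum collapses to $-\pi^\#\bigl(d_\mu \omega(X, Y, \cdot)\bigr)$. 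Combined with the identification of $\mathcal{T}_\mu J_N$, this yields condition (iii) with $\phi = d_\mu \omega$ and $H = 0$. A cleaner shortcut would be to observe that this $(1,2)$ equation is literally the $H = 0$ specialization of the corresponding bidegree equation in the supergeometric proof of Theorem~\ref{exact_PqNb} in \cite{antunes1}, so one may simply invoke that calculation rather than redo it.
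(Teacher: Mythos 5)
Your proposal is correct and follows essentially the same route as the paper: since $\mathcal{N}_\mu(I,I)=2\mathcal{T}_\mu I$, your iterated use of (\ref{torsion_sum}) is the same expansion the paper performs on $\mathcal{N}_\mu(J_N+J_\pi+J_\omega,\,J_N+J_\pi+J_\omega)$, and the bidegree split into the four equations $[\pi,\pi]_\mu=0$, $C_\mu(\pi,N)=0$, $\mathcal{T}_\mu J_N+\mathcal{N}_\mu(J_\pi,J_\omega)=0$ and $\mathcal{N}_\mu(J_N,J_\omega)=0$ is exactly the paper's. The only (immaterial) variation is that you evaluate the $(0,3)$ component by a big-bracket/Cartan identity, whereas the paper computes $\mathcal{N}_\mu(J_N,J_\omega)$ directly on sections via (\ref{Nijenhuisconcomitant}); both give $i_Nd_\mu\omega=d_\mu\omega_N$.
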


\begin{proof}
 We compute,
 \begin{align*}
 {\mathcal{N}}_{\mu}(J_N + J_{\pi}+ J_{\omega}, J_N + J_{\pi}+ J_{\omega}) =
  {\mathcal{N}}_\mu (J_{N},J_{N}) + 2 {\mathcal{N}}_\mu(J_N, J_\pi )  + 2 {\mathcal{N}}_\mu (J_{N},J_{\omega})\\
   + {\mathcal{N}}_\mu (J_{\pi},J_{\pi} )+
  2 {\mathcal{N}}_\mu (J_{\pi},J_{\omega} )+ {\mathcal{N}}_\mu (J_{\omega},J_{\omega} )\\
 = {\mathcal{N}}_\mu (J_{N},J_{N}) +  C_\mu(\pi, N) + 2 {\mathcal{N}}_\mu (J_{N},J_{\omega} )+ \mu_{\pi, \pi} + 2 {\mathcal{N}}_\mu(J_\pi, J_\omega )
 \end{align*}
 and, by counting the bi-degrees, we obtain that ${\mathcal{N}}_{\mu}(J_N + J_{\pi}+ J_{\omega}, J_N + J_{\pi}+ J_{\omega})=0$ if and only if
 \begin{itemize}
 \item[i)]
 $\mu_{\pi, \pi}=0,$
 \item[ii)]
 $   C_\mu(\pi, N) =0, $
\item [iii)]
$ {\mathcal{N}}_\mu (J_{N},J_{N}) =-2{\mathcal{N}}_\mu (J_{\pi},J_{\omega}),$
\item [iv)]
$ {\mathcal{N}}_\mu (J_{N},J_{\omega})=0.$
 \end{itemize}
Applying both members of iii) to any $X+ 0, Y+0 \in \Gamma(A \oplus A^*)$, we get
 \begin{equation*}
 {\mathcal{T}}_\mu N (X,Y)+0 = -{\mathcal{N}}_\mu (J_{\pi},J_{\omega}) (X+ 0, Y+0)
 \end{equation*}
 which gives, using (\ref{Nijenhuisconcomitant}),
 \begin{equation}
 {\mathcal{T}}_\mu N (X,Y) = \pi ^\#(d_\mu \omega(X,Y,.)).
 \end{equation}
 Using again (\ref{Nijenhuisconcomitant}) we compute, for any $X +\alpha, Y+ \beta \in \Gamma(A \oplus A^*)$,
 $$ {\mathcal{N}}_\mu (J_{N},J_{\omega})(X + \alpha, Y+ \beta)=(0, i_N d_\mu \omega(X,Y,.)-d_\mu \omega_N(X,Y,.)).$$
Thus, $$ {\mathcal{N}}_\mu (J_{N},J_{\omega})=0  \, \, \Leftrightarrow \, \, i_N d_\mu \omega = d_\mu \omega_N.$$
\end{proof}

Now, assume that $(\pi, N, d_\mu \omega)$ is an exact Poisson quasi-Nijenhuis structure on a Lie algebroid $(A,\mu)$, with $N^2  = \lambda \, id_A$, for some $\lambda \in \Rr$.
Then, we have that ${\mathcal{T}}_\mu N(X,Y)= \pi ^\#(d_\mu \omega(X,Y,.))$, $X, Y \in \Gamma(A)$ and, using the formula \cite{YKS11}
$$\langle {\mathcal{T}}_\mu J_{N}(X +\alpha, Y+ \beta), Z + \eta \rangle = \langle {\mathcal{T}}_\mu N(X,Y), \eta \rangle+
\langle {\mathcal{T}}_\mu N(Y,Z), \alpha \rangle + \langle {\mathcal{T}}_\mu N(Z,X), \beta \rangle,$$
for all $X +\alpha, Y+ \beta, Z + \eta \in \Gamma(A \oplus A^*)$, we obtain
$${\mathcal{T}}_\mu J_{N}(X +\alpha, Y+ \beta)= \pi ^\#(d_\mu \omega(X,Y,.)) + d_\mu \omega(Y, \pi^\#(\alpha),.)- d_\mu \omega(X, \pi^\#(\beta),.).$$
On the other hand, from (\ref{Nijenhuisconcomitant}), we get
 \begin{align*}
 {\mathcal{N}}_\mu ( J_ \pi , J_\omega)(X + \alpha,Y + \beta))= - \pi ^\#(d_\mu \omega(X,Y,.))- d_\mu \omega(Y, \pi^\#(\alpha),.)+ d_\mu \omega(X, \pi^\#(\beta),.)\\
  - {\mathcal L}_X(\omega^\flat \circ \pi^\# (\beta)) + \omega^\flat \circ \pi^\# ({\mathcal L}_X \beta) + i_Y d (\omega^\flat \circ \pi^\# (\alpha)) - \omega^\flat \circ \pi^\# (i_Y d \alpha).
\end{align*}
So, in the case where $\omega^\flat \circ \pi^\#= k\,id_{A^*}$, for some $k \in \Rr$, we get a converse of Theorem~\ref{exactPqN}, which is a particular case of Theorem~\ref{exact_PqNb}:

\begin{cor}
Let $(\pi, N, d_\mu \omega)$ be an exact
Poisson quasi-Nijenhuis structure on a Lie algebroid $(A, \mu)$ with
$N^2 = \lambda \, id_A$,
for some $\lambda \in \Rr$. If $\omega^\flat \circ \pi^\#= k\,id_{A^*}$, for some $k \in \Rr$,
 then $J_N+ J_ \pi + J_\omega$ is a Nijenhuis tensor on the Courant algebroid
$(A \oplus A^*, \mu) $.
\end{cor}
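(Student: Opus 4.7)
The plan is to deduce the corollary as the special case $H = 0$ of Theorem~\ref{exact_PqNb}, so the main task is to verify that the algebraic hypothesis of that theorem, namely that $N^2 + \pi^\# \circ \omega^\flat$ is proportional to $id_A$, follows from the assumptions at hand. The remaining algebraic constraints $N \circ \pi^\# = \pi^\# \circ N^*$ and $\omega^\flat \circ N = N^* \circ \omega^\flat$ are already built into the definition of an exact Poisson quasi-Nijenhuis structure.

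First I would observe that because $\pi$ is skew-symmetric one has $\pi^\# = -(\pi^\#)^*$ (under the canonical identification $A^{**} \cong A$), and similarly $\omega^\flat = -(\omega^\flat)^*$. Consequently
$$(\omega^\flat \circ \pi^\#)^* = (\pi^\#)^* \circ (\omega^\flat)^* = \pi^\# \circ \omega^\flat.$$
Taking the adjoint of the hypothesis $\omega^\flat \circ \pi^\# = k\, id_{A^*}$ therefore yields $\pi^\# \circ \omega^\flat = k\, id_A$, and combined with $N^2 = \lambda\, id_A$ this gives $N^2 + \pi^\# \circ \omega^\flat = (\lambda + k)\, id_A$, as required.

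Once this is established, the hypotheses of Theorem~\ref{exact_PqNb} (with $H = 0$) are met and the assumption that $(\pi, N, d_\mu \omega)$ is an exact Poisson quasi-Nijenhuis structure is exactly the assertion that the quadruple $(\pi, N, d_\mu \omega, 0)$ is an exact Poisson quasi-Nijenhuis structure with (trivial) background on $(A, \mu)$. The ``if'' direction of that theorem immediately delivers that $J_N + J_\pi + J_\omega$ is a Nijenhuis tensor on $(A \oplus A^*, \mu)$.

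Alternatively, one may finish the direct computation sketched immediately before the statement: the torsion-sum identity \eqref{torsion_sum} together with the bilinearity of $\mathcal{N}_\mu$ reduces $\mathcal{T}_\mu(J_N + J_\pi + J_\omega) = 0$, after Proposition~\ref{pi_Nijenhuis}, the condition $C_\mu(\pi, N) = 0$, and condition (iv') with $H = 0$ have been applied, to the identity $\mathcal{T}_\mu J_N + \mathcal{N}_\mu(J_\pi, J_\omega) = 0$. The two explicit expressions displayed in the text show that their sum collapses to the residue $-\mathcal{L}_X(\omega^\flat \pi^\# \beta) + \omega^\flat \pi^\# \mathcal{L}_X \beta + i_Y d(\omega^\flat \pi^\# \alpha) - \omega^\flat \pi^\#(i_Y d \alpha)$, which vanishes identically once $\omega^\flat \circ \pi^\#$ is a scalar multiple of the identity. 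The main obstacle is conceptual rather than technical: recognizing that the scalar hypothesis on $\omega^\flat \circ \pi^\#$, via skew-symmetry, supplies precisely the proportionality condition $N^2 + \pi^\# \circ \omega^\flat \propto id_A$ which separates Theorem~\ref{exactPqN} (where it was absent, hence only one implication held) from its converse.
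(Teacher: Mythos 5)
Your proposal is correct, and in fact it covers both of the routes that the paper itself gestures at. The paper's displayed argument is your ``alternative'' one: it computes $\mathcal{T}_\mu J_N$ and $\mathcal{N}_\mu(J_\pi,J_\omega)$ explicitly, observes that their sum reduces to the residue $-\mathcal{L}_X(\omega^\flat\pi^\#\beta)+\omega^\flat\pi^\#(\mathcal{L}_X\beta)+i_Yd(\omega^\flat\pi^\#\alpha)-\omega^\flat\pi^\#(i_Yd\alpha)$, which dies when $\omega^\flat\circ\pi^\#=k\,id_{A^*}$, and then kills the remaining terms of the torsion sum using the defining conditions of the exact Poisson quasi-Nijenhuis structure (as in the proof of Theorem~\ref{exactPqN}). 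Your primary route --- reducing to Theorem~\ref{exact_PqNb} with $H=0$ --- is only mentioned in passing in the paper (``a particular case of Theorem~\ref{exact_PqNb}'') and is never justified there; the missing link is precisely the step you supply, namely that skew-symmetry of $\pi$ and $\omega$ gives $(\omega^\flat\circ\pi^\#)^*=\pi^\#\circ\omega^\flat$, so the hypothesis $\omega^\flat\circ\pi^\#=k\,id_{A^*}$ dualizes to $\pi^\#\circ\omega^\flat=k\,id_A$ and hence $N^2+\pi^\#\circ\omega^\flat=(\lambda+k)\,id_A$, which is exactly the proportionality hypothesis of Theorem~\ref{exact_PqNb}. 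That adjoint computation is correct, the conditions (iv$'$) with $H=0$ and $\omega^\flat\circ N=N^*\circ\omega^\flat$ are indeed part of the definition of an exact Poisson quasi-Nijenhuis structure, so the reduction is clean. What the reduction buys is brevity and a conceptual explanation of why the scalar hypothesis on $\omega^\flat\circ\pi^\#$ is what restores the converse of Theorem~\ref{exactPqN}; what the direct computation buys is independence from the citation to \cite{antunes1} underlying Theorem~\ref{exact_PqNb}. Either way the argument is complete.
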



\section{Complementary forms of Poisson bivectors}

Let $\pi$ be a Poisson bivector on a Lie algebroid $(A,\mu)$.
Recall \cite{vaisman} that a $2$-form $\omega$ is said to be a {\em complementary form of} $\pi$ on $(A,\mu)$ if
\begin{equation}  \label{Vaisman}
[\omega, \omega]_{\mu_{\pi}}=0.
\end{equation}

It is well known that, when $\pi$ is a Poisson bivector on a Lie algebroid $(A,\mu)$, the pair $(A^*, \mu_\pi)$ is a Lie algebroid and therefore
 $(A^* \oplus A, \mu_\pi)$ is a Courant algebroid.

The next proposition, which is a dual version of Propostion \ref{pi_Nijenhuis}, characterizes
complementary forms as Nijenhuis tensors on $(A^* \oplus A, \mu_\pi)$.

\begin{prop} \label{compl_forms}
Let $\omega$ be a $2$-form and $\pi$ a Poisson bivector on a Lie algebroid $(A, \mu)$. Then, $\omega$ is a complementary form of $\pi$ if and only if
 $J_\omega$ is a Nijenhuis tensor on the Courant algebroid $(A^* \oplus A,\mu_\pi)$.
\end{prop}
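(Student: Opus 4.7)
The plan is to mimic the proof of Proposition~\ref{pi_Nijenhuis} after swapping the roles of $A$ and $A^*$. A $2$-form $\omega$ on $A$ may be viewed as a bivector on the dual bundle, and the Lie algebroid $(A,\mu)$ is replaced here by the dual algebroid $(A^*,\mu_\pi)$, so the statement should reduce, formally, to the dual assertion: ``$\omega$, regarded as a bivector on $A^*$, is Poisson on $(A^*,\mu_\pi)$ if and only if $J_\omega$ is Nijenhuis on the Courant double $(A^*\oplus A,\mu_\pi)$''.

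First I would observe that $J_\omega^2=0$ as an endomorphism of $A^*\oplus A$, just as $J_\pi^2=0$ on $A\oplus A^*$. Hence the supergeometric torsion formula~(\ref{supergeometric_torsion}), applied with $\Theta=\mu_\pi$ and $\lambda=0$, gives
\[
\mathcal T_{\mu_\pi}J_\omega=\tfrac{1}{2}\{\omega,\{\omega,\mu_\pi\}\}.
\]
Then I would identify $\{\omega,\{\omega,\mu_\pi\}\}$ with the Schouten bracket $[\omega,\omega]_{\mu_\pi}$ of $\omega$ regarded as a bivector on the Lie algebroid $(A^*,\mu_\pi)$, exactly as, on the $A$-side of Proposition~\ref{pi_Nijenhuis}, one has $\{\pi,\{\pi,\mu\}\}=[\pi,\pi]_\mu$. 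Consequently $\mathcal T_{\mu_\pi}J_\omega=0$ is equivalent to $[\omega,\omega]_{\mu_\pi}=0$, which is Vaisman's condition~(\ref{Vaisman}) for $\omega$ to be a complementary form of $\pi$.

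The only (minor) point that requires attention is the symmetry of the big-bracket apparatus under the exchange $A\leftrightarrow A^*$: the canonical Poisson structure on $T^*[2]A[1]$ does not distinguish fibre coordinates from their conjugate momenta, and $J_\omega$ admits the same supergeometric description on $A^*\oplus A$ that $J_\pi$ has on $A\oplus A^*$. Once this symmetry is invoked, the argument is literally the proof of Proposition~\ref{pi_Nijenhuis} read in the mirror, and no essentially new computation is required.
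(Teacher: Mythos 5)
Your proposal is correct and is exactly the argument the paper intends: the paper states Proposition~\ref{compl_forms} without a written proof, remarking only that it is the dual version of Proposition~\ref{pi_Nijenhuis}, and your dualized computation ($J_\omega^2=0$, torsion formula with $\lambda=0$, identification of $\{\omega,\{\omega,\mu_\pi\}\}$ with $[\omega,\omega]_{\mu_\pi}$) is precisely that omitted argument spelled out. Your closing remark on the symmetry of the big bracket under the exchange $A\leftrightarrow A^*$ is the right point to flag and is all that is needed to make the mirroring legitimate.
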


\section{Compatibility of structures defined by pairs of tensors on Lie algebroids}

Usually, two geometric objects of the same type are said to be compatible if their sum is still an object of the same type.
In the same spirit, we introduce the next definition.

\begin{defn}  \label{compatible_struct}
Two $PN$ (respectively, $\Omega N$, $P \Omega$) structures on a Lie algebroid $(A, \mu)$ are said to be {\em compatible} if their sum is a $PN$
 (respectively, $\Omega N$, $P \Omega$) structure on $(A, \mu)$. Also, two Hitchin pairs on $(A, \mu)$ are said to be {\em compatible} if their sum is a Hitchin pair on $(A, \mu)$.
\end{defn}

\begin{exs}
Let $(g, N_1, N_2, N_3)$ be a hyperk\"{a}hler structure on a Lie algebroid $(A, \mu)$ \cite{bcg}. Define the K\"{a}hler forms $\omega_j$, $j=1,2,3$, by setting
$\omega_j(X,Y)=g(N_j(X),Y)$, for all $X, Y \in \Gamma(A)$.
It is known that the K\"{a}hler forms $\omega_j$ are symplectic and, if we denote by $\pi_j$, $j=1,2,3$, their inverse Poisson bivectors, we have \cite{antunesndc}:
\begin{itemize}
    \item the $\Omega N$ structures $(\omega_j, I_{j-1})$ and $(\omega_j, I_{j+1})$ are compatible;
    \item the $\Omega N$ structures $(\omega_{j-1}, I_{j})$ and $(\omega_{j+1}, I_{j})$ are compatible;
    \item the $PN$ structures $(\pi_j, I_{j-1})$ and $(\pi_j, I_{j+1})$ are compatible;
    \item the $PN$ structures $(\pi_{j-1}, I_{j})$ and $(\pi_{j+1}, I_{j})$ are compatible;
    \item the $P \Omega$ structures $(\pi_{j-1}, \omega_{j})$ and $(\pi_{j+1}, \omega_{j})$ are compatible,
\end{itemize}
where all the indices are taken in $\mathbb{Z}_3$.
\end{exs}

Next, we show that the compatibility of  $\Omega N$,  $PN$ and $P \Omega$ structures on a Lie algebroid $(A, \mu)$ can be established using the corresponding
associated tensors on the Courant algebroid $(A \oplus A^*, \mu)$.

\begin{prop}\label{compatible_sum_omegaN}
Let $(\omega, N)$ and $(\omega', N')$ be two $\Omega N$ structures on a Lie
 algebroid $(A, \mu)$. Then, $(\omega, N)$ and $(\omega', N')$ are compatible if and only if
 ${\mathcal{N}}_\mu(N,N')=0$,  ${\mathcal{N}}_\mu( J_\omega, J_{N'})+{\mathcal{N}}_\mu( J_{\omega'}, J_N)=0$ and
 $[J_{\omega}, J_{N'}]_+ + [J_{ \omega'}, J_{N}]_+ =0$.
\end{prop}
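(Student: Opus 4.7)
The plan is to unfold the four defining conditions of (\ref{Omega_N}) applied to $(\omega+\omega',\,N+N')$ and match them, one by one, with the three listed conditions (closedness being automatic). Since $\omega$ and $\omega'$ are closed, $d_\mu(\omega+\omega')=0$, and by the torsion-sum identity (\ref{torsion_sum}) we get
\[
\mathcal{T}_\mu(N+N')
= \mathcal{T}_\mu N + \mathcal{T}_\mu N' + \mathcal{N}_\mu(N,N')
= \mathcal{N}_\mu(N,N'),
\]
so the torsion condition on $N+N'$ is precisely the first listed condition. Next, expanding $(\omega+\omega')^\flat(N+N')=(N+N')^*(\omega+\omega')^\flat$ and cancelling the diagonal terms using the individual symmetrizations $\omega^\flat N=N^*\omega^\flat$ and $\omega'^\flat N'=(N')^*\omega'^\flat$ leaves $\omega^\flat N'+\omega'^\flat N-(N')^*\omega^\flat-N^*\omega'^\flat=0$, which is the off-diagonal entry of $[J_\omega,J_{N'}]_+ + [J_{\omega'},J_N]_+$; this is the third listed condition.

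It remains to handle $d_\mu((\omega+\omega')_{N+N'})=0$, and the key point is that this is equivalent to the second listed condition \emph{once the third is assumed}. Indeed, the third listed condition together with the individual anti-commutativities $[J_\omega,J_N]_+=[J_{\omega'},J_{N'}]_+=0$ (which hold by Proposition~\ref{omegaN}) gives
\[
[J_{\omega+\omega'},J_{N+N'}]_+
=[J_\omega,J_N]_+ + [J_{\omega'},J_{N'}]_+
+ [J_\omega,J_{N'}]_+ + [J_{\omega'},J_N]_+
= 0,
\]
so $J_{\omega+\omega'}$ and $J_{N+N'}$ anti-commute. Applying (\ref{N_concomitant}) and mimicking the computation of Proposition~\ref{omegaN} for the closed $2$-form $\omega+\omega'$ and the $(1,1)$-tensor $N+N'$ yields
\[
\mathcal N_\mu(J_{\omega+\omega'},J_{N+N'})
= \tfrac12\, C_\mu(J_{\omega+\omega'},J_{N+N'})
= d_\mu\bigl((\omega+\omega')_{N+N'}\bigr).
\]
On the other hand, bilinearity of $\mathcal N_\mu$ in its two arguments, together with $\mathcal N_\mu(J_\omega,J_N)=\mathcal N_\mu(J_{\omega'},J_{N'})=0$ (each follows again from Proposition~\ref{omegaN} and (\ref{N_concomitant})), gives
\[
\mathcal N_\mu(J_{\omega+\omega'},J_{N+N'})
= \mathcal N_\mu(J_\omega,J_{N'}) + \mathcal N_\mu(J_{\omega'},J_N).
\]
Combining the two displays shows that, under the third listed condition, $d_\mu((\omega+\omega')_{N+N'})=0$ is equivalent to the second. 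Putting the three equivalences together proves the claim.

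The main obstacle I anticipate is precisely this interdependence: the third condition must be invoked before the second can be read as closedness of $(\omega+\omega')_{N+N'}$, because without the anti-commutativity of $J_{\omega+\omega'}$ and $J_{N+N'}$ the identity (\ref{N_concomitant}) $\mathcal N_\mu=\tfrac12 C_\mu$ no longer applies and the passage from Nijenhuis concomitants to the $d_\mu$-closedness of the deformed $2$-form breaks down. This is why the three listed conditions are genuinely coupled in the statement; no new idea beyond the sign bookkeeping of Proposition~\ref{omegaN} is needed.
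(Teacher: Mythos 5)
Your proposal is correct and follows essentially the same route as the paper: reduce the first condition to $\mathcal T_\mu(N+N')=0$ via (\ref{torsion_sum}), identify the third condition with the anti-commutation of $J_{\omega+\omega'}$ and $J_{N+N'}$, and use bilinearity of $\mathcal N_\mu$ together with Proposition~\ref{omegaN} to identify the second condition with $d_\mu\bigl((\omega+\omega')_{N+N'}\bigr)=0$. Your explicit remark that the third condition must be in force before the second can be read as closedness of the deformed $2$-form is a point the paper leaves implicit, but the argument is the same.
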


\begin{proof}
Since $N$ and $N'$ are Nijenhuis, ${\mathcal{N}}_\mu(N,N')=0$ is equivalent to $N+N'$ being Nijenhuis (see (\ref{torsion_sum})). Using Proposition~\ref{omegaN}, we need to show that $(J_{\omega + \omega'}, J_{N+ N'})$ is a compatible pair on $(A \oplus A^*, \mu)$. We have,
\begin{eqnarray*}
[J_{\omega + \omega'}, J_{N+ N'}]_+ =0 & \Leftrightarrow & (\omega + \omega')^\flat \circ (N+N') = (N+N')^{*} \circ (\omega + \omega')^\flat \\
& \Leftrightarrow & \omega^\flat \circ N'+  (\omega')^\flat \circ N =(N')^{*} \circ \omega^\flat + N^{*} \circ (\omega')^\flat \\
& \Leftrightarrow & [J_{\omega}, J_{N'}]_+ + [J_{ \omega'}, J_{N}]_+ =0.
\end{eqnarray*}
From the bilinearity of ${\mathcal{N}}_\mu$ and Proposition \ref{omegaN}, we get
\begin{equation*}
{\mathcal{N}}_\mu(J_{\omega + \omega'}, J_{N+ N'})= {\mathcal{N}}_\mu(J_{\omega}, J_{N'})+ {\mathcal{N}}_\mu(J_{\omega'}, J_{N})
\end{equation*}
and so ${\mathcal{N}}_\mu(J_{\omega + \omega'}, J_{N+ N'})=0$ if and only if ${\mathcal{N}}_\mu(J_{\omega}, J_{N'})+ {\mathcal{N}}_\mu(J_{\omega'}, J_{N})=0$.
\end{proof}

As a consequence of the previous proposition, we have:
\begin{cor}
Let $(\omega, N)$ and $(\omega', N')$ be two compatible $\Omega N$ structures on $(A, \mu)$. Then,  $(\omega, N')$ is an $\Omega N$ structure if and only if $(\omega', N)$ is an $\Omega N$ structure.
When one of the pairs, $(\omega, N')$  or $(\omega', N)$, is an $\Omega N$ structure, the four $\Omega N$ structures $(\omega, N)$,
 $(\omega', N')$, $(\omega, N')$  and $(\omega', N)$ are pairwise compatible.
\end{cor}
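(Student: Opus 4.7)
The plan is to apply Proposition~\ref{compatible_sum_omegaN} twice: first to unpack the standing compatibility hypothesis, and then to verify the remaining pairs. From the compatibility of $(\omega,N)$ and $(\omega',N')$, that proposition yields the three identities
\begin{equation*}
{\mathcal N}_\mu(N,N')=0,\quad {\mathcal N}_\mu(J_\omega,J_{N'})+{\mathcal N}_\mu(J_{\omega'},J_N)=0,\quad [J_\omega,J_{N'}]_++[J_{\omega'},J_N]_+=0,
\end{equation*}
which drive the rest of the argument.

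For the equivalence, I would invoke Proposition~\ref{omegaN}: since $\omega,\omega'$ are closed and $N,N'$ are Nijenhuis, the pair $(\omega,N')$ is an $\Omega N$ structure iff $J_\omega$ and $J_{N'}$ anti-commute and ${\mathcal N}_\mu(J_\omega,J_{N'})=0$ (using ${\mathcal N}_\mu=\tfrac{1}{2}C_\mu$ for anti-commuting tensors, via~(\ref{N_concomitant})). The symmetric characterization holds for $(\omega',N)$. The second and third identities above exhibit each condition on $(\omega,N')$ as the negative of the corresponding condition on $(\omega',N)$, so one set vanishes precisely when the other does, giving the if-and-only-if.

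For the pairwise compatibility, suppose one, and hence both, of $(\omega,N')$ and $(\omega',N)$ is $\Omega N$. Then all four pairs drawn from $\{\omega,\omega'\}\times\{N,N'\}$ are $\Omega N$ structures, so for any such $(\widetilde\omega,\widetilde N)$ the identities $[J_{\widetilde\omega},J_{\widetilde N}]_+=0$ and ${\mathcal N}_\mu(J_{\widetilde\omega},J_{\widetilde N})=0$ hold individually. For each of the five pairs not already given as compatible, I would check the three conditions of Proposition~\ref{compatible_sum_omegaN}: the first reduces either to ${\mathcal N}_\mu(N,N')=0$ (already granted) or to $2{\mathcal T}_\mu N=0$ or $2{\mathcal T}_\mu N'=0$ via~(\ref{N_and_T}), which hold since $N,N'$ are Nijenhuis; the second and third conditions are sums of two cross-terms $[J_{\omega_a},J_{N_b}]_+$ and ${\mathcal N}_\mu(J_{\omega_a},J_{N_b})$, each of which is zero because the constituent pair is already known to be $\Omega N$.

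No real analytic obstacle is expected: everything follows from the bilinearity of ${\mathcal N}_\mu$ in its arguments and from Propositions~\ref{omegaN} and~\ref{compatible_sum_omegaN}. The only thing that needs attention is the routine bookkeeping, verifying in each of the five remaining pairs that the cross-terms appearing in the expansion of the compatibility conditions are indeed matched against one of the four pairs assumed to be $\Omega N$ structures.
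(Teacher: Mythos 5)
Your proposal is correct and follows essentially the same route as the paper: unpack the compatibility hypothesis via Proposition~\ref{compatible_sum_omegaN} into the three identities, observe that the anti-commutator and concomitant conditions for $(\omega,N')$ are the negatives of those for $(\omega',N)$, and conclude with Proposition~\ref{omegaN}. The only difference is that the paper dismisses the pairwise-compatibility claim as ``obvious,'' whereas you spell out the case-by-case check of the five remaining pairs --- which is exactly the bookkeeping the paper omits, and it goes through as you describe.
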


\begin{proof}
The $\Omega N$ structures $(\omega, N)$ and $(\omega', N')$ being compatible,
\begin{equation} \label{sum_compatible}
\omega^\flat \circ N'+ (\omega')^\flat \circ N = N'^{*} \circ \omega^\flat + N^* \circ (\omega')^\flat
\end{equation}
 holds. So, if $(\omega, N')$ (respectively, $(\omega', N)$) is an $\Omega N$ structure, then $\omega^\flat \circ N'= {N'}^* \circ \omega^\flat$ (respectively, $(\omega')^\flat \circ N= N^* \circ (\omega')^\flat$) and (\ref{sum_compatible}) implies $(\omega')^\flat \circ N = N^* \circ (\omega')^\flat$ (respectively, $\omega^\flat \circ N'= {N'}^* \circ \omega^\flat$).

 From Proposition \ref{compatible_sum_omegaN}, we have ${\mathcal{N}}_\mu( J_\omega, J_{N'})+{\mathcal{N}}_\mu( J_{\omega'}, J_N)=0$.
 Applying Proposition \ref{omegaN}, the first statement is proved. The second statement is obvious.
\end{proof}

Adapting the proof of Proposition~\ref{compatible_sum_omegaN}, we may establish the following for Hitchin pairs:

\begin{prop}
Let $(\varpi, N)$ and $(\varpi', N')$ be two Hitchin pairs on a Lie
 algebroid $(A, \mu)$ such that $\varpi + \varpi'$ is non-degenerate.
 Then, $(\varpi, N)$ and $(\varpi', N')$ are compatible if and only if
  ${\mathcal{N}}_\mu( J_\varpi, J_{N'})+{\mathcal{N}}_\mu( J_{\varpi'}, J_N)=0$ and $[J_{\varpi}, J_{N'}]_+ + [J_{ \varpi'}, J_{N}]_+ =0$.
\end{prop}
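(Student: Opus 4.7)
The plan is to mimic the structure of the proof of Proposition~\ref{compatible_sum_omegaN}: reduce ``$(\varpi+\varpi', N+N')$ is a Hitchin pair'' to three separate conditions and rewrite each supergeometrically.

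First, I would unpack what it means for the sum to be a Hitchin pair. Since $\varpi$ and $\varpi'$ are each closed, $\varpi+\varpi'$ is closed; non-degeneracy of the sum is part of the hypothesis, so $\varpi+\varpi'$ is automatically symplectic. For the intertwining condition $(\varpi+\varpi')^\flat\circ(N+N')=(N+N')^*\circ(\varpi+\varpi')^\flat$, expanding and using that $(\varpi,N)$ and $(\varpi',N')$ already satisfy their own intertwining conditions leaves exactly $\varpi^\flat N' + (\varpi')^\flat N = (N')^*\varpi^\flat + N^*(\varpi')^\flat$, which is the same as $[J_\varpi,J_{N'}]_+ + [J_{\varpi'},J_N]_+=0$, by the identical matrix computation used in the proof of Proposition~\ref{compatible_sum_omegaN}.

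Next, I would handle the closedness of $(\varpi+\varpi')_{N+N'}$. Using bilinearity of the assignment $\omega_N = \frac12\{N,\omega\}$ in both slots, one has $(\varpi+\varpi')_{N+N'} = \varpi_N + \varpi_{N'} + \varpi'_N + \varpi'_{N'}$. Since $d_\mu\varpi_N=0=d_\mu\varpi'_{N'}$ by hypothesis, the sum's closedness reduces to $d_\mu(\varpi_{N'}+\varpi'_N)=0$. Now for any closed $2$-form $\omega$ and $(1,1)$-tensor $N$, the computation in the proof of Proposition~\ref{omegaN} gives $C_\mu(J_\omega,J_N)=\{\{\omega,N\},\mu\}=\pm2\{\mu,\omega_N\}$; applying this to both $(\varpi,N')$ and $(\varpi',N)$ I obtain $C_\mu(J_\varpi,J_{N'})+C_\mu(J_{\varpi'},J_N) = \pm 2\,d_\mu(\varpi_{N'}+\varpi'_N)$, so closedness is equivalent to $C_\mu(J_\varpi,J_{N'})+C_\mu(J_{\varpi'},J_N)=0$.

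Finally, I need to translate this concomitant condition into the Nijenhuis concomitant condition appearing in the statement. From (\ref{Nijenhuisconcomitant2}) and (\ref{def_conc}), one has in general $2\,{\mathcal N}_\mu(I,J) = C_\mu(I,J) - \mu_{I\circ J + J\circ I}$. Adding this identity for $(J_\varpi,J_{N'})$ and $(J_{\varpi'},J_N)$ yields
\begin{equation*}
2\bigl({\mathcal N}_\mu(J_\varpi,J_{N'}) + {\mathcal N}_\mu(J_{\varpi'},J_N)\bigr) = C_\mu(J_\varpi,J_{N'}) + C_\mu(J_{\varpi'},J_N) - \mu_{[J_\varpi,J_{N'}]_+ + [J_{\varpi'},J_N]_+}.
\end{equation*}
Under the first compatibility condition $[J_\varpi,J_{N'}]_+ + [J_{\varpi'},J_N]_+=0$, the last term vanishes, so the Nijenhuis concomitant condition is equivalent to the concomitant condition, completing the equivalence. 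The only potentially delicate step is this last identity between ${\mathcal N}_\mu$ and $C_\mu$ (which relies crucially on the anti-commutator vanishing), but once that is in hand, the rest is a matter of bookkeeping in the big-bracket formalism.
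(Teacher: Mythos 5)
Your proof is correct and follows essentially the route the paper intends: it states this proposition without a separate proof, saying only that it is obtained by "adapting the proof of Proposition~\ref{compatible_sum_omegaN}", and your adaptation is exactly that, with the right modification (no Nijenhuis condition on $N+N'$, non-degeneracy supplied by hypothesis, and the closedness of $(\varpi+\varpi')_{N+N'}$ reduced to the cross-term concomitants via the computation in Proposition~\ref{omegaN}). Your final identity $2\,\mathcal{N}_\mu(I,J)=C_\mu(I,J)-[\cdot,\cdot]_{I\circ J+J\circ I}$ is just the combination of (\ref{Nijenhuisconcomitant2}) and (\ref{def_conc1}) that the paper packages as (\ref{N_concomitant}), so this is the same argument in slightly different bookkeeping.
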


Notice that if $\pi^{\#} \circ (\varpi')^{\flat}=- (\pi')^{\#} \circ \varpi^{\flat}$, where $\pi$ and $\pi'$ are the inverses of $\varpi$ and $\varpi'$,
respectively, then the sum $\varpi + \varpi'$ is non-degenerate.

\begin{prop}
Let $(\pi, N)$ and $(\pi', N')$ be two $P N$ structures on a Lie algebroid $(A, \mu)$. Then, $(\pi, N)$ and $(\pi', N')$ are compatible if and only if
 ${\mathcal{N}}_\mu(N,N')=0$, ${\mathcal{N}}_\mu(J_\pi,J_{\pi'})=0$,
  ${\mathcal{N}}_\mu( J_\pi, J_{N'})+{\mathcal{N}}_\mu( J_{\pi'}, J_N)=0$ and $[J_{\pi}, J_{N'}]_+ + [J_{ \pi'}, J_{N}]_+ =0$.
\end{prop}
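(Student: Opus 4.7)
The plan is to mimic the strategy used in Proposition~\ref{compatible_sum_omegaN} for the $\Omega N$ case. By Definition~\ref{compatible_struct}, the two $PN$ structures are compatible if and only if $(\pi+\pi',\, N+N')$ is itself a $PN$ structure on $(A,\mu)$. Using the characterization of $PN$ structures in Section~4 (the pair $(\pi,N)$ is a $PN$ structure exactly when $\pi$ is Poisson, $N$ is Nijenhuis, and $(J_\pi,J_N)$ is a compatible pair on $(A\oplus A^*,\mu)$), this reduces to showing three things: $N+N'$ is Nijenhuis on $(A,\mu)$, $\pi+\pi'$ is Poisson on $(A,\mu)$, and $(J_{\pi+\pi'},J_{N+N'})$ is a compatible pair on $(A\oplus A^*,\mu)$.

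The first two conditions follow by direct expansion. For $N+N'$, formula (\ref{torsion_sum}) in the Lie algebroid setting gives ${\mathcal T}_\mu(N+N')={\mathcal T}_\mu N+{\mathcal T}_\mu N'+{\mathcal N}_\mu(N,N')={\mathcal N}_\mu(N,N')$, so it vanishes iff ${\mathcal N}_\mu(N,N')=0$. For $\pi+\pi'$, the matrix form of $J_\pi$ and $J_{\pi'}$ immediately yields $J_\pi\circ J_{\pi'}=0=J_{\pi'}\circ J_\pi$, so they anti-commute; then (\ref{N_concomitant}) gives $2{\mathcal N}_\mu(J_\pi,J_{\pi'})=C_\mu(J_\pi,J_{\pi'})=\{\pi',\{\pi,\mu\}\}+\{\pi,\{\pi',\mu\}\}$, which in big-bracket terms is (a multiple of) $[\pi,\pi']_\mu$. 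Since $\pi$ and $\pi'$ are Poisson, $\pi+\pi'$ is Poisson iff $[\pi,\pi']_\mu=0$ iff ${\mathcal N}_\mu(J_\pi,J_{\pi'})=0$.

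Finally, I unpack the compatible pair condition for $(J_{\pi+\pi'},J_{N+N'})$, which requires anti-commutation together with $C_\mu(J_{\pi+\pi'},J_{N+N'})=0$. By bilinearity of $[\cdot,\cdot]_+$ together with $[J_\pi,J_N]_+=0=[J_{\pi'},J_{N'}]_+$ (from the two $PN$ hypotheses), the anti-commutation collapses to $[J_\pi,J_{N'}]_++[J_{\pi'},J_N]_+=0$. Assuming this anti-commutation, (\ref{N_concomitant}) applied to the sum yields $C_\mu(J_{\pi+\pi'},J_{N+N'})=2{\mathcal N}_\mu(J_{\pi+\pi'},J_{N+N'})$; bilinearity of ${\mathcal N}_\mu$ and the vanishing of ${\mathcal N}_\mu(J_\pi,J_N)$ and ${\mathcal N}_\mu(J_{\pi'},J_{N'})$ (from the $PN$ hypothesis on each pair, with anti-commutation of the individual pairs already known) then reduce the condition to ${\mathcal N}_\mu(J_\pi,J_{N'})+{\mathcal N}_\mu(J_{\pi'},J_N)=0$.

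The subtle point, and the main obstacle, is the last reduction: the cross pairs $(J_\pi,J_{N'})$ and $(J_{\pi'},J_N)$ are \emph{not} individually assumed to anti-commute, so one cannot convert the individual cross-term ${\mathcal N}_\mu$'s into $C_\mu$'s one by one. The argument must instead invoke anti-commutation only for the combined tensors $J_{\pi+\pi'}$ and $J_{N+N'}$, handing the full sum as a single compatible pair and then using bilinearity to isolate the surviving cross-terms. Aside from this bookkeeping, everything reduces to the same bilinear expansion that appeared in the $\Omega N$ case.
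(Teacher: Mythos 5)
Your proposal is correct and follows essentially the same route as the paper's proof: reduce compatibility to $(\pi+\pi',N+N')$ being a $PN$ structure, then use bilinearity of the torsion, the anti-commutator and the concomitants, together with the identities (\ref{torsion_sum}) and (\ref{N_concomitant}), to match each of the four defining conditions of a $PN$ structure for the sum with one of the four stated conditions. The ``subtle point'' you flag --- that anti-commutation must be invoked only for the combined tensors $J_{\pi+\pi'}$ and $J_{N+N'}$, not for the individual cross pairs --- is exactly the content of the paper's remark that $[J_{\pi},J_{N'}]_+ + [J_{\pi'},J_{N}]_+=0$ means $J_{N+N'}$ anti-commutes with $J_{\pi+\pi'}$, so your write-up is just a more explicit version of the same argument.
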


\begin{proof}
We have ${\mathcal{N}}_\mu(J_\pi,J_{\pi'})=0 \Leftrightarrow [\pi, \pi']_\mu=0$, which means that $\pi + \pi'$ is a Poisson bivector on $(A,\mu)$. As we already noticed,
${\mathcal{N}}_\mu(N,N')=0$ is equivalent to $N+N'$ being a Nijenhuis tensor on $(A,\mu)$. The condition $[J_{\pi}, J_{N'}]_+ + [J_{ \pi'}, J_{N}]_+ =0$ means that
$J_{N+N'}$ anti-commutes with $J_{\pi+\pi'}$, so that ${\mathcal{N}}_\mu( J_\pi, J_{N'})+{\mathcal{N}}_\mu( J_{\pi'}, J_N)=0$ is equivalent to $C_\mu(\pi+\pi',
N+N')=0$.
\end{proof}

As in the case of the $\Omega N$ structures, we have the following:
\begin{cor}
Let $(\pi, N)$ and $(\pi', N')$ be two compatible $PN$ structures on $(A, \mu)$. Then,  $(\pi, N')$ is a $PN$ structure if
and only if $(\pi', N)$ is a $PN$ structure. When one of the pairs, $(\pi, N')$  or $(\pi', N)$, is a $PN$ structure, the four $PN$ structures $(\pi, N)$,
 $(\pi', N')$, $(\pi, N')$  and $(\pi', N)$ are pairwise compatible.
\end{cor}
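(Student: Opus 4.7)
The plan is to mirror, step by step, the proof of the analogous corollary for $\Omega N$ structures, which precedes this statement.

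For the equivalence in the first assertion, I would start from the compatibility data of $(\pi,N)$ and $(\pi',N')$ given by the preceding proposition, namely
\[
\mathcal{N}_\mu(N,N')=0,\qquad \mathcal{N}_\mu(J_\pi,J_{\pi'})=0,
\]
\[
[J_\pi,J_{N'}]_+ + [J_{\pi'},J_N]_+ = 0,\qquad \mathcal{N}_\mu(J_\pi,J_{N'}) + \mathcal{N}_\mu(J_{\pi'},J_N) = 0.
\]
Assuming $(\pi,N')$ is a $PN$ structure gives the "diagonal" conditions $[J_\pi,J_{N'}]_+=0$ and $\mathcal{N}_\mu(J_\pi,J_{N'})=0$ (together with the already known facts that $\pi$ is Poisson and $N'$ is Nijenhuis, which use $\mathcal{N}_\mu(J_\pi,J_\pi)=0$ and $\mathcal{T}_\mu N'=0$). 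Plugging these into the last two compatibility identities immediately yields $[J_{\pi'},J_N]_+=0$ and $\mathcal{N}_\mu(J_{\pi'},J_N)=0$, which together with $\pi'$ Poisson and $N$ Nijenhuis give that $(\pi',N)$ is a $PN$ structure. The converse implication is identical by symmetry.

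For the second assertion, once all four pairs $(\pi,N)$, $(\pi',N')$, $(\pi,N')$, $(\pi',N)$ are known to be $PN$ structures, I would verify pairwise compatibility by checking, for each of the six pairs, the four conditions of the preceding proposition. The "diagonal" conditions $[J_\bullet,J_\bullet]_+=0$ and $\mathcal{N}_\mu(J_\bullet,J_\bullet)=0$ hold for each of the four pairs individually, and one expands the relevant anti-commutators and Nijenhuis concomitants using bilinearity: for instance, for the pair $\bigl((\pi,N'),(\pi',N)\bigr)$ one writes
\[
[J_\pi,J_N]_+ + [J_{\pi'},J_{N'}]_+ = 0 \qquad\text{and}\qquad \mathcal{N}_\mu(J_\pi,J_N) + \mathcal{N}_\mu(J_{\pi'},J_{N'}) = 0,
\]
both of which hold because each summand already vanishes by the $PN$ property of the individual pairs; the conditions $\mathcal{N}_\mu(N,N')=0$ and $\mathcal{N}_\mu(J_\pi,J_{\pi'})=0$ are inherited from the original compatibility hypothesis. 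The remaining five pairs involve a repeated bivector or a repeated Nijenhuis tensor, in which case the new "mixed" conditions reduce, by bilinearity, either to the individual $PN$ conditions or to the original compatibility conditions.

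The only step that requires some care is the bookkeeping in the pairwise compatibility check, since there are six pairs and each has four conditions; however, no genuinely new computation is needed beyond bilinearity of $\mathcal{N}_\mu$, $[\,\cdot\,,\,\cdot\,]_+$, and $C_\mu$, together with the identities already available. No nontrivial obstacle is expected.
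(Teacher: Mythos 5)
Your proposal is correct and follows exactly the route the paper intends: the paper states this corollary without proof, pointing to the $\Omega N$ case, and your argument is precisely the $PN$ analogue of that proof (deriving the anti-commutation first, then using $\mathcal{N}_\mu(J_{\pi'},J_N)=\tfrac{1}{2}C_\mu(\pi',N)$, and checking the six pairwise compatibilities by bilinearity). No gaps.
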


Next, we consider the compatibility of $P \Omega$ structures.

\begin{prop}
Let $(\pi, \omega)$ and $(\pi', \omega')$ be two $P \Omega$ structures on a Lie algebroid $(A, \mu)$ such that
$[J_{\pi}, J_{\omega'}]_+ + [J_{ \pi'}, J_{\omega}]_+ =0$.
Then, $(\pi, \omega)$ and $(\pi', \omega')$ are compatible if and only if ${\mathcal{N}}_\mu(J_\pi,J_{\pi'})=0$ and $C_\mu (J_{\omega}, J_{N'}) + C_\mu (J_{\omega'}, J_{N})=0$, where
$N=\pi^{\#} \circ \omega^{\flat}$ and $N'=(\pi')^{\#} \circ (\omega')^{\flat}$.
\end{prop}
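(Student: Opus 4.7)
The plan is to apply Proposition~\ref{P_OmegaA} to the summed pair $(\pi+\pi',\omega+\omega')$ and unpack the resulting conditions using bilinearity of the big-bracket operations.

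First, I rewrite the summed $(1,1)$-tensor. The sum $\omega+\omega'$ is closed, and expanding gives
\[
N'':=(\pi+\pi')^\#\circ(\omega+\omega')^\flat=N+N'+\pi^\#\circ(\omega')^\flat+(\pi')^\#\circ\omega^\flat.
\]
Reading the hypothesis $[J_\pi,J_{\omega'}]_+ + [J_{\pi'},J_\omega]_+=0$ block-wise extracts the mutually dual identities $\pi^\#\circ(\omega')^\flat+(\pi')^\#\circ\omega^\flat=0$ and $(\omega')^\flat\circ\pi^\#+\omega^\flat\circ(\pi')^\#=0$. The first yields $N''=N+N'$, hence $J_{N''}=J_N+J_{N'}$. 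By Proposition~\ref{P_OmegaA}, $(\pi+\pi',\omega+\omega')$ is then a $P\Omega$ structure if and only if $\pi+\pi'$ is Poisson and $(J_{\omega+\omega'},J_{N+N'})$ is a compatible pair on $(A\oplus A^*,\mu)$.

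For the Poisson part, $J_\pi$ and $J_{\pi'}$ trivially anti-commute (their composites vanish in matrix form), so (\ref{N_concomitant}) gives ${\mathcal N}_\mu(J_\pi,J_{\pi'})=\frac{1}{2}C_\mu(J_\pi,J_{\pi'})$, and the latter encodes the Schouten bracket $[\pi,\pi']_\mu$. Since $\pi$ and $\pi'$ are already Poisson, $\pi+\pi'$ is Poisson if and only if ${\mathcal N}_\mu(J_\pi,J_{\pi'})=0$, which produces the first stated condition.

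For the compatible-pair condition on $(J_{\omega+\omega'},J_{N+N'})$, bilinearity expands both $[\cdot,\cdot]_+$ and $C_\mu$ into four terms, and the two diagonal terms in each expansion vanish because $(\pi,\omega)$ and $(\pi',\omega')$ are $P\Omega$ structures (Proposition~\ref{P_OmegaA}), leaving only the two cross terms. Substituting $N=\pi^\#\circ\omega^\flat$, $N'=(\pi')^\#\circ(\omega')^\flat$ and applying the two matrix identities extracted above, a short direct computation should yield $\omega^\flat\circ N'=(N')^*\circ\omega^\flat$ and $(\omega')^\flat\circ N=N^*\circ(\omega')^\flat$ \emph{individually}, so both cross anti-commutators vanish and the anti-commutation condition holds automatically. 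What then remains is exactly the second stated condition $C_\mu(J_\omega,J_{N'})+C_\mu(J_{\omega'},J_N)=0$. The main obstacle is this last verification: confirming that the single matrix-level hypothesis really does force each cross anti-commutator to vanish separately, so that no further anti-commutation assumption has to be added to the conclusion.
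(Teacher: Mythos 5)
Your proposal is correct and follows essentially the same route as the paper: apply Proposition~\ref{P_OmegaA} to the summed pair, use the block identities from $[J_{\pi}, J_{\omega'}]_+ + [J_{ \pi'}, J_{\omega}]_+ =0$ to get $(\pi+\pi')^\#\circ(\omega+\omega')^\flat=N+N'$ and the required anti-commutation, and expand $C_\mu$ by bilinearity so that only the cross terms survive. The verification you flag does go through: the two identities give $\omega^\flat\circ(\pi')^\#\circ(\omega')^\flat=-(\omega')^\flat\circ\pi^\#\circ(\omega')^\flat=(\omega')^\flat\circ(\pi')^\#\circ\omega^\flat$, i.e. $\omega^\flat\circ N'=(N')^*\circ\omega^\flat$, and symmetrically for the other cross term (the paper checks only the summed version, which is all that is needed).
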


\begin{proof}
First, we notice that $[J_{\pi}, J_{\omega'}]_+ + [J_{ \pi'}, J_{\omega}]_+ =0$ is equivalent to $\pi^{\#} \circ (\omega')^{\flat}=- (\pi')^{\#} \circ \omega^{\flat}$. From this, we get
$$(\pi + \pi')^\# \circ (\omega + \omega')^\flat= N + N'$$
and also
$$
(\omega + \omega')^\flat \circ (N+N')= (N+N')^* \circ (\omega + \omega')^\flat,
$$
which is equivalent to $[J_{\omega + \omega'}, J_{N+ N'}]_+=0$.
Now, applying Proposition~\ref{P_OmegaA}, we have
$$C_\mu (J_{\omega + \omega'}, J_{N+N'})= C_\mu (J_{\omega}, J_{N'}) + C_\mu (J_{\omega'}, J_{N})$$
and the proof is complete.
\end{proof}

\begin{cor} \label{cor_plicas}
Let $(\pi, \omega)$ and $(\pi', \omega')$ be two $P \Omega$ structures on a Lie algebroid $(A, \mu)$
such that $[J_{\pi}, J_{\omega'}]_+ + [J_{ \pi'}, J_{\omega}]_+ =0$. Assume that $(\pi, \omega)$ and $(\pi', \omega')$ are compatible.
 Then, $(\pi, \omega')$ is a $P \Omega$ structure if and only if $(\pi', \omega)$ is a $P \Omega$ structure. When one of the pairs, $(\pi, \omega')$  or $(\pi', \omega)$, is a $P \Omega$ structure, the four $P \Omega$ structures $(\pi, \omega)$,
 $(\pi', \omega')$, $(\pi, \omega')$  and $(\pi', \omega)$ are pairwise compatible.
\end{cor}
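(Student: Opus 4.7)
The plan is to translate all the $P\Omega$ conditions into iterated big-bracket identities by means of Proposition~\ref{P_OmegaA} and the computation in~(\ref{pi_omega}). Writing $N=\{\omega,\pi\}$, $N'=\{\omega',\pi'\}$, $\tilde N=\{\omega',\pi\}$ and $\tilde N'=\{\omega,\pi'\}$, Proposition~\ref{P_OmegaA} identifies the $P\Omega$ condition for the pairs $(\pi,\omega)$, $(\pi',\omega')$, $(\pi,\omega')$ and $(\pi',\omega)$ with the respective vanishing of
\[
\{\omega,\{\omega,\mu_\pi\}\},\quad \{\omega',\{\omega',\mu_{\pi'}\}\},\quad \{\omega',\{\omega',\mu_\pi\}\},\quad \{\omega,\{\omega,\mu_{\pi'}\}\};
\]
the first two hold by hypothesis. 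From the proof of the preceding proposition, $[J_\pi,J_{\omega'}]_{+}+[J_{\pi'},J_\omega]_{+}=0$ is equivalent to $\tilde N+\tilde N'=0$, and taking $\{\cdot,\mu\}$ while using $\{\omega,\mu\}=\{\omega',\mu\}=0$ yields
\[
\{\omega',\mu_\pi\}+\{\omega,\mu_{\pi'}\}=0.\qquad(\star)
\]
The given compatibility, by the preceding proposition, is equivalent to $[\pi,\pi']_\mu=0$ together with $C_\mu(J_\omega,J_{N'})+C_\mu(J_{\omega'},J_N)=0$; a direct big-bracket computation rewrites the latter as
\[
\{\omega,\{\omega',\mu_\pi\}\}+\{\omega,\{\omega',\mu_{\pi'}\}\}=0.\qquad(\star\star)
\]

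Because $\{\omega,\omega'\}=0$ by bidegree, the graded Jacobi identity gives the commutation $\{\omega,\{\omega',x\}\}=\{\omega',\{\omega,x\}\}$ for every $x$. Applying $\{\omega,\cdot\}$ to $(\star)$ and combining the result with $(\star\star)$ yields $\{\omega,\{\omega',\mu_{\pi'}\}\}=\{\omega,\{\omega,\mu_{\pi'}\}\}$; applying $\{\omega',\cdot\}$ to $(\star)$, commuting the resulting second term, and substituting the previous equality produces the central identity
\[
\{\omega',\{\omega',\mu_\pi\}\}=-\{\omega,\{\omega,\mu_{\pi'}\}\}.
\]
In view of the initial translation, this is precisely the equivalence between $(\pi,\omega')$ and $(\pi',\omega)$ being $P\Omega$ structures on $(A,\mu)$.

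For the pairwise compatibility claim, assume that $(\pi,\omega')$, hence $(\pi',\omega)$, is $P\Omega$. A short iteration of the manipulations used above then shows that every mixed iterated bracket $\{\omega_2,\{\omega_1,\mu_{\pi_0}\}\}$ with $\omega_1,\omega_2\in\{\omega,\omega'\}$ and $\pi_0\in\{\pi,\pi'\}$ vanishes. Of the six unordered pairs among the four structures, $\{(\pi,\omega),(\pi',\omega')\}$ is compatible by hypothesis and $\{(\pi,\omega'),(\pi',\omega)\}$ has the same sum $(\pi+\pi',\omega+\omega')$ (because $\tilde N+\tilde N'=0$ forces $(\pi+\pi')^{\#}(\omega+\omega')^{\flat}=N+N'$), so it is compatible too. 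For each of the four remaining pairs, the sum takes the form $(2\pi,\omega+\omega')$, $(\pi+\pi',2\omega)$, $(2\pi',\omega+\omega')$ or $(\pi+\pi',2\omega')$, and the associated $P\Omega$ condition, through Proposition~\ref{P_OmegaA} and bilinearity of $C_\mu$, reduces to a sum of iterated brackets of the above form, each of which vanishes.

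The main obstacle is the derivation of the central identity: it requires balancing the three ingredients $(\star)$, $(\star\star)$, and the commutation of closed 2-forms in the big bracket through a carefully ordered sequence of operations. Once established, the rest of the argument is purely bilinear expansion.
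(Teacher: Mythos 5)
Your proof is correct and follows essentially the same route as the paper's: the anticommutation hypothesis and the concomitant condition from the preceding proposition are combined to transfer closedness between the two mixed deformed $2$-forms, and pairwise compatibility then follows by bilinear expansion into terms already known to be closed. The only difference is presentational — you phrase everything as iterated big-bracket identities $\{\omega_j,\{\omega_i,\mu_{\pi_k}\}\}$, whereas the paper works directly with the deformed $2$-forms $\omega_N$, $\omega'_{N'}$, $\omega_{\hat{N}}=\omega'_N$, $\omega'_{\hat{N}}=-\omega_{N'}$.
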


\begin{proof}
Let us set $\hat{N}= \pi^{\#} \circ (\omega')^{\flat}= -(\pi')^{\#} \circ (\omega)^{\flat}$ and, as before, $N=\pi^{\#} \circ \omega^{\flat}$ and $N'=(\pi')^{\#} \circ (\omega')^{\flat}$.
According to the previous proposition, $C_\mu (J_{\omega}, J_{N'}) + C_\mu (J_{\omega'}, J_{N})=0$ or,
equivalently,
\begin{equation} \label{conc}
 \{\mu, \{N' , \omega \} + \{N , \omega ' \} \}=0
\end{equation}
holds.
If $(\pi, \omega')$ is a $P \Omega$ structure, then $$\omega'_{\hat{N}}=\omega'_{\pi^{\#} \circ (\omega')^{\flat}}= \omega' \circ \pi^{\#} \circ (\omega')^{\flat}=-\omega \circ (\pi')^{\#} \circ (\omega')^{\flat}=- \omega_{N'} $$ is closed. From (\ref{conc}), we get that $\omega'_N$ is closed and since $\omega'_N= - \omega_{(\pi')^{\#} \circ \omega^{\flat}}$, the pair $(\pi', \omega)$ is a $P \Omega$ structure.
Conversely, if $(\pi', \omega)$ is a $P \Omega$ structure so is $(\pi, \omega')$.

For the second part, notice that the four pairs $(\pi, \omega)$, $(\pi', \omega')$,
$(\pi, \omega')$ and $(\pi', \omega)$ being $P \Omega$ structures, the $2$-forms
\begin{equation} \label{2forms}
\omega_N, \,\,\, \omega'_{N'}, \,\,\,\omega'_{\hat{N}}=-\omega_{N'}, \,\,\, \omega_{\hat{N}}=\omega'_N \,\,\,{\textrm{are closed}}.
\end{equation}
Also, because the $P \Omega$ structures $(\pi, \omega)$ and $(\pi', \omega')$ are compatible, we have ${\mathcal{N}}_\mu(J_\pi,J_{\pi'})=0$.

 We prove  (the other cases are similar):
 \begin{enumerate}
 \item [i)]
 $(\pi, \omega)$ and $(\pi, \omega')$ are compatible;
 \item [ii)]
 $(\pi, \omega)$ and $(\pi', \omega)$ are compatible;
 \item [iii)]
  $(\pi, \omega')$ and $(\pi', \omega)$ are compatible.
 \end{enumerate}

Case i): We  have $ \pi^{\#} \circ (\omega + \omega')^{\flat}=N + \hat{N}$ and by (\ref{2forms}), the $2$-form $(\omega + \omega')_{N + \hat{N}}$ is closed.

Case ii): In this case, $( \pi + \pi')^{\#} \circ \omega^{\flat}=N - \hat{N}$ and by (\ref{2forms}), the $2$-form $\omega_{N - \hat{N}}$ is closed.

Case iii): We  have $ (\pi + \pi')^{\#} \circ (\omega + \omega')^{\flat}=N +N'$ and by (\ref{2forms}), the $2$-form $(\omega + \omega')_{N+N'}$ is closed.
 \end{proof}

There are several interesting relations between the structures on Lie algebroids considered so far \cite{antunes2}, \cite{YKSRubtsov}.  Some of them will be useful in the sequel.

\begin{prop}  \label{relations_struct}
Let $\pi$ and $\omega$ be, respectively,  a Poisson bivector and a $2$-form on a Lie algebroid $(A,\mu)$ and consider the $(1,1)$-tensor $N= \pi^\# \circ \omega^\flat$.
\begin{enumerate}
\item[(i)]
If $(\pi, \omega)$ is a $P\Omega$ structure on $(A,\mu)$, then $(\pi, N)$ is a PN structure on $(A,\mu)$.
\item[(ii)]
If $(\pi, \omega)$ is a $P\Omega$ structure on $(A,\mu)$, then $(\omega, N)$ is an $\Omega N$ structure on $(A,\mu)$.
\item[(iii)]
The pair $(\pi, \omega)$ is a $P\Omega$ structure on $(A,\mu)$ if and only if $\omega$ is a closed complementary form of $\pi$ on $(A,\mu)$.
\end{enumerate}
\end{prop}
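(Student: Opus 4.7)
The plan is to start with part (iii), which is the cleanest, and then use the identities it exposes to attack (i) and (ii). Throughout, I would work in the big bracket, where the three $P\Omega$ hypotheses read $\{\pi,\{\pi,\mu\}\}=0$, $\{\mu,\omega\}=0$, and $\{\mu,\omega_N\}=0$, and the identification $J_N=\{\omega,\pi\}$ (used implicitly in the proof of Proposition~\ref{P_OmegaA}) lets me express everything involving $N$ as iterated brackets of $\omega$ and $\pi$ against $\mu$.

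For (iii), I translate the complementary-form condition into the big bracket: $[\omega,\omega]_{\mu_\pi}=\{\omega,\{\omega,\{\pi,\mu\}\}\}$. Using graded Jacobi and $d_\mu\omega=\{\mu,\omega\}=0$, I would first show $\{\omega,\{\pi,\mu\}\}=\{\{\omega,\pi\},\mu\}=\mu_N$, and then $\{\omega,\mu_N\}=\{\{\omega,N\},\mu\}+\{N,\{\omega,\mu\}\}=2\{\omega_N,\mu\}=-2\,d_\mu\omega_N$, invoking the identity $\omega_N=\tfrac12\{N,\omega\}$ used in the proof of Proposition~\ref{omegaN}. Hence $[\omega,\omega]_{\mu_\pi}=-2\,d_\mu\omega_N$, and, given that $\pi$ is Poisson and $d_\mu\omega=0$, being a closed complementary form is equivalent to being a $P\Omega$ structure.

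For (i) and (ii), the algebraic intertwinings $N\circ\pi^\#=\pi^\#\circ N^*$ and $\omega^\flat\circ N=N^*\circ\omega^\flat$ follow immediately from $N=\pi^\#\circ\omega^\flat$ together with $(\pi^\#)^*=-\pi^\#$ and $(\omega^\flat)^*=-\omega^\flat$. The bivector $\pi$ is Poisson and $\omega$ is closed by hypothesis. By Proposition~\ref{P_OmegaA}, the pair $(J_\omega,J_N)$ is already compatible on $(A\oplus A^*,\mu)$. Thus, once $N$ is shown to be Nijenhuis, Proposition~\ref{omegaN} directly yields (ii), and the analogous $PN$-characterization (via $(J_\pi,J_N)$ compatible) yields (i) provided $C_\mu(\pi,N)=0$.

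The principal obstacle is therefore the big-bracket verification of $\mathcal T_\mu N=0$ and $C_\mu(\pi,N)=0$ from the three vanishing conditions above. Using $J_N=\{\omega,\pi\}$, I would apply graded Jacobi to $\{J_N,\{J_N,\mu\}\}$ and to $\{N,\{\pi,\mu\}\}+\{\pi,\{N,\mu\}\}$; since $\mu_N=\{\omega,\mu_\pi\}$, each intermediate term collapses, either by the Poisson identity $\{\pi,\mu_\pi\}=0$, by $\{\mu,\omega\}=0$, or ultimately by $\{\mu,\omega_N\}=0$ after rewriting as a double bracket of $\omega$ against $\mu_N$. No new input beyond the $P\Omega$ conditions is required, but careful bookkeeping of signs in the graded Jacobi identity (all three of $\omega$, $\pi$, $\mu$ have odd total degree) is the delicate bit that has to be executed without error.
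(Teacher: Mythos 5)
The paper offers no proof of Proposition~\ref{relations_struct} --- it is quoted from \cite{antunes2} and \cite{YKSRubtsov} --- so your argument has to stand on its own, and it does. Part (iii) is exactly the identity $[\omega,\omega]_{\mu_\pi}=\{\omega,\{\omega,\mu_\pi\}\}=\{\omega,\mu_N\}=\pm 2\,d_\mu\omega_N$, valid once $d_\mu\omega=0$ (which gives $\{\omega,\mu_\pi\}=\{\{\omega,\pi\},\mu\}=\mu_N$); this is a rearrangement of the paper's own computation~(\ref{pi_omega}), and the intertwining relations you derive from $(\pi^\#)^*=-\pi^\#$ and $(\omega^\flat)^*=-\omega^\flat$ are correct. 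The only place where you assert rather than show --- that the Jacobi expansions of $C_\mu(\pi,N)$ and ${\mathcal T}_\mu N$ ``collapse'' --- is genuinely the crux, so let me confirm it goes through. From $\{\pi,\mu_\pi\}=0$ and $\{\omega,\mu_\pi\}=\mu_N$ one gets $\{N,\mu_\pi\}=\{\{\omega,\pi\},\mu_\pi\}=-\{\pi,\mu_N\}$, hence $C_\mu(\pi,N)=\{N,\mu_\pi\}+\{\pi,\mu_N\}=0$ (using only $\pi$ Poisson and $d_\mu\omega=0$). For the torsion, write $2{\mathcal T}_\mu N=\mu_{N,N}-\mu_{N^2}$; since $d_\mu\omega_N=0$ forces $\{\omega,\mu_N\}=0$, one has $\mu_{N,N}=\{\{\omega,\pi\},\mu_N\}=\{\omega,\{\pi,\mu_N\}\}$, while combining $\{\pi,\mu_N\}=\{\{\pi,N\},\mu\}+\{N,\mu_\pi\}$ with $\{N,\mu_\pi\}=-\{\pi,\mu_N\}$ gives $2\{\pi,\mu_N\}=\{\{\pi,N\},\mu\}$; finally $\{\omega,\{\pi,N\}\}=\{\pi,\{\omega,N\}\}=\mp 2\{\pi,\omega_N\}=2N^2$ (the pattern $\{\omega,\pi\}=N$ applied to $\omega_N$, whose contraction with $\pi^\#$ is $N^2$), so $\mu_{N,N}=\{N^2,\mu\}$ and ${\mathcal T}_\mu N=0$. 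With these two facts, your appeal to Proposition~\ref{P_OmegaA}, Proposition~\ref{omegaN} and the $PN$ analogue correctly closes (i) and (ii). So the proposal is complete in outline; I would only insist that the torsion computation above be written out, since it is the one step that is not a formal consequence of results already proved in the paper.
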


Under the conditions of Corollary~\ref{cor_plicas} we have, from Proposition~\ref{relations_struct}, that the pairs
\begin{itemize}
\item $(\omega, \hat{N})$, $(\omega', \hat{N})$, $(\omega, N')$ and $(\omega', N)$ are $\Omega N$ structures on $(A,\mu)$;
\item $(\pi, \hat{N})$, $(\pi', \hat{N})$, $(\pi, N')$ and $(\pi', N)$ are $PN$ structures on $(A,\mu)$.
\end{itemize}

\

Now, we treat the compatibility of complementary forms on $(A, \mu)$.
Let $\pi$ be a Poisson bivector on a Lie algebroid $(A, \mu)$ and consider the Courant algebroid $(A^* \oplus A, \mu_\pi)$.

\begin{defn}
Two complementary forms of $\pi$, $\omega$ and $\omega'$, are said to be {\em compatible} if $\omega + \omega'$ is a complementary form of $\pi$.
\end{defn}

\begin{prop}  \label{compatible_complementary}
Two complementary forms of $\pi$, $\omega$ and $\omega'$, are compatible if and only if  $ {\mathcal{N}}_{\mu_{\pi}} (J_{\omega},J_{\omega'})=0$.
\end{prop}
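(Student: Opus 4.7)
The proof plan is to reduce the question to the Nijenhuis characterization of complementary forms established in Proposition~\ref{compl_forms} and then apply the additivity formula~(\ref{torsion_sum}) for the torsion of a sum of $(1,1)$-tensors. By Proposition~\ref{compl_forms}, a $2$-form $\alpha$ is a complementary form of $\pi$ on $(A,\mu)$ if and only if $J_\alpha$ is a Nijenhuis tensor on the Courant algebroid $(A^*\oplus A,\mu_\pi)$. Applying this criterion to $\omega+\omega'$, the compatibility of $\omega$ and $\omega'$ is equivalent to ${\mathcal T}_{\mu_\pi}(J_{\omega+\omega'})=0$, and since $J_{\omega+\omega'}=J_\omega+J_{\omega'}$, this is the vanishing of ${\mathcal T}_{\mu_\pi}(J_\omega+J_{\omega'})$.

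Next I would invoke identity~(\ref{torsion_sum}) on the Courant algebroid $(A^*\oplus A,\mu_\pi)$:
\begin{equation*}
{\mathcal T}_{\mu_\pi}(J_\omega+J_{\omega'})={\mathcal T}_{\mu_\pi}J_\omega+{\mathcal T}_{\mu_\pi}J_{\omega'}+{\mathcal N}_{\mu_\pi}(J_\omega,J_{\omega'}).
\end{equation*}
Using Proposition~\ref{compl_forms} again, the hypothesis that both $\omega$ and $\omega'$ are complementary forms of $\pi$ forces ${\mathcal T}_{\mu_\pi}J_\omega=0$ and ${\mathcal T}_{\mu_\pi}J_{\omega'}=0$. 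The displayed equation therefore collapses to
\begin{equation*}
{\mathcal T}_{\mu_\pi}(J_\omega+J_{\omega'})={\mathcal N}_{\mu_\pi}(J_\omega,J_{\omega'}),
\end{equation*}
from which the stated equivalence is immediate.

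There is essentially no obstacle: the proof is a direct two-line calculation that relies only on Proposition~\ref{compl_forms} (already proved as a dual version of Proposition~\ref{pi_Nijenhuis}) and on the tensorial identity~(\ref{torsion_sum}), which was recorded in Section~2. The only point to be mindful of is that the ambient Courant algebroid is $(A^*\oplus A,\mu_\pi)$ rather than $(A\oplus A^*,\mu)$, so all torsions and Nijenhuis concomitants must be taken with respect to $\mu_\pi$; no extra identification or compatibility check is needed beyond that.
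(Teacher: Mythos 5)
Your proposal is correct and follows essentially the same route as the paper: the authors likewise apply Proposition~\ref{compl_forms} to $\omega+\omega'$, expand ${\mathcal T}_{\mu_\pi}(J_\omega+J_{\omega'})$ via the identity~(\ref{torsion_sum}), and use the vanishing of the individual torsions to reduce everything to ${\mathcal N}_{\mu_\pi}(J_\omega,J_{\omega'})$. Your remark about working on $(A^*\oplus A,\mu_\pi)$ rather than $(A\oplus A^*,\mu)$ is exactly the right point of care, and nothing further is needed.
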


\begin{proof}
From Proposition \ref{compl_forms}, the complementary forms $\omega$ and $\omega'$ are compatible if and only if
\begin{equation}  \label{compl_compatible}
{\mathcal{T}}_{\mu_{\pi}} J_{\omega + \omega'}={\mathcal{T}}_{\mu_{\pi}} (J_{\omega}+J_{\omega'})= 0.
\end{equation}
Since ${\mathcal{T}}_{\mu_{\pi}} (J_{\omega}+J_{\omega'})= {\mathcal{T}}_{\mu_{\pi}} J_{\omega} + {\mathcal{T}}_{\mu_{\pi}} J_{\omega'} +
 {\mathcal{N}}_{\mu_{\pi}} (J_{\omega},J_{\omega'})$ and ${\mathcal{T}}_{\mu_{\pi}} J_{\omega}={\mathcal{T}}_{\mu_{\pi}} J_{\omega'}=0$, (\ref{compl_compatible})
 is equivalent to $ {\mathcal{N}}_{\mu_{\pi}} (J_{\omega},J_{\omega'})=0$.
\end{proof}


\section{Structures on deformed Lie algebroids}
We start by proving that  if a pair of tensors defines a certain structure on a Lie algebroid, this pair defines the same structure  for a whole hierarchy of deformed Lie algebroids.

It is well known that if $N$ is a Nijenhuis tensor on a Lie algebroid $(A,\mu)$ then $(A,\mu_N)$ is also a Lie algebroid.
When the Lie algebroid
structure $\mu$ is successively deformed by the same $(1,1)$-tensor $N$, we use the following notation:
\begin{equation}\label{notation_Theta_I_n}
\mu_{N^{[k]}}= \mu_{{\scriptsize N,\stackrel{k}{\dots}, N}}, \textrm{ for }k \geq 1, \textrm{ and }\mu_{N^{[0]}}= \mu.
\end{equation}

\begin{lem}[\cite{magriYKS}]\label{lema_YKS_N_k_hierarchy}
Let $N$ be a Nijenhuis tensor on a Lie algebroid $(A,\mu)$. Then, for all $n, k \in \Nn$,
\begin{enumerate}
  \item[(i)] the $(1,1)$-tensor $N^n$ is Nijenhuis with respect to $\mu_{N^k}$;
  \item[(ii)] the Lie algebroid structures $\mu_{N^k}$ and $\mu_{N^{[k]}}$ on $A$ coincide.
\end{enumerate}
\end{lem}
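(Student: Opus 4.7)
The main idea is to reduce both assertions to the Lie-algebroid analogue of \eqref{supergeometric_torsion},
\begin{equation*}
  2\,\mathcal{T}_\mu N \;=\; \mu_{N,N} - \mu_{N^2},
\end{equation*}
which is verified by expanding both sides in the Lie bracket and recognising the classical Nijenhuis torsion; hence the hypothesis rewrites in big-bracket form as $\{N,\{N,\mu\}\} = \{N^2,\mu\}$. Two standing facts will be used throughout: $\{N^a, N^b\} = 0$ for all $a, b \ge 0$, since in local coordinates $\{M_1, M_2\} = M_2 \circ M_1 - M_1 \circ M_2$ for two $(1,1)$-tensors on $A$ and powers of $N$ commute; and $\{id, \mu\} = \mu$, by the formula $\{id, u\} = (q-p)u$ for $u \in \mathcal{F}^{p,q}$, which takes care of $N^0 = id$.

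For (ii) I would argue by strong induction on $k$. The cases $k = 0, 1$ are immediate and $k = 2$ is precisely the hypothesis. For the step, the inductive hypothesis gives $\mu_{N^{[k+1]}} = \{N, \mu_{N^{[k]}}\} = \{N, \mu_{N^k}\} = \mu_{N^k, N}$, so it suffices to show $\mu_{N^k, N} = \mu_{N^{k+1}}$. Expanding both sides in the Lie bracket and regrouping, one exhibits the discrepancy $\mu_{N^k, N} - \mu_{N^{k+1}}$ as a $\Zz$-linear combination of terms $N^a\,\mathcal{T}_\mu N(N^b\,\cdot,\, N^c\,\cdot)$ with $a + b + c = k - 1$, each vanishing by hypothesis; an equivalent route proceeds via the graded Jacobi identity together with $\{N^a, N^b\} = 0$ and iterative use of the Nijenhuis identity at the innermost slot of the nested bracket. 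As a by-product, one verifies that $N$ remains Nijenhuis on $(A, \mu_{N^{[k+1]}})$ by a computation analogous to the $k = 2$ base case.

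Assertion (i) then follows quickly from (ii). From the torsion formula
\begin{equation*}
  2\,\mathcal{T}_{\mu_{N^k}} N^n \;=\; (\mu_{N^k})_{N^n, N^n} - (\mu_{N^k})_{N^{2n}},
\end{equation*}
both terms on the right reduce to $\mu_{N^{k+2n}}$ via (ii) combined with graded Jacobi and $\{N^a, N^b\} = 0$, so the torsion vanishes. The main obstacle is the inductive step for (ii): the discrepancy $\mu_{N^k, N} - \mu_{N^{k+1}}$ must be recognised as a combination of torsion evaluations of $N$, which is transparent for small $k$ (one verifies $\mu_{N, N} - \mu_{N^2} = 2\,\mathcal{T}_\mu N$ and similarly for $k=2$) but demands a careful recursive organisation in general.
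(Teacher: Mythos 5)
The paper offers no proof of this lemma: it is quoted from \cite{magriYKS} and used as a black box, so there is no internal argument to compare yours against. Your route is the standard one from that reference, and the identities you build on are sound: on a Lie algebroid the analogue of (\ref{supergeometric_torsion}) reads $2\,\mathcal{T}_\mu N=\mu_{N,N}-\mu_{N^2}$ with no hypothesis on $N^2$ (the restriction $I^2=\lambda\, id$ is only needed in the Courant setting, where $I^2$ fails to be skew-symmetric); $\{N^a,N^b\}=0$ together with graded Jacobi does give $\mu_{N^a,N^b}=\mu_{N^b,N^a}$; and your verifications of the discrepancy formula for $k=1,2$ are correct.

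Two points remain unproved, and they are the crux. First, the identity writing $\mu_{N^k,N}-\mu_{N^{k+1}}$ as a $\Zz$-combination of terms $N^a\,\mathcal{T}_\mu N(N^b\cdot,N^c\cdot)$ is asserted for general $k$; you check it for $k\le 2$ and concede that the general case ``demands a careful recursive organisation''. That organisation \emph{is} the content of the lemma, so as written the induction for (ii) is not closed. Second, the passage from (ii) to (i): statement (ii) only controls iterated deformations by $N$ itself, whereas $(\mu_{N^k})_{N^n,N^n}$ is a deformation by the power $N^n$; identifying it with $\mu_{N^{k+2n}}$ requires $\mu_{N^a,N^b}=\mu_{N^{a+b}}$ for arbitrary $a,b$ (equivalently $\mathcal{N}_\mu(N^a,N^b)=0$), which is strictly stronger than (ii) and is in fact the form in which the paper later uses the result (proof of Proposition \ref{PN_hier_compatible}, quoting $[X,Y]_{N^n,N^m}=[X,Y]_{N^n\circ N^m}$ from \cite{anutuneslaurentndc}). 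Your ``by-product'' remark --- that $N$ stays Nijenhuis on each $(A,\mu_{N^{[k]}})$ --- is the right repair: if that claim is carried as an explicit clause of the induction hypothesis, then (ii) applied over the base $(A,\mu_{N^k})$ gives $(\mu_{N^k})_{N^n}=\mu_{N^{k+n}}$ and the torsion computation for (i) closes. As written, however, it is a remark rather than a proved step, and both gaps sit exactly where the real work of \cite{magriYKS} lies.
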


Before proceeding, we make a simple observation: if $\omega$ is a $2$-form and $T_0, T_1, \cdots, T_k$, $k \in \Nn$, are  $(1,1)$-tensors on a Lie algebroid, then
\begin{equation} \label{deformation_omega}
(((\omega \,_{T_{0}})_{T_{1}})_{\cdots})_{T_{k}}= \omega \,_{T_0 \circ T_1 \circ \cdots \circ T_k}.
\end{equation}

A direct computation gives the following:
\begin{lem}\label{lema_commut_omegaNn_Nk}
Let $\omega$ and $N$ be, respectively, a $2$-form and a $(1,1)$-tensor on $(A,\mu)$ such that $\omega^\flat \circ N= N^* \circ \omega^\flat$. Then,
 $(\omega_{N^n})^\flat\circ N^m=(N^m)^*\circ(\omega_{N^n})^\flat$, for all $m,n \in  \Nn$.
\end{lem}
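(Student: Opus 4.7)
The plan is to reduce the claim to a single inductive statement: \textbf{($\ast$)} $\omega^\flat \circ N^k = (N^k)^* \circ \omega^\flat$ for every $k \in \Nn$. Once ($\ast$) is in hand, the asserted identity $(\omega_{N^n})^\flat\circ N^m=(N^m)^*\circ(\omega_{N^n})^\flat$ will follow by a short direct computation.

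First I would record the formula $(\omega_{N^n})^\flat = \omega^\flat \circ N^n$. This is just the iterated form of the defining identity $\omega_N^\flat = \omega^\flat \circ N$, and it is exactly equation (\ref{deformation_omega}) applied with $T_0=T_1=\cdots=T_{n-1}=N$.

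Next I would prove ($\ast$) by induction on $k$. The base case $k=0$ is trivial and $k=1$ is precisely the hypothesis $\omega^\flat \circ N = N^* \circ \omega^\flat$. For the inductive step, assuming $\omega^\flat \circ N^k = (N^k)^* \circ \omega^\flat$, I would compute
\begin{equation*}
\omega^\flat \circ N^{k+1} \;=\; (\omega^\flat \circ N^k)\circ N \;=\; (N^k)^* \circ \omega^\flat \circ N \;=\; (N^k)^* \circ N^* \circ \omega^\flat \;=\; (N^{k+1})^*\circ \omega^\flat,
\end{equation*}
using the hypothesis once in the first equality and once more in the second.

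Finally, applying ($\ast$) with $k=n+m$ on both sides, I would get
\begin{equation*}
(\omega_{N^n})^\flat \circ N^m \;=\; \omega^\flat \circ N^{n+m} \;=\; (N^{n+m})^*\circ \omega^\flat \;=\; (N^m)^*\circ (N^n)^*\circ \omega^\flat \;=\; (N^m)^*\circ (\omega_{N^n})^\flat,
\end{equation*}
which is the required identity. There is really no main obstacle here; the only subtlety is noticing that one wants the single-variable induction ($\ast$) rather than a two-variable induction on $(m,n)$, after which the result drops out from composing the identity in the two different orders.
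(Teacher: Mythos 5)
Your proof is correct and is exactly the ``direct computation'' the paper alludes to without writing out: the single-variable identity $\omega^\flat\circ N^k=(N^k)^*\circ\omega^\flat$ by induction, combined with $(\omega_{N^n})^\flat=\omega^\flat\circ N^n$ from (\ref{deformation_omega}) and $(N^{n+m})^*=(N^m)^*\circ(N^n)^*$. Nothing to add.
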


As a consequence of the Lemma above, $(\omega_{N^n})_{N^m}=\omega_{N^{n+m}}$ is a $2$-form and
\begin{equation}\label{omega_Nn_Nm_in_BB}
   \omega_{N^{n+m}}=\frac{1}{2}\{N^m, \omega_{N^n}\},
\end{equation}
for all $m, n \in \Nn$.

\begin{prop} \label{closed_omegaNn}
Let $(\omega, N)$ be an  $\Omega N$ structure on a Lie algebroid $(A,\mu)$. Then, the $2$-form $\omega_{N^n}$ is closed on the Lie algebroid $(A, \mu_{N^{[k]}})$, for all
 $n, k \in \Nn$.
\end{prop}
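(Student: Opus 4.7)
The plan is to translate the statement into supergeometric language and reduce to a single inductive claim. By Lemma~\ref{lema_YKS_N_k_hierarchy}(ii) we may replace $\mu_{N^{[k]}}$ by $\mu_{N^k}$, so the target identity becomes $c_{k,n} := \{\mu_{N^k}, \omega_{N^n}\} = 0$ for all $k, n \in \Nn$. A first application of graded Jacobi to $\{\{N^k, \mu\}, \omega_{N^n}\}$, combined with $\{\mu, \omega\} = 0$ and the identity $\{N^k, \omega_{N^n}\} = 2\,\omega_{N^{n+k}}$ coming from (\ref{omega_Nn_Nm_in_BB}), expresses $c_{k,n}$ as a combination of $\{N^k, c_{0,n}\}$ and $\pm 2\, c_{0, n+k}$. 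Consequently, it suffices to establish $c_{0, m} = \{\mu, \omega_{N^m}\} = 0$ for every $m \geq 0$.

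For the latter I would use strong induction on $m$, with the strengthened hypothesis that $c_{k, j} = 0$ whenever $k + j < m$; note that the base cases $m = 0, 1$ are exactly $d_\mu \omega = 0$ and $d_\mu \omega_N = 0$, while $c_{1,0}=\pm 2 c_{0,1}=0$ follows from the first reduction. For the inductive step $m \geq 2$, the recursion $\omega_{N^j} = \tfrac{1}{2}\{N, \omega_{N^{j-1}}\}$, graded Jacobi, and the identity $\{\mu_{N^k}, N\} = \pm\, \mu_{N^{k+1}}$ (which combines graded antisymmetry with the iterated deformation formula and Lemma~\ref{lema_YKS_N_k_hierarchy}(ii)) produce a one-step recursion $c_{k, j} = \pm\, \tfrac{1}{2}\, c_{k+1, j-1}$, with the secondary Jacobi term $\tfrac{1}{2}\{N, c_{k, j-1}\}$ vanishing by the strengthened IH since $k + (j-1) = m - 1 < m$. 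Iterating along the staircase $(0, m) \to (1, m-1) \to \cdots \to (m, 0)$ gives $c_{0, m} = \pm\, 2^{-m}\, c_{m, 0}$, and combining with the first reduction applied to $(k, n) = (m, 0)$, namely $c_{m, 0} = \pm 2\, c_{0, m}$, yields $c_{0, m} = \pm\, 2^{-(m-1)}\, c_{0, m}$. Since $2^{-(m-1)} < 1$ for $m \geq 2$, the scalar factor cannot equal $\pm 1$, which forces $c_{0, m} = 0$ and closes the induction.

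The main obstacle is purely bookkeeping: tracking the signs produced by graded Jacobi and graded antisymmetry of the (degree $-2$) big bracket through each reduction, and verifying that all secondary Jacobi terms vanish under the strengthened inductive hypothesis. Once $c_{0, m} = 0$ is known for every $m$, the first reduction immediately gives $c_{k,n}=0$ for all $k,n$, completing the proof.
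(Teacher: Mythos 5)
Your proof is correct, but it takes a genuinely different route from the paper's. The paper runs two nested inductions: first, for $k=0$, an induction on $n$ whose step assumes \emph{both} $\omega_{N^{r-1}}$ and $\omega_{N^r}$ closed, applies $\{N,\cdot\}$ to the identity $\{\{N,\mu\},\omega_{N^{r-1}}\}=0$, and uses the vanishing torsion in the form $\{N,\{N,\mu\}\}=\{N^2,\mu\}$ to arrive at $0=-6\,d_\mu(\omega_{N^{r+1}})$; second, an induction on $k$ that propagates this to $\mu_{N^{[k]}}$. You instead run a single strong induction on the weight $k+n$ of $c_{k,n}=\{\mu_{N^k},\omega_{N^n}\}$, feeding the Nijenhuis property in through the identity $\{N,\mu_{N^k}\}=\mu_{N^{k+1}}$ (Lemma~\ref{lema_YKS_N_k_hierarchy}(ii)) and closing with the relation $c_{0,m}=\lambda\,c_{0,m}$, $|\lambda|=2^{-(m-1)}<1$. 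Both arguments rest on the same two inputs --- the torsion-free condition on $N$ and the compatibility $\omega^\flat\circ N=N^*\circ\omega^\flat$ giving $\omega_{N^j}=\tfrac12\{N,\omega_{N^{j-1}}\}$ --- and both end with a ``nonzero scalar'' trick (the paper's $0=6X$ versus your $X=\lambda X$ with $\lambda\neq 1$); your version treats $n$ and $k$ simultaneously and makes the role of the weight transparent, at the cost of the sign bookkeeping you flag, which is indeed harmless since only $|\lambda|<1$ is needed. One organizational point to fix: the staircase at level $m$ uses \emph{all} of level $m-1$ as secondary terms, so to close the strong induction you must upgrade $c_{0,m}=0$ to $c_{k,j}=0$ for every $k+j=m$ \emph{inside} the inductive step (via your first reduction, using $c_{0,j}=0$ for $j<m$), rather than only at the very end; as written this is easily repaired and does not affect correctness.
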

\begin{proof}
Let $(\omega, N)$ be an  $\Omega N$ structure on $(A,\mu)$. First, we prove the statement for $k=0$, i.e., the $2$-form $\omega_{N^n}$ is closed with respect to $\mu$, for all
$n \in \Nn$. This is done by induction on $n$.
By hypothesis, the $2$-forms $\omega$ and $\omega_N$ are closed with respect to $\mu$. Let us suppose that, for some $r \in \Nn$, the $2$-forms $\omega_{N^{r-1}}$ and $\omega_{N^r}$ are closed with respect to $\mu$. Then, using (\ref{omega_Nn_Nm_in_BB}) and the Jacobi identity, we have
\begin{align*}
0&= \{\mu,\omega_{N^{r}} \} = \frac{1}{2}\{\mu,\{N,\omega_{N^{r-1}} \}\}= \frac{1}{2}\Big(\{ \{ \mu,N \}, \omega_{N^{r-1}} \}+ \{N, \{ \mu, \omega_{N^{r-1}} \}\}\Big)\\
&= -\frac{1}{2} \{\{ N, \mu \},\omega_{N^{r-1}}  \}.
\end{align*}
Applying $\{N, . \}$ to the last equation, using (\ref{omega_Nn_Nm_in_BB}) and the induction hypothesis, we get
\begin{align*}
0 &=\{ N,\{ \{ N, \mu \},\omega_{N^{r-1}} \} \}\\
&=\{ \{ N, \{ N, \mu \}\},\omega_{N^{r-1}}  \} +\{ \{ N, \mu \}, \{ N,\omega_{N^{r-1}} \} \}\\
&=\{ \{ N^2, \mu \}, \omega_{N^{r-1}} \} + 2\,\{ N,\{ \mu,\omega_{N^{r}} \} \} +2\,\{\{ N, \omega_{N^{r}} \}, \mu \}\\
&=\{ N^2,\{ \mu,\omega_{N^{r-1}} \} \} + \{\{ N^2, \omega_{N^{r-1}} \}, \mu \} + 4\, \{ \omega_{N^{r+1}},\mu \}\\
&=2\, \{ \omega_{N^{r+1}},\mu \}+ 4\, \{ \omega_{N^{r+1}},\mu \}\\
&=-6\,d_\mu(\omega_{N^{r+1}}),
\end{align*}
where we used, in the third equality, $\{ N, \{ N, \mu \}\}=\{ N^2, \mu \}$.
By induction, we conclude that $\d_\mu (\omega_{N^n})=0$, for all $n \in \Nn$.

Now, we prove the general statement, i.e., the $2$-form $\omega_{N^n}$ is closed with respect to $\mu_{N^{[k]}}$, for all $n, k \in \Nn$. This is done by induction on $k$.
For $k=0$, the statement is proved, in the first part of the proof, for all $n \in \Nn$. Let us suppose that, for some $s \in \Nn$ and for all $n \in \Nn$, the $2$-form $\omega_{N^n}$ is closed with respect to $\mu_{N^{[s]}}$. Applying the Jacobi identity, we have
\begin{align*}
    \d_{\mu_{N^{[s+1]}}} (\omega_{N^n})&=\{\{N,\mu_{N^{[s]}}\}, \omega_{N^n}\}
    =\{N,\{\mu_{N^{[s]}}, \omega_{N^n}\}\} + \{\{N,\omega_{N^n}\}, \mu_{N^{[s]}}\}\\
    &=2\,\{\omega_{N^{n+1}}, \mu_{N^{[s]}}\}
    =0,
\end{align*}
where we used twice the induction hypothesis. Therefore, for all $n \in \Nn$, the $2$-form $\omega_{N^n}$ is closed with respect to $\mu_{N^{[s+1]}}$ and this completes the proof of the general statement.
\end{proof}

\begin{thm} \label{struct_mu_k}
Let $\pi$, $\omega$ and $N$ be, respectively,  a bivector, a $2$-form and a $(1,1)$-tensor on a vector bundle $A$, and set $T=\pi^\# \circ \omega^\flat$.
\begin{enumerate}
\item[(i)]
The pair $(\pi, N)$ is a $P N$ structure on the Lie algebroid $(A,\mu)$ if and only if it is a $P N$ structure on the Lie algebroid $(A,\mu_{N^{[k]}})$, for all
$k \in \Nn$.
\item[(ii)]
The pair $(\omega, N)$ is an $\Omega N$ structure on the Lie algebroid $(A,\mu)$ if and only if it is an $\Omega N$ structure on the Lie algebroid
$(A,\mu_{N^{[k]}})$, for all $k \in \Nn$.
\item[(iii)]
The pair $(\pi, \omega)$ is a $P\Omega$ structure on the Lie algebroid $(A,\mu)$ if and only if  it is a $P\Omega$ structure on the Lie algebroid
$(A,\mu_{T^{[k]}})$, for all $k \in \Nn$.
\item[(iv)]
The $2$-form $\omega$ is a closed complementary form of $\pi$ on the Lie algebroid $(A,\mu)$ if and only if it is a closed complementary form of $\pi$
on the Lie algebroid $(A,\mu_{T^{[k]}})$, for all $k \in \Nn$.
\end{enumerate}
\end{thm}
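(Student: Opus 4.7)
The ``only if'' direction in each of (i)--(iv) is immediate by specializing to $k=0$. For the ``if'' direction, my plan is to handle (ii) as a direct consequence of the results already established in this section, deduce (iii) and (iv) from (i) and (ii) via Proposition \ref{relations_struct}, and tackle (i) by induction on $k$ in the big bracket.

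Part (ii) is essentially a packaging exercise: if $(\omega,N)$ is an $\Omega N$ structure on $(A,\mu)$, then among the four conditions (\ref{Omega_N}) to verify on $(A,\mu_{N^{[k]}})$, the identity $\omega^\flat\circ N=N^*\circ\omega^\flat$ is independent of $\mu$, the Nijenhuis condition ${\mathcal T}_{\mu_{N^{[k]}}} N=0$ is Lemma \ref{lema_YKS_N_k_hierarchy}(i), and the two closedness conditions $\d_{\mu_{N^{[k]}}}\omega=0$ and $\d_{\mu_{N^{[k]}}}\omega_N=0$ are the cases $n=0$ and $n=1$ of Proposition \ref{closed_omegaNn}. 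For (iii), start from a $P\Omega$ structure $(\pi,\omega)$ on $(A,\mu)$; by Proposition \ref{relations_struct}(i)--(ii) the pair $(\pi,T)$ is $PN$ and $(\omega,T)$ is $\Omega N$ on $(A,\mu)$. Applying part (i) to $(\pi,T)$ gives that $\pi$ is Poisson with respect to $\mu_{T^{[k]}}$, and applying part (ii) to $(\omega,T)$ gives that $\omega$ and $\omega_T$ are closed with respect to $\mu_{T^{[k]}}$. These three conclusions are exactly (\ref{P_Omega}), so $(\pi,\omega)$ is a $P\Omega$ structure on $(A,\mu_{T^{[k]}})$. Part (iv) then follows from (iii) and the equivalence in Proposition \ref{relations_struct}(iii), which identifies closed complementary forms of $\pi$ with $P\Omega$ structures.

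The substance is therefore in part (i). The Nijenhuis condition ${\mathcal T}_{\mu_{N^{[k]}}} N = 0$ is Lemma \ref{lema_YKS_N_k_hierarchy}(i) and the algebraic identity $N\circ\pi^\#=\pi^\#\circ N^*$ is intrinsic, so I need to establish
\[
\{\pi,\{\pi,\mu_{N^{[k]}}\}\}=0 \qquad\text{and}\qquad C_{\mu_{N^{[k]}}}(\pi,N)=0
\]
for every $k$, by induction starting from $k=0$. Using Lemma \ref{lema_YKS_N_k_hierarchy}(ii), the step $k \mapsto k+1$ reduces to manipulating $\mu_{N^{[k+1]}}=\{N,\mu_{N^{[k]}}\}$ via the graded Jacobi identity; the key simplification is that the anti-commutation $J_\pi\circ J_N+J_N\circ J_\pi=0$ translates, in the big bracket, into the vanishing of $\{\pi,N\}$, which causes the potentially obstructing cross-terms to disappear and leaves only multiples of the inductive hypothesis. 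The main obstacle will be the careful bookkeeping in this Jacobi expansion, specifically verifying that after the cross-terms involving $\{\pi,N\}$ collapse, the surviving expressions are proportional to $\{\pi,\{\pi,\mu_{N^{[k]}}\}\}$ and $C_{\mu_{N^{[k]}}}(\pi,N)$ at the previous level, both of which vanish by induction.
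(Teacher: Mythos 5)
Your treatment of parts (ii)--(iv) is correct and coincides with the paper's: (ii) is exactly Lemma \ref{lema_YKS_N_k_hierarchy} together with the cases $n=0,1$ of Proposition \ref{closed_omegaNn}, and (iii), (iv) are deduced from Proposition \ref{relations_struct} precisely as in the paper (which invokes \cite{magriYKS} directly where you invoke part (i) --- the same thing). The ``only if'' directions by specializing to $k=0$ also match.

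The problem is part (i). The paper does not prove (i); it cites \cite{magriYKS}. You instead sketch an induction whose pivotal step rests on a false identification: the anti-commutation $J_\pi\circ J_N+J_N\circ J_\pi=0$ is \emph{not} equivalent to $\{\pi,N\}=0$. Functions of total degree $2$ correspond to skew-symmetric endomorphisms of $A\oplus A^*$, and under this correspondence the big bracket is the \emph{commutator}, not the anti-commutator. A direct matrix computation gives that $J_\pi J_N-J_N J_\pi$ has as its only nonzero block the map $-\bigl(N\circ\pi^\#+\pi^\#\circ N^*\bigr):A^*\to A$, so under the $PN$ condition $N\circ\pi^\#=\pi^\#\circ N^*$ one gets $\{\pi,N\}=\mp 2\,N\pi$, where $N\pi$ is the (generally nonzero) bivector with $(N\pi)^\#=N\circ\pi^\#$. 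This is consistent with the paper's own formula $\omega_N=\tfrac12\{N,\omega\}$, which is nonzero precisely under the analogous anti-commutation hypothesis $[J_\omega,J_N]_+=0$. Consequently, in your Jacobi expansion of $\{\pi,\{\pi,\{N,\mu_{N^{[k]}}\}\}\}$ and of $C_{\mu_{N^{[k+1]}}}(\pi,N)$ the cross-terms do not disappear: they produce terms involving the bivector $N\pi$, and closing the induction requires the genuinely nontrivial facts that $N\pi$ is Poisson, that $\pi$ and $N\pi$ are compatible, and the corresponding statements for the concomitant --- i.e., the actual content of the Kosmann-Schwarzbach--Magri hierarchy theorem. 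As written, your proof of (i) fails at that step; either cite \cite{magriYKS} as the paper does, or supply the missing lemmas on $N\pi$ before running the induction.
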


\begin{proof}
For each of the equivalences above, we only prove one implication since the other is obvious.
\begin{enumerate}
\item[(i)] It is a result from \cite{magriYKS}.
\item[(ii)] Consider $(\omega, N)$ an $\Omega N$ structure on the Lie algebroid $(A,\mu)$. The $(1,1)$-tensor $N$, which is Nijenhuis with respect to $\mu$, is still Nijenhuis with respect to $\mu_{N^{[k]}}$ (Lemma \ref{lema_YKS_N_k_hierarchy}). Moreover, from Proposition \ref{closed_omegaNn}, both $2$-forms $\omega$ and $\omega_N$ are closed with respect to $\mu_{N^{[k]}}, \forall k \in \Nn$. Therefore $(\omega, N)$ is an $\Omega N$ structure on the Lie algebroid $(A,\mu_{N^{[k]}})$, for all $k \in \Nn$.
\item[(iii)]Consider $(\pi, \omega)$ a $P\Omega$ structure on the Lie algebroid $(A,\mu)$. Then, from Proposition \ref{relations_struct}(i), $(\pi, T)$ is a Poisson Nijenhuis structure on $(A,\mu)$, so that $\pi$ is Poisson with respect to $\mu_{T^{[k]}}$ (\cite{magriYKS}). Using Proposition \ref{relations_struct}(ii), the pair $(\omega,T)$ is an $\Omega N$ structure on $(A,\mu)$ and, from Proposition \ref{closed_omegaNn}, both $2$-forms $\omega$ and $\omega_T$ are closed with respect to $\mu_{T^{[k]}}$. Therefore $(\pi, \omega)$ is a $P\Omega$ structure on the Lie algebroid $(A,\mu_{T^{[k]}})$, for all $k \in \Nn$.
\item[(iv)] Follows directly from Proposition \ref{relations_struct}(iii) and the equivalence (iii) above.
\end{enumerate}
\end{proof}


\section{Hierarchies of $P\Omega$ structures, $\Omega N$ structures and complementary forms}
Hierarchies of Poisson-Nijenhuis structures on  Lie algebroids were studied in \cite{magriYKS}. In this section, we
show that hierarchies of  $P\Omega$ structures, $\Omega N$ structures and complementary forms can also be constructed on Lie algebroids.
Although hierarchies of Hitchin pairs can also be defined, we will not discuss this case because the assumptions  are too
restrictive.

\

The next theorem gives a hierarchy of $P\Omega$ structures on Lie algebroids.

\begin{thm}  \label{hierarchy_POmega}
Let $(A,\mu)$ be a Lie algebroid, $\pi$ a Poisson bivector and $\omega$ a $2$-form  such that $(\pi,\omega)$ is a $P\Omega$ structure on $(A,\mu)$. Set $N= \pi^\# \circ \omega^\flat$. Then, $(N^n \pi, \omega_{N^m})$ is a $P\Omega$ structure on the Lie algebroid $(A,\mu_{N^{[k]}})$, for all $m, n, k \in \Nn$.
\end{thm}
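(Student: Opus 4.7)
The plan is to reduce the statement to three results already established in the paper: Proposition \ref{relations_struct}, the Magri--Kosmann-Schwarzbach hierarchy for $PN$ structures from \cite{magriYKS}, and Proposition \ref{closed_omegaNn}.

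First, I would unpack what must be verified. By definition, $(N^n \pi, \omega_{N^m})$ is a $P\Omega$ structure on $(A,\mu_{N^{[k]}})$ if and only if
\begin{enumerate}
\item[(a)] $N^n\pi$ is Poisson with respect to $\mu_{N^{[k]}}$;
\item[(b)] $\omega_{N^m}$ is $\mu_{N^{[k]}}$-closed;
\item[(c)] $(\omega_{N^m})_{N'}$ is $\mu_{N^{[k]}}$-closed, where $N'=(N^n\pi)^\sharp\circ(\omega_{N^m})^\flat$.
\end{enumerate}
A short computation identifies the auxiliary tensor $N'$: since $N=\pi^\sharp\circ\omega^\flat$,
$$
N'=N^n\circ\pi^\sharp\circ\omega^\flat\circ N^m = N^n\circ N\circ N^m = N^{n+m+1},
$$
so that $(\omega_{N^m})_{N'}=\omega_{N^{n+2m+1}}$ by (\ref{deformation_omega}).

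Next I would handle (a). By Proposition \ref{relations_struct}(i), the hypothesis that $(\pi,\omega)$ is a $P\Omega$ structure on $(A,\mu)$ forces $(\pi,N)$ to be a $PN$ structure on $(A,\mu)$. The $PN$ hierarchy result of \cite{magriYKS} (used already in the proof of Theorem \ref{struct_mu_k}(i)) then yields that $N^n\pi$ is a Poisson bivector on $(A,\mu_{N^{[k]}})$ for every $n,k\in\Nn$.

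For (b) and (c) I would invoke Proposition \ref{relations_struct}(ii), which gives that $(\omega,N)$ is an $\Omega N$ structure on $(A,\mu)$. Proposition \ref{closed_omegaNn} therefore applies and tells us that $\omega_{N^j}$ is closed with respect to $\mu_{N^{[k]}}$ for every $j,k\in\Nn$. Specializing $j=m$ yields (b) and $j=n+2m+1$ yields (c).

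I do not anticipate any serious obstacle: all the work has been done in the preceding sections, and the only computation needed is the identification $N'=N^{n+m+1}$, which rests on the defining identity $N=\pi^\sharp\circ\omega^\flat$ together with the commutation relations ensured by the $PN$ and $\Omega N$ structures of $(\pi,N)$ and $(\omega,N)$.
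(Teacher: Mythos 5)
Your proposal is correct and follows essentially the same route as the paper: the paper likewise obtains the Poisson condition for $N^n\pi$ from Proposition \ref{relations_struct}(i) together with the $PN$ hierarchy of \cite{magriYKS}, obtains closedness of $\omega_{N^m}$ from Proposition \ref{relations_struct}(ii) and Proposition \ref{closed_omegaNn}, and performs the identical computation $(N^n\pi)^\#\circ(\omega_{N^m})^\flat=N^{n+m+1}$ followed by $(\omega_{N^m})_{N^{n+m+1}}=\omega_{N^{2m+n+1}}$. The only cosmetic difference is that the paper first passes through Theorem \ref{struct_mu_k}(iii) to place the $P\Omega$ structure on the deformed algebroid, whereas you apply Proposition \ref{closed_omegaNn} directly from the undeformed $\Omega N$ structure, which is equally valid.
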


\begin{proof}
Let $(\pi,\omega)$ be a $P\Omega$ structure on $(A,\mu)$. Then, by Theorem~\ref{struct_mu_k} (iii), $(\pi,\omega)$ is a
$P\Omega$ structure on $(A,\mu_{N^{[k]}})$, for all $k \in \Nn$. From Proposition \ref{relations_struct} (ii), we know that $(\omega,N)$ is an $\Omega N$ structure on $(A,\mu_{N^{[k]}})$ and, by Proposition \ref{closed_omegaNn}, $d_{\mu_{N^{[k]}}} (\omega_{N^m})=0$, for all $m, k \in \Nn$.

We also have, from Proposition \ref{relations_struct} (i), that $(\pi,N)$ is a Poisson-Nijenhuis structure on $(A,\mu_{N^{[k]}})$. Thus, it is well known that $N^n \pi$ is a Poisson bivector on $(A,\mu_{N^{[k]}})$, for all $n,k \in \Nn$ \cite{magriYKS}. Moreover,
$$(N^n \pi)^\# \circ (\omega_{N^m})^\flat=N^n \circ \pi^\# \circ \omega^{\flat} \circ N^m=N^{n+m+1},$$
and, from (\ref{deformation_omega}), $(\omega_{N^m})_{N^{n+m+1}}=\omega_{N^{2m+n+1}}$. The Proposition \ref{closed_omegaNn} ensures that $\omega_{N^{2m+n+1}}$ is $\mu_{N^{[k]}}$-closed. Thus, $(N^n \pi, \omega_{N^m})$ is a $P\Omega$ structure on $(A,\mu_{N^{[k]}})$, for all $m, n, k \in \Nn$.
\end{proof}

In the next theorem we construct a hierarchy of $\Omega N$ structures.

\begin{thm}  \label{hierarchy_Omega_N}
Let $(A,\mu)$ be a Lie algebroid,  $\omega$ a $2$-form  and $N$ a $(1,1)$-tensor such that $(\omega, N)$ is an  $\Omega N$ structure on $(A,\mu)$. Then, $(\omega_{N^n},N^m)$ is an  $\Omega N$ structure on the Lie algebroid $(A,\mu_{N^{[k]}})$, for all $m, n, k \in \Nn$.
\end{thm}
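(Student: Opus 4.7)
The plan is to verify the four defining conditions of an $\Omega N$ structure for the pair $(\omega_{N^n}, N^m)$ on the Lie algebroid $(A, \mu_{N^{[k]}})$, namely (see (\ref{Omega_N})):
\begin{enumerate}
\item[(a)] $d_{\mu_{N^{[k]}}}(\omega_{N^n}) = 0$;
\item[(b)] $\mathcal{T}_{\mu_{N^{[k]}}} N^m = 0$;
\item[(c)] $(\omega_{N^n})^\flat \circ N^m = (N^m)^* \circ (\omega_{N^n})^\flat$;
\item[(d)] $d_{\mu_{N^{[k]}}}\bigl((\omega_{N^n})_{N^m}\bigr) = 0$.
\end{enumerate}
Each of these conditions is in fact an immediate consequence of a previously established statement in the paper, so the proof will amount to a bookkeeping argument with essentially no new computation.

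For (a), I would simply cite Proposition~\ref{closed_omegaNn}, which asserts exactly that $\omega_{N^n}$ is closed with respect to $\mu_{N^{[k]}}$ whenever $(\omega, N)$ is an $\Omega N$ structure on $(A,\mu)$. For (b), the Nijenhuis-ness of $N^m$ with respect to $\mu_{N^{[k]}}$ follows from Lemma~\ref{lema_YKS_N_k_hierarchy}: part (ii) identifies $\mu_{N^{[k]}}$ with $\mu_{N^k}$, and part (i) then gives that $N^m$ is Nijenhuis with respect to $\mu_{N^k}$. For (c), the assumption that $(\omega,N)$ is an $\Omega N$ structure gives $\omega^\flat \circ N = N^* \circ \omega^\flat$, so Lemma~\ref{lema_commut_omegaNn_Nk} applies and yields $(\omega_{N^n})^\flat \circ N^m = (N^m)^* \circ (\omega_{N^n})^\flat$.

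For (d), the key observation is the identity $(\omega_{N^n})_{N^m} = \omega_{N^{n+m}}$, which is the consequence of (\ref{deformation_omega}) (or equivalently of the supergeometric formula (\ref{omega_Nn_Nm_in_BB})) already noted just after Lemma~\ref{lema_commut_omegaNn_Nk}. Once this identity is in place, condition (d) reduces to showing that $\omega_{N^{n+m}}$ is closed with respect to $\mu_{N^{[k]}}$, which is again a direct instance of Proposition~\ref{closed_omegaNn} applied with exponent $n+m$ in place of $n$.

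There is no real obstacle here since all the hard work has been carried out in Proposition~\ref{closed_omegaNn} (whose proof is the only nontrivial inductive argument in the section) and in Lemmas~\ref{lema_YKS_N_k_hierarchy} and~\ref{lema_commut_omegaNn_Nk}; the theorem is essentially a packaging of these three results together with the elementary identity $(\omega_{N^n})_{N^m} = \omega_{N^{n+m}}$. If anything deserves a brief comment, it is only the check that the four $\Omega N$ conditions for $(\omega_{N^n}, N^m)$ on $(A,\mu_{N^{[k]}})$ match exactly the four statements provided by the preceding results, so that no additional hypothesis on $m$, $n$, or $k$ is needed.
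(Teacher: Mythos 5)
Your proposal is correct and follows essentially the same route as the paper's own proof: both reduce the statement to Proposition~\ref{closed_omegaNn} (for closedness of $\omega_{N^n}$ and of $(\omega_{N^n})_{N^m}=\omega_{N^{n+m}}$ via (\ref{deformation_omega})), Lemma~\ref{lema_commut_omegaNn_Nk} (for the compatibility $(\omega_{N^n})^\flat\circ N^m=(N^m)^*\circ(\omega_{N^n})^\flat$), and Lemma~\ref{lema_YKS_N_k_hierarchy} (for $N^m$ being Nijenhuis with respect to $\mu_{N^{[k]}}$). The only cosmetic difference is that the paper passes through Theorem~\ref{struct_mu_k}(ii) before citing Lemma~\ref{lema_YKS_N_k_hierarchy}, whereas you apply the lemma directly; this changes nothing of substance.
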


\begin{proof}
Let $(\omega, N)$ be an  $\Omega N$ structure on $(A,\mu)$. From Lemma \ref{lema_commut_omegaNn_Nk}, we have $(N^m)^* \circ (\omega_{N^n})^\flat = (\omega_{N^n})^\flat \circ N^m$, for all $m,n \in \Nn$.
From Theorem \ref{struct_mu_k} (ii),
 $(\omega, N)$ is an  $\Omega N$ structure on $(A,\mu_{N^{[k]}})$, for all $k \in \Nn$. The $(1,1)$-tensor $N$ being Nijenhuis on $(A,\mu_{N^{[k]}})$, $N^m$ is also Nijenhuis on $(A,\mu_{N^{[k]}})$, for all $m,k \in \Nn$ (see Lemma \ref{lema_YKS_N_k_hierarchy}). Moreover, by applying Proposition \ref{closed_omegaNn} and (\ref{deformation_omega}), the proof is complete.
\end{proof}

\begin{rem}
We can obtain a hierarchy of $\Omega N$ structures on  $(A,\mu_{N^{[k]}})$ combining Theorem \ref{hierarchy_POmega} and Proposition \ref{relations_struct}. However, the hierarchy constructed in this manner is less general than the one given by Theorem \ref{hierarchy_Omega_N}, not only because the $(1,1)$-tensor $N$ comes from an initial given $P \Omega$ structure on $(A,\mu)$, but also because the procedure consists in associating to each $P \Omega$ structure $(N^n \pi, \omega_{N^m})$, the $\Omega N$ structure
$(\omega_{N^m}, N^{m+n+1})$. Since $m+n+1 > m$, for all $m, n \in \Nn$, the hierarchy of $\Omega N$ structures obtained in this way does not contain terms of type $(\omega_{N^m}, N^{r})$, with $r \leq m$.
\end{rem}

The next theorem gives a hierarchy  of closed complementary forms and follows directly from Proposition \ref{relations_struct} (iii) and Theorem \ref{hierarchy_POmega}.

\begin{thm}  \label{hier_closed_compl_forms}
Let $(A,\mu)$ be a Lie algebroid, $\pi$ a Poisson bivector and $\omega$ a closed $2$-form  on $(A,\mu)$ which
is a complementary form of $\pi$. Set $N= \pi^\# \circ \omega^\flat$. Then, $(\omega_{N^n})_{n \in \Nn}$ is a hierarchy of closed complementary forms of $N^m \pi$ on the Lie algebroid $(A,\mu_{N^{[k]}})$, for all $m, k \in \Nn$.
\end{thm}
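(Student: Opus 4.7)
The plan is to derive this hierarchy of closed complementary forms as a direct consequence of the $P\Omega$ hierarchy (Theorem \ref{hierarchy_POmega}) together with the equivalence between $P\Omega$ structures and closed complementary forms (Proposition \ref{relations_struct}(iii)). The idea is that once we know $P\Omega$ structures propagate along the full two-parameter family of deformations by $N$, the complementary-form statement is obtained by applying the equivalence at both ends.

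More concretely, first I would invoke Proposition \ref{relations_struct}(iii) in one direction: since $\omega$ is closed and a complementary form of $\pi$ on $(A,\mu)$, the pair $(\pi,\omega)$ is a $P\Omega$ structure on $(A,\mu)$. Since $N=\pi^\# \circ \omega^\flat$ is exactly the $(1,1)$-tensor associated with this $P\Omega$ structure, Theorem \ref{hierarchy_POmega} applies and gives that $(N^m \pi, \omega_{N^n})$ is a $P\Omega$ structure on the Lie algebroid $(A,\mu_{N^{[k]}})$ for every $m,n,k \in \Nn$.

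Then I would apply Proposition \ref{relations_struct}(iii) in the other direction to each such $P\Omega$ structure: the $2$-form $\omega_{N^n}$ is closed on $(A,\mu_{N^{[k]}})$ and is a complementary form of the Poisson bivector $N^m \pi$ on $(A,\mu_{N^{[k]}})$, which is exactly the content of the theorem. Thus for each fixed $m,k \in \Nn$ the family $(\omega_{N^n})_{n\in\Nn}$ is a hierarchy of closed complementary forms of $N^m\pi$.

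There is essentially no obstacle: all the analytic work (the Poisson property of $N^m\pi$ on the deformed algebroids, the closedness of $\omega_{N^n}$ on $\mu_{N^{[k]}}$, and the compatibility $(N^m\pi)^\# \circ (\omega_{N^n})^\flat = N^{m+n+1}$) has already been carried out in the proof of Theorem \ref{hierarchy_POmega} via Proposition \ref{closed_omegaNn} and Lemma \ref{lema_YKS_N_k_hierarchy}, so the present statement is a packaging result. The only thing to verify, which is immediate from the definition of $N$ and formula \eqref{deformation_omega}, is that the tensor $N$ used to deform $\mu$ in the hierarchy coincides with the canonical $(1,1)$-tensor attached to $(\pi,\omega)$, so that Theorem \ref{hierarchy_POmega} can be applied without reinterpretation.
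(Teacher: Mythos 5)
Your argument is correct and is exactly the route the paper takes: the paper states that Theorem \ref{hier_closed_compl_forms} ``follows directly from Proposition \ref{relations_struct} (iii) and Theorem \ref{hierarchy_POmega}'', and your write-up simply spells out that two-way application of the equivalence between $P\Omega$ structures and closed complementary forms. No gaps.
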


The next result follows from Proposition~\ref{relations_struct} and Theorem~\ref{hierarchy_Omega_N}.
\begin{prop}
Let $(A,\mu)$ be a Lie algebroid, $\pi$, $\pi'$ bivectors  and $\omega$, $\omega'$  $2$-forms on $A$ such that
$\pi^{\#} \circ (\omega')^{\flat}=- (\pi')^{\#} \circ \omega^{\flat}$ and the pairs $(\pi, \omega)$, $(\pi, \omega')$, $(\pi', \omega)$ and $(\pi', \omega')$ are $P \Omega$
structures on $(A,\mu)$. Set
 $N=\pi^{\#} \circ \omega^{\flat}$, $\hat{N}= \pi^{\#} \circ (\omega')^{\flat}$ and $N'=(\pi')^{\#} \circ (\omega')^{\flat}$.
Then,  the pairs $(\omega_{I^n}, I^m)$ and $({\omega'}_{I^n}, I^m)$, with $I \in \{N, N' ,\widehat{N} \}$, are $\Omega N$ structures on $(A,\mu_{I^{[k]}})$, for all
$n,m,k \in \Nn$.
\end{prop}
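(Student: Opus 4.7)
The plan is to reduce each of the six asserted hierarchies to an underlying base $\Omega N$ structure on $(A,\mu)$, and then invoke Theorem~\ref{hierarchy_Omega_N} once for each. Concretely, I aim to show that both $(\omega,I)$ and $(\omega',I)$ are $\Omega N$ structures on $(A,\mu)$ for every $I\in\{N,N',\widehat N\}$; the hierarchy statement then follows at once from Theorem~\ref{hierarchy_Omega_N}, which upgrades a base $\Omega N$ structure to the whole family $(\omega_{I^n},I^m)$ on $(A,\mu_{I^{[k]}})$.

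The first four base pairs come directly from Proposition~\ref{relations_struct}(ii) applied to each of the four given $P\Omega$ structures: $(\pi,\omega)$ yields $(\omega,N)$; $(\pi',\omega')$ yields $(\omega',N')$; $(\pi,\omega')$ yields $(\omega',\widehat N)$; and $(\pi',\omega)$ yields $(\omega,-\widehat N)$. The last of these is equivalent to $(\omega,\widehat N)$ because the four $\Omega N$ conditions are invariant under $I\mapsto -I$ (the torsion is unchanged, the intertwining $\omega^\flat I=I^*\omega^\flat$ is preserved, and $\omega_{-I}=-\omega_I$ is closed iff $\omega_I$ is). So no sign-flipped Lie algebroid ever needs to be invoked.

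The remaining two cases, $(\omega',N)$ and $(\omega,N')$, are the only ones that genuinely require the coupling hypothesis $\pi^\#\circ(\omega')^\flat=-(\pi')^\#\circ\omega^\flat$ (equivalently, after transposition, $(\omega')^\flat\circ\pi^\#=-\omega^\flat\circ(\pi')^\#$, using $(\pi^\#)^*=-\pi^\#$ and $(\omega^\flat)^*=-\omega^\flat$). Combining this identity with the intertwining $\omega^\flat\widehat N=\widehat N^*\omega^\flat$ (from the already-established $(\omega,\widehat N)$ $\Omega N$ structure), I would derive the two key algebraic identities
\[
\omega'_N=\omega_{\widehat N},\qquad \omega_{N'}=-\omega'_{\widehat N},
\]
together with the compatibility relations $(\omega')^\flat\circ N=N^*\circ(\omega')^\flat$ and $\omega^\flat\circ N'=(N')^*\circ\omega^\flat$. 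The Nijenhuis property of $N$ and $N'$ is inherited from $(\pi,N)$ and $(\pi',N')$ being $PN$ structures (Proposition~\ref{relations_struct}(i)), and the closedness of $\omega_{\widehat N}$ and $\omega'_{\widehat N}$ is furnished by the previously established base $\Omega N$ structures. All four $\Omega N$ axioms for $(\omega',N)$ and $(\omega,N')$ are thus met.

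The main obstacle is precisely the verification of these cross-compatibility cases: the bookkeeping with transposes of $\pi^\#$, $(\pi')^\#$, $\omega^\flat$, $(\omega')^\flat$ is the only non-mechanical part of the argument, and a symmetry under the swap $(\pi,\omega)\leftrightarrow(\pi',\omega')$ (which preserves the coupling hypothesis and the set of four $P\Omega$ pairs) reduces the two cross cases to essentially one. Once Step~1 is complete, Step~2 is nothing more than six invocations of Theorem~\ref{hierarchy_Omega_N}.
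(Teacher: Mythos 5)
Your proof is correct and follows exactly the route the paper indicates: it states that the result "follows from Proposition~\ref{relations_struct} and Theorem~\ref{hierarchy_Omega_N}" without further detail, and your argument supplies precisely the missing verification — the six base $\Omega N$ structures via Proposition~\ref{relations_struct}(ii), the sign-invariance handling $(\omega,-\widehat N)$, and the coupling identity for the two cross cases $(\omega',N)$ and $(\omega,N')$ — before invoking Theorem~\ref{hierarchy_Omega_N}. The identities $\omega'_N=\omega_{\widehat N}$ and $\omega_{N'}=-\omega'_{\widehat N}$ check out, so no gap remains.
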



\section{Compatibility and hierarchies}
Now, we shall see that there exists a compatibility relation between the elements of each hierarchy constructed in the previous section.
Let $(\pi, N)$ be a Poisson-Nijenhuis structure on a Lie algebroid $(A, \mu)$. Then, it is known \cite{magriYKS} that, for every $k,n \in \Nn$,
the pair $(N^k \pi, N^n)$ is a Poisson-Nijenhuis structure on $(A, \mu)$. In the next proposition, we show that any two pairs of this type are compatible Poisson-Nijenhuis structures, in the sense of Definition \ref{compatible_struct}.

\begin{prop}  \label{PN_hier_compatible}
Let $(\pi, N)$ be a Poisson-Nijenhuis structure on a Lie algebroid $(A, \mu)$. Then, $(N^k \pi, N^n)$ and $(N^l \pi, N^m)$ are compatible Poisson-Nijenhuis
 structures on $(A, \mu_{N^{[r]}})$, for all $k,l,m,n,r \in \Nn$.
\end{prop}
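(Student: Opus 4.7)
The plan is to set $\mu' = \mu_{N^{[r]}}$ and work throughout in the Lie algebroid $(A,\mu')$, reducing the problem to a routine verification of the four conditions in the compatibility criterion for $PN$ structures established earlier. By Theorem~\ref{struct_mu_k}(i), $(\pi,N)$ is still a $PN$ structure on $(A,\mu')$, and by the Magri--Kosmann-Schwarzbach hierarchy result \cite{magriYKS}, every pair of the form $(N^a\pi, N^b)$ with $a,b \in \Nn$ is a $PN$ structure on $(A,\mu')$. I would apply this observation not only to the given pairs $(N^k\pi, N^n)$ and $(N^l\pi, N^m)$, but also to the "cross" pairs $(N^k\pi, N^m)$ and $(N^l\pi, N^n)$, which are the ones that appear in the compatibility conditions.

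With these facts in place, I would dispatch the four compatibility conditions one by one. The condition $\mathcal{N}_{\mu'}(N^n, N^m)=0$ amounts to the statement that $N^n+N^m$ is Nijenhuis on $(A,\mu')$, which follows from the standard fact that powers of a Nijenhuis tensor have pairwise vanishing Nijenhuis concomitants. The condition $\mathcal{N}_{\mu'}(J_{N^k\pi}, J_{N^l\pi})=0$ is equivalent to $[N^k\pi, N^l\pi]_{\mu'}=0$, which is the well-known mutual Poisson-compatibility of the bivectors in the bi-Hamiltonian hierarchy of a $PN$ structure. The anti-commutator condition $[J_{N^k\pi}, J_{N^m}]_+ + [J_{N^l\pi}, J_{N^n}]_+ = 0$ is obtained term-by-term: each summand vanishes separately because $(N^k\pi, N^m)$ and $(N^l\pi, N^n)$ are themselves $PN$ structures, so $N^m \circ (N^k\pi)^\# = (N^k\pi)^\# \circ (N^m)^*$, and likewise for the other pair.

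For the mixed concomitant condition $\mathcal{N}_{\mu'}(J_{N^k\pi}, J_{N^m}) + \mathcal{N}_{\mu'}(J_{N^l\pi}, J_{N^n}) = 0$, I would use the anti-commutation just verified together with identity~(\ref{N_concomitant}) to rewrite each concomitant as $\tfrac{1}{2}C_{\mu'}$, and then invoke $C_{\mu'}(N^k\pi, N^m)=0$ and $C_{\mu'}(N^l\pi, N^n)=0$, each of which holds because the corresponding pair is a $PN$ structure on $(A,\mu')$. In fact each of the two concomitants vanishes separately, which is even stronger than needed.

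The main potential obstacle is a conceptual rather than technical one: one must recognize that the Magri--YKS hierarchy delivers more than just a one-parameter family of $PN$ structures of the form $(N^a\pi, N^a)$; it gives a $PN$ structure for every independent choice of two indices $(a,b)$. Once this is acknowledged, the compatibility of any two members $(N^k\pi, N^n)$ and $(N^l\pi, N^m)$ becomes an immediate consequence of applying the four-condition criterion of the earlier proposition and using bilinearity of $\mathcal{N}_{\mu'}$. The same argument works verbatim with $\mu'$ replaced by $\mu$, which justifies writing the statement for all $r\in\Nn$ at once.
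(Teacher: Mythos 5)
Your proposal is correct and follows essentially the same route as the paper: reduce to $r=0$ via Theorem~\ref{struct_mu_k}(i), then verify the four compatibility conditions using the Magri--Kosmann-Schwarzbach hierarchy (every $(N^a\pi,N^b)$ is a $PN$ structure, the bivectors $N^k\pi$ are pairwise compatible, $\mathcal{N}_\mu(N^n,N^m)=0$, and $C_\mu(N^i\pi,N^j)=0$ for all $i,j$). The only cosmetic difference is that you route through the four-condition criterion of Section~7 while the paper checks the definition of compatibility directly; the underlying facts invoked are identical.
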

\begin{proof}
We start proving the result for $r=0$.
From \cite{magriYKS} we know that the Poisson bivectors $N^k \pi$ and $N^l \pi$ are compatible, which is equivalent to saying that $N^k \pi + N^l \pi$ is a Poisson bivector.
Also, it is obvious that $(N^n + N^m)(N^k \pi + N^l \pi)^\# = (N^k \pi + N^l \pi)^\# (N^n + N^m)^*$.

In \cite{anutuneslaurentndc} we proved that, if $N$ is a Nijenhuis tensor, then $[X,Y]_{N^n, N^m}= [X,Y]_{N^n \circ N^m}$, for all $X,Y \in \Gamma(A)$.
Thus, from (\ref{Nijenhuisconcomitant2}), we get ${\mathcal{N}}_\mu( N^n, N^m)=0$ and, using (\ref{torsion_sum}), we have
 that $N^n + N^m$ is a Nijenhuis tensor.

Finally, since $C_\mu(N^i \pi,N^j)=0$, for every $i,j \in \Nn$, and  $C_\mu(N^k \pi + N^l \pi, N^n + N^m)$ is a sum of terms of type $C_\mu(N^i \pi,N^j)$,
we obtain $C_\mu(N^k \pi + N^l \pi, N^n + N^m)=0$, which concludes the proof for $r=0$.

Now, from Theorem~\ref{struct_mu_k} (i), if $(\pi, N)$ is a PN structure on $(A, \mu)$ then it is a PN structure on $(A, \mu_{N^{[r]}})$.
Thus, the above proof can be repeated using the Lie algebroid structure $\mu_{N^{[r]}}$ instead of $\mu$.
\end{proof}

Starting with an $\Omega N$ structure $(\omega, N)$ on a Lie algebroid $(A, \mu)$, we constructed, in Theorem \ref{hierarchy_Omega_N}, a family of $\Omega N$ structures on $(A, \mu)$. Next, we prove that any two elements of that family are pairwise compatible.

\begin{prop} \label{ON_hier_compatible}
Let $(\omega, N)$ be an $\Omega N$ structure on a Lie algebroid $(A, \mu)$. Then, $(\omega_{N^k}, N^n)$ and $(\omega_{N^l}, N^m)$ are compatible
$\Omega N$ structures on $(A, \mu_{N^{[r]}})$, for all $k,l,m,n,r \in \Nn$.
\end{prop}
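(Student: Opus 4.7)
The plan is to verify the three compatibility conditions supplied by Proposition~\ref{compatible_sum_omegaN}. By Theorem~\ref{hierarchy_Omega_N}, both $(\omega_{N^k}, N^n)$ and $(\omega_{N^l}, N^m)$ are already $\Omega N$ structures on $(A,\mu_{N^{[r]}})$, so the task reduces to establishing: (a) $\mathcal{N}_{\mu_{N^{[r]}}}(N^n, N^m)=0$; (b) $\mathcal{N}_{\mu_{N^{[r]}}}(J_{\omega_{N^k}}, J_{N^m})+\mathcal{N}_{\mu_{N^{[r]}}}(J_{\omega_{N^l}}, J_{N^n})=0$; and (c) $[J_{\omega_{N^k}}, J_{N^m}]_{+}+[J_{\omega_{N^l}}, J_{N^n}]_{+}=0$.

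Conditions (a) and (c) are essentially free. For (a), the identity $[X,Y]_{N^n, N^m}=[X,Y]_{N^n\circ N^m}$ for a Nijenhuis tensor (used in the proof of Proposition~\ref{PN_hier_compatible}), together with (\ref{Nijenhuisconcomitant2}), forces the concomitant to vanish; the Nijenhuis property for $N$ transfers from $\mu$ to $\mu_{N^{[r]}}$ by Lemma~\ref{lema_YKS_N_k_hierarchy}. For (c), Lemma~\ref{lema_commut_omegaNn_Nk} says $J_{\omega_{N^k}}$ and $J_{N^m}$ already anti-commute, and similarly for $J_{\omega_{N^l}}$ and $J_{N^n}$; hence both anti-commutators vanish individually.

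The content of the argument is (b), and the strategy is to show that each of the two summands vanishes separately. Since $J_{\omega_{N^k}}$ and $J_{N^m}$ anti-commute, formula (\ref{N_concomitant}) reduces $\mathcal{N}_{\mu_{N^{[r]}}}(J_{\omega_{N^k}}, J_{N^m})$ to $\tfrac{1}{2}\,C_{\mu_{N^{[r]}}}(J_{\omega_{N^k}}, J_{N^m})$. Expanding exactly as in the opening computation of the proof of Proposition~\ref{omegaN} (using Jacobi for the big bracket and the $\mu_{N^{[r]}}$-closedness of $\omega_{N^k}$ supplied by Proposition~\ref{closed_omegaNn} to kill the term $\{N^m,\{\omega_{N^k},\mu_{N^{[r]}}\}\}$), this concomitant equals $2\{\mu_{N^{[r]}}, (\omega_{N^k})_{N^m}\}$, which by (\ref{omega_Nn_Nm_in_BB}) is $2\{\mu_{N^{[r]}}, \omega_{N^{k+m}}\}$. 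A second application of Proposition~\ref{closed_omegaNn} gives that $\omega_{N^{k+m}}$ is $\mu_{N^{[r]}}$-closed, and the expression vanishes. The same argument disposes of the second summand.

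There is no genuine obstacle; the only point requiring care is the double invocation of Proposition~\ref{closed_omegaNn} inside (b), once to discard the unwanted term in the expansion of $C_{\mu_{N^{[r]}}}$ and once to kill the surviving $\{\mu_{N^{[r]}}, \omega_{N^{k+m}}\}$. Everything else is bookkeeping in the big bracket.
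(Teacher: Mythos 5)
Your proof is correct and rests on the same ingredients as the paper's: Lemma \ref{lema_commut_omegaNn_Nk} for the anti-commutation, the identity $[X,Y]_{N^n,N^m}=[X,Y]_{N^n\circ N^m}$ from the proof of Proposition \ref{PN_hier_compatible} for the Nijenhuis property of $N^n+N^m$, and Proposition \ref{closed_omegaNn} for the closedness of the forms $\omega_{N^i}$. The only (immaterial) difference is packaging: you route the verification through the three conditions of Proposition \ref{compatible_sum_omegaN} and work directly over $\mu_{N^{[r]}}$, whereas the paper checks the defining conditions of an $\Omega N$ structure for the sum pair $(\omega_{N^k}+\omega_{N^l},\,N^n+N^m)$ at $r=0$ — in particular decomposing $(\omega_{N^k}+\omega_{N^l})_{N^n+N^m}$ into closed terms of type $\omega_{N^i}$, which is exactly the computation hiding inside your step (b).
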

\begin{proof}
As in the proof of Proposition~\ref{PN_hier_compatible}, we prove the statement for $r=0$ and then the result follows from Theorem~\ref{struct_mu_k} (ii).
The $2$-form $\omega_{N^k} + \omega_{N^l}$ is obviously closed. The equality
$(\omega_{N^k} + \omega_{N^l})^\flat \circ (N^n + N^m)= (N^n + N^m)^* \circ (\omega_{N^k} + \omega_{N^l})^\flat$ holds (Lemma \ref{lema_commut_omegaNn_Nk}) and $N^n + N^m$ is Nijenhuis (see the proof of Proposition \ref{PN_hier_compatible}).
It remains to prove that the $2$-form $(\omega_{N^k} + \omega_{N^l})_{(N^n + N^m)}$ is closed. From Proposition \ref{closed_omegaNn}, we know that any $2$-form of type $\omega_{N^{i}}$, $i \in \Nn$, is closed. Since $(\omega_{N^k} + \omega_{N^l})_{(N^n + N^m)}$ can be decomposed into a sum of terms of type $\omega_{N^{i}}$, we obtain that $(\omega_{N^k} + \omega_{N^l})_{(N^n + N^m)}$ is closed.
\end{proof}

For $P \Omega$ structures we obtain the following result.

\begin{prop} \label{compatible_hier_POmega}
Let $(\pi, \omega)$ be a $P \Omega$ structure on a Lie algebroid $(A, \mu)$. Set $N= \pi^{\#} \circ \omega ^{\flat}$. Then, $(N^n \pi, \omega_{N^m})$
and $(N^l \pi, \omega_{N^k})$ are compatible $P \Omega$ structures on $(A, \mu_{N^{[r]}})$, for all $k,l,m,n,r \in \Nn$.
\end{prop}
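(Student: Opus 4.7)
The plan is to verify the three defining conditions of a $P\Omega$ structure for the sum $(N^n\pi + N^l\pi,\,\omega_{N^m}+\omega_{N^k})$ on $(A,\mu_{N^{[r]}})$. Following the pattern of Propositions~\ref{PN_hier_compatible} and \ref{ON_hier_compatible}, I would first invoke Theorem~\ref{struct_mu_k}(iii) to reduce to working inside a fixed $(A,\mu_{N^{[r]}})$: since $(\pi,\omega)$ is $P\Omega$ on $(A,\mu)$, it is $P\Omega$ on $(A,\mu_{N^{[r]}})$ for every $r$, and all auxiliary facts I will need (Propositions~\ref{relations_struct}, \ref{PN_hier_compatible}, \ref{closed_omegaNn}, and Theorem~\ref{hierarchy_POmega}) transport to this setting.

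For the Poisson axiom, Proposition~\ref{PN_hier_compatible} applied to the $PN$ structure $(\pi,N)$ furnished by Proposition~\ref{relations_struct}(i) gives at once that $N^n\pi$ and $N^l\pi$ are compatible Poisson bivectors on $(A,\mu_{N^{[r]}})$, i.e.\ $N^n\pi+N^l\pi$ is Poisson. Closedness of $\omega_{N^m}+\omega_{N^k}$ with respect to $d_{\mu_{N^{[r]}}}$ is immediate from Proposition~\ref{closed_omegaNn}, which ensures $d_{\mu_{N^{[r]}}}\omega_{N^i}=0$ for every $i\in\Nn$. The substantive remaining axiom asks that
\[
\bigl(\omega_{N^m}+\omega_{N^k}\bigr)_{\widetilde{N}}, \qquad \widetilde{N}:=(N^n\pi+N^l\pi)^{\#}\circ(\omega_{N^m}+\omega_{N^k})^{\flat},
\]
be $d_{\mu_{N^{[r]}}}$-closed. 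Expanding and using $\pi^{\#}\circ\omega^{\flat}=N$ together with Lemma~\ref{lema_commut_omegaNn_Nk}, one finds $\widetilde{N}=N^{n+m+1}+N^{n+k+1}+N^{l+m+1}+N^{l+k+1}$.

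Finally, by the identity $(\omega_{N^a})_{N^b}=\omega_{N^{a+b}}$ (a direct consequence of~(\ref{deformation_omega})) together with the bilinearity of the deformation operation $(\omega,T)\mapsto \omega_T$, the $2$-form $(\omega_{N^m}+\omega_{N^k})_{\widetilde{N}}$ distributes into a finite linear combination of terms of the form $\omega_{N^i}$; each is $d_{\mu_{N^{[r]}}}$-closed by Proposition~\ref{closed_omegaNn}, so the sum is closed and the last $P\Omega$ condition holds. The only real obstacle is the bookkeeping in this final step — confirming that every summand of $(\omega_{N^m}+\omega_{N^k})_{\widetilde{N}}$ really is a deformation of $\omega$ by a power of $N$. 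Once that is in place the result reduces cleanly to the known hierarchy statements (Propositions~\ref{PN_hier_compatible} and \ref{closed_omegaNn}) without any genuinely new computation.
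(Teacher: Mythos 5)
Your proposal is correct and follows essentially the same route as the paper: reduce to a fixed $\mu_{N^{[r]}}$ via Theorem~\ref{struct_mu_k}(iii), get the Poisson condition from Proposition~\ref{relations_struct}(i) and Proposition~\ref{PN_hier_compatible}, and check the remaining closedness conditions by decomposing everything into forms of type $\omega_{N^i}$, which are closed by Proposition~\ref{closed_omegaNn}. Your explicit computation $\widetilde{N}=N^{n+m+1}+N^{n+k+1}+N^{l+m+1}+N^{l+k+1}$ just spells out the bookkeeping the paper leaves implicit.
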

\begin{proof}
As in the proof of Propositions \ref{PN_hier_compatible} and \ref{ON_hier_compatible}, we prove the statement for $r=0$ and then the result follows
from Theorem~\ref{struct_mu_k} (iii).
From Proposition \ref{relations_struct} (i), we know that if $(\pi, \omega)$ is a $P \Omega$ structure,
then $(\pi, N)$ is a $PN$ structure. The Proposition \ref{PN_hier_compatible} yields  that $N^n \pi + N^l \pi$ is a Poisson bivector.

The $2$-form $\omega_{N^m} + \omega_{N^k}$ is obviously closed. Combining Proposition \ref{relations_struct} (ii) and Proposition \ref{closed_omegaNn}, we have that each $2$-form of type $\omega_{N^{i}}$, $i \in \Nn$, is closed. Since $(\omega_{N^m} + \omega_{N^k})_{(N^n \pi + N^l \pi)^{\#} (\omega_{N^m} + \omega_{N^k})}$ can be decomposed into a sum of terms of type $\omega_{N^{i}}$, we obtain that $(\omega_{N^m} + \omega_{N^k})_{(N^n \pi + N^l \pi)^{\#} (\omega_{N^m} + \omega_{N^k})}$ is closed and the proof is complete.
\end{proof}

The next proposition shows that the elements of the hierarchy established in Theorem \ref{hier_closed_compl_forms} are pairwise compatible.
\begin{prop}
Let $(A, \mu)$ be a Lie algebroid, $\pi$ a Poisson bivector and $\omega$ be a closed complementary form of $\pi$. Set $N= \pi^\# \circ \omega^\flat$.
Then, $\omega_{N^n}$ and $\omega_{N^m}$ are compatible closed complementary forms of $N^k \pi$, for all $n,m, k \in \Nn$.
\end{prop}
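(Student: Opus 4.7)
The plan is to deduce this statement as a direct corollary of Proposition \ref{compatible_hier_POmega}, combined with Proposition \ref{relations_struct}(iii), which translates between $P\Omega$ structures and closed complementary forms of Poisson bivectors.

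First, by Proposition \ref{relations_struct}(iii), the hypothesis that $\omega$ is a closed complementary form of $\pi$ is equivalent to $(\pi,\omega)$ being a $P\Omega$ structure on $(A,\mu)$, so the setup of Proposition \ref{compatible_hier_POmega} is in force. Theorem \ref{hier_closed_compl_forms} (specialized with its hierarchy parameter set to $0$) already tells us that each $\omega_{N^n}$ is on its own a closed complementary form of $N^k\pi$ on $(A,\mu)$, so the only nontrivial thing to verify is compatibility, i.e.\ that $\omega_{N^n}+\omega_{N^m}$ is again a closed complementary form of $N^k\pi$.

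Next, I would invoke Proposition \ref{compatible_hier_POmega} with the indices chosen so that the two bivector components coincide: taking both of its $\pi$-indices equal to the present $k$ and its deformation index $r=0$, the proposition yields that $(N^k\pi,\omega_{N^n})$ and $(N^k\pi,\omega_{N^m})$ are compatible $P\Omega$ structures on $(A,\mu)$. By Definition \ref{compatible_struct}, this means that $(2N^k\pi,\;\omega_{N^n}+\omega_{N^m})$ is itself a $P\Omega$ structure on $(A,\mu)$, and applying Proposition \ref{relations_struct}(iii) in the reverse direction then gives that $\omega_{N^n}+\omega_{N^m}$ is a closed complementary form of $2N^k\pi$.

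The only loose end is the harmless rescaling of the Poisson bivector. Since the big bracket is linear, $\mu_{2N^k\pi}=2\mu_{N^k\pi}$, and hence the defining condition $[\eta,\eta]_{\mu_\sigma}=\{\eta,\{\eta,\mu_\sigma\}\}=0$ for $\eta$ being a complementary form of $\sigma$ is insensitive to rescaling $\sigma$ by a nonzero scalar; therefore $\omega_{N^n}+\omega_{N^m}$ is equally a closed complementary form of $N^k\pi$, which is exactly the compatibility we wanted. I do not expect any real obstacle: the substantive content has been front-loaded into Proposition \ref{compatible_hier_POmega}, and the present statement is essentially its translation through Proposition \ref{relations_struct}(iii) modulo the trivial scalar reduction $2N^k\pi\leadsto N^k\pi$.
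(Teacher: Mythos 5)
Your proof is correct, but it follows a genuinely different route from the one in the paper. You obtain the statement as a formal corollary of Proposition \ref{compatible_hier_POmega} (specialized to equal bivector exponents and deformation index $r=0$) combined with the dictionary of Proposition \ref{relations_struct}(iii), and you correctly identify and dispose of the one subtlety in this reduction: the sum of the two $P\Omega$ structures has bivector component $2N^k\pi$ rather than $N^k\pi$, which is harmless since $\mu_{2\sigma}=2\mu_{\sigma}$ and the defining condition $[\eta,\eta]_{\mu_{\sigma}}=0$ is insensitive to rescaling $\sigma$ by a nonzero constant. The paper argues directly instead: by Proposition \ref{compatible_complementary} it suffices to show that $\mathcal{N}_{\mu_{N^{k}\pi}}(J_{\omega_{N^n}},J_{\omega_{N^m}})=0$, i.e.\ that $\{\omega_{N^n},\{\omega_{N^m},\{N^k\pi,\mu\}\}\}=0$, and this is checked in two lines with the Jacobi identity of the big bracket, using the closedness of $\omega_{N^m}$ to write $\{\omega_{N^m},\{N^k\pi,\mu\}\}=\{\{\omega_{N^m},N^k\pi\},\mu\}=\{N^{k+m+1},\mu\}$ and then Proposition \ref{closed_omegaNn} to conclude. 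Your reduction is shorter given the earlier results and requires no computation, at the price of routing through the $P\Omega$ formalism and the scalar normalization; the paper's argument is self-contained, stays within the complementary-form framework, and exhibits the useful identity $\{\omega_{N^m},N^k\pi\}=N^{k+m+1}$ explicitly. There is no circularity in your appeal to Proposition \ref{compatible_hier_POmega}, since its proof does not depend on the present statement.
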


\begin{proof}
According to Proposition \ref{compatible_complementary}, we have to prove that ${\mathcal{N}}_{\mu_{N^{k} \pi}} (J_{\omega_{N^n}},J_{\omega_{N^m}})=0$ or,
 equivalently, $\{ \omega_{N^n}, \{ \omega_{N^m} , \{ N^k \pi, \mu \} \} \}=0$.

From Proposition \ref{relations_struct} (ii), the pair $(\omega,N)$ is an $\Omega N$ structure. Applying Proposition \ref{closed_omegaNn} and
the Jacobi identity, we get
 \begin{align*}
\{ \omega_{N^n}, \{ \omega_{N^m} , \{ N^k \pi, \mu \} \} \}& =\{ \omega_{N^n}, \{ \{ \omega_{N^m} , N^k \pi \}, \mu \}\}\\
&=
\{ \omega_{N^n},\{ N^{k+m+1}, \mu \} \}=0,
\end{align*}
which completes the proof.
\end{proof}

\bigskip

\noindent {\bf Acknowledgments.} The authors wish to thank Camille Laurent-Gengoux for comments and
suggestions on a preliminary version of this manuscript. This work was partially supported by CMUC and FCT (Portugal), through
European program COMPETE/FEDER and by FCT grant PTDC/MAT/099880/2008.



\begin{thebibliography}{99}

\bibitem{antunes1} P. Antunes,  Poisson quasi-Nijenhuis structures with
background, Lett. Math. Phys. 86 (2008) 33-45.

\bibitem{antunes2} P. Antunes, Crochets de Poisson gradu\'es et applications: structures
compatibles et g\'en\'eralisations des structures hyperk\"{a}hl\'eriennes, Th\`ese
de doctorat de l'\'Ecole Polytechnique, March 2010.

\bibitem{anutuneslaurentndc} P. Antunes, C. Laurent-Gengoux, J. M. Nunes da Costa, Hierarchies and compatibility on Courant algebroids, Pacific J. Math., to appear, arXiv:1111.0800.

\bibitem{antunesndc} P. Antunes, J. M. Nunes da Costa,
From hypersymplectic structures to compatible pairs of tensors on a Lie algebroid, preprint.

\bibitem{bcg} H. Bursztyn, G. Cavalcanti, M. Gualtieri,  Generalized K\"{a}hler and hyper-K\"{a}hler quotients. {\em Poisson geometry in mathematics and physics}, Contemp. Math., 450, Amer. Math. Soc., Providence, RI, 2008, pp. 61--77.

\bibitem{cndcdn} R. Caseiro, A. De Nicola, J. M. Nunes da Costa, On Poisson quasi-Nijenhuis Lie algebroids, arXiv:0806.2467.

\bibitem{crainic} M. Crainic,  Generalized complex structures and Lie brackets, Bull. Braz. Math. Soc. (N.S.) 42 (2011), no. 4, 559--578.


\bibitem{grab} J. Grabowski, Courant-Nijenhuis tensors and generalized geometries,
in Groups, geometry and physics, Monogr. Real Acad. Ci. Exact. F\'{\i}s.-
Qu\'{\i}m. Nat. Zaragoza, 29, 2006, pp. 101--112.



\bibitem{YKS05} Y. Kosmann-Schwarzbach, Quasi, twisted, and all that ... in Poisson geometry and Lie algebroid theory, in {\em The Breadth of symplectic and Poisson geometry}, J.E. Marsden and T. Ratiu, eds., Progr. Math., 232, Birkh\"{a}user, Boston, MA, 2005, pp. 363--389.

\bibitem{YKS11} Y. Kosmann-Schwarzbach, Nijenhuis structures on Courant algebroids,  Bull. Brazilian Math. Soc., 42 (2011), no. 4, 625-649.

\bibitem{magriYKS} Y. Kosmann-Schwarzbach, F. Magri, Poisson-Nijenhuis structures,
Ann. Inst. H. Poincar\'e, Phys. Th\'eor. 53 (1990), no. 1, 35--81.

\bibitem{YKSRubtsov} Y. Kosmann-Schwarzbach, V. Rubtsov, Compatible structures on Lie algebroids and Monge-Amp\`ere operators, Acta  Appl. Math., 109 (2010), no. 1, 101--135.


\bibitem{magrimorosi} F. Magri, C. Morosi,  A geometrical characterization of integrable Hamiltonian systems through the theory of Poisson-Nijenhuis manifolds, Quaderno S 19, Univ. of Milan, 1984.

\bibitem{nijenhuis} A. Nijenhuis, $X_{n-1}$-forming sets of eigenvectors,  Indag. Math. 13, (1951), 200--212.

\bibitem{roy} D. Roytenberg, Quasi-Lie bialgebroids and twisted Poisson manifolds,
Lett. Math. Phys. 61 (2002), no. 2, 123--137.

\bibitem{stienon} M. Sti\'{e}non, Hypercomplex structures on Courant algebroids, C. R. Acad. Sci. Paris 347 (2009), 545--550.

\bibitem{stienonxu} M. Sti\'{e}non and P. Xu, Poisson Quasi-Nijenhuis Manifolds, Comm. Math. Phys., 270 (2007)
709-725.

\bibitem{vaisman} I. Vaisman, Complementary $2$-forms of Poisson structures, Compositio Mathematica 101 (1996) 55--75.



\bibitem{voronov} T. Voronov, Graded manifolds and Drinfeld doubles for Lie bialgebroids, in {\em Quantization, Poisson brackets and beyond}, T. Voronov, ed., Contemp. Math. 315, Amer. Math. Soc., Providence, RI, 2002, pp. 131--168.

\end{thebibliography}
\end{document}